\let\oldmarginpar\marginpar
\renewcommand\marginpar[1]
\newcommand{\la}{\langle}
\newcommand{\ra}{\rangle}
\newtheorem{theorem}{\bf Theorem}[section]
\newtheorem{lemma}[theorem]{\bf Lemma}
\newtheorem{remark}[theorem]{\bf Remark}
\newtheorem{prop}[theorem]{\bf Proposition}
\newtheorem{corollary}[theorem]{\bf Corollary}
\newtheorem{question}[theorem]{\bf Question}
\newcommand{\CC}{{\Bbb C}}
\newcommand{\NN}{{\Bbb N}}
\newcommand{\QQ}{{\Bbb Q}}
\newcommand{\RR}{{\Bbb R}}
\newcommand{\ZZ}{{\Bbb Z}}
\newcommand{\plie}{{\frak p}}
\newcommand{\ggreat}{>\kern-.7ex>}
\newcommand{\ssmall}{<\kern-.7ex<}
\newcommand{\qu}{/\kern-.7ex/}
\newcommand{\exh}{\to\kern-1.8ex\to}
\newcommand{\dD}{{\EuScript{D}}}
\newcommand{\fF}{{\EuScript{F}}}
\newcommand{\gG}{{\EuScript{G}}}
\newcommand{\mM}{{\EuScript{M}}}
\newcommand{\oO}{{\EuScript{O}}}
\newcommand{\pP}{{\EuScript{P}}}
\newcommand{\sS}{{\EuScript{S}}}
\newcommand{\GL}{\operatorname{GL}}
\newcommand{\Aff}{\operatorname{Aff}}
\newcommand{\Aut}{\operatorname{Aut}}
\newcommand{\Bir}{\operatorname{Bir}}
\newcommand{\Coker}{\operatorname{Coker}}
\newcommand{\Hom}{\operatorname{Hom}}
\newcommand{\Homeo}{\operatorname{Homeo}}
\newcommand{\Id}{\operatorname{Id}}
\newcommand{\Ker}{\operatorname{Ker}}
\newcommand{\SL}{\operatorname{SL}}
\newcommand{\rk}{\operatorname{rk}}
\newcommand{\Spec}{\operatorname{Spec}}
\newcommand{\Stab}{\operatorname{Stab}}
\newcommand{\wt}{\widetilde}
\newcommand{\ov}{\overline}
\newcommand{\discsym}{\operatorname{disc-sym}}
\newcommand{\absym}{\operatorname{tor-sym}}
\newcommand{\torsym}{\operatorname{tor-sym}}
\title[Discrete degree of symmetry of manifolds]
{Discrete degree of symmetry of manifolds}
\email{ignasi.mundet@ub.edu}
\date{\today}
\subjclass[2010]{57S17,54H15}
\thanks{This research was partially supported by the grant
PID2019-104047GB-I00 from the Spanish Ministeri de Ci\`encia i Innovaci\'o.}
\author[I. Mundet i Riera]{Ignasi Mundet i Riera}
\address{ignasi.mundet@ub.edu \\ Departament de Matem\`atiques i Inform\`atica, Universitat de Barcelona, Spain}
\begin{document}

\maketitle

%\begin{abstract}
%REWRITE!!!
%Let $X$ be a closed, connected and oriented topological $n$-dimensional manifold.
%Suppose that:
%\begin{enumerate}
%\item $X$ supports
%an effective action of $(\ZZ/r)^n$ for arbitrarily large values of $r$,
%\item the fundamental group $\pi_1(X)$
%is virtually solvable, and
%\item there exists a map $X\to T^n$ of nonzero degree.
%\end{enumerate}
%We then prove that $X$ is homeomorphic to $T^n$.
%Assuming only (1) and (3) we prove that the cohomology ring
%$H^*(X;\ZZ)$ is isomorphic to $H^*(T^n;\ZZ)$.
%We also prove that any closed connected and oriented $n$-dimensional
%manifold $X$ admitting a map to $T^n$ of nonzero degree has Jordan homeomorphism group, and
%we study for such manifolds the maximal number $k$ such that $X$
%admits effective actions of $(\ZZ/r)^k$ for arbitrarily large $r$. Finally, we prove that if $X$
%is a compact and connected Kaehler manifold of real dimension $n$ and $X$ supports effective holomorphic actions
%of $(\ZZ/r)^m$ for arbitrarily large $r$, then $m\leq n$, and if $m=n$ then $X$ is biholomorphic to
%a complex torus.
%\end{abstract}

\begin{abstract}
We define the discrete degree of symmetry $\discsym(X)$
of a closed $n$-manifold $X$ as the biggest $m\geq 0$
such that $X$ supports an effective action of $(\ZZ/r)^m$ for arbitrarily big values of $r$. We prove
that if $X$ is connected then $\discsym(X)\leq 3n/2$. We propose the question of
whether for every closed connected $n$-manifold $X$ the inequality $\discsym(X)\leq n$
holds true, and whether the only closed connected $n$-manifold $X$ for which $\discsym(X)=n$ is the torus $T^n$.
We prove partial results providing evidence for an affirmative answer to this question.
\end{abstract}

\section{Introduction}

Let $X$ be a closed topological manifold
and let $\mu(X)$ be the set of natural numbers
$m$ for which there exists an effective action\footnote{In this paper all group actions on manifolds are implicitly
assumed to be continuous.} of $(\ZZ/r)^m$ on $X$
arbitrarily large values of $r$. In other words, $m\in\mu(X)$ means that there
exists a sequence of integers $r_i\to\infty$ and an effective action of $(\ZZ/r_i)^m$
on $X$ for each $i$. By the theorem of Mann and Su
\cite{MS} the set $\mu(X)$ is finite. Define the {\it discrete degree of symmetry} of $X$ to be
the number
$$\discsym(X):=\max (\{0\}\cup\mu(X)).$$

The discrete degree of symmetry can be equivalently defined taking into
account all finite abelian groups that act effectively on a given manifold, and not
only those of the form $(\ZZ/r)^m$. Indeed, we will prove in
Lemma \ref{lemma:discsym-rank-groups} that
$\discsym(X)$ %the discrete degree of symmetry of $X$
coincides with the
smallest nonnegative integer $k$ for which there exists a constant $C$ such that
any finite abelian group $A$ acting effectively on $X$ has a subgroup $A'$ satisfying
$[A:A']\leq C$ and which can be generated by $k$ or fewer elements.

%In this paper we prove several results on the discrete degree of symmetry of closed connected
%manifolds. We will see that, while in general $\aA(X)$ is very difficult
%to describe, $\discsym(X)$ is, at least in some cases, much more amenable to computations.
%Furthermore, if $X$ is connected then $\discsym(X)$ is bounded by a linear function of
%$\dim X$ (see Theorem ******; we actually conjecture that
%$\discsym(X)\leq \dim X$), while one cannot bound the rank of the elements in $\aA(X)$
%in terms of the dimension. For example, for any integer $d\geq 2$ and any finite abelian group
%$G$ there exist a closed connected manifold $X$ of dimension $d$ such that $[G]\in\aA(X)$.

%In this paper we prove that if $X$ is a rationally hypertoral $n$-dimensional manifold (see below) then
%$\discsym(X)\leq n$, and if $\discsym(X)=n$ then $H^*(X;\ZZ)\simeq H^*(T^n;\ZZ)$ as rings. The latter
%can be strengthened if $\pi_1(X)$ is virtually solvable: in this case, $\discsym(X)=n$ implies that
%$X\cong T^n$.

%Based on the previous results, we ask whether the inequality $\discsym(X)\leq\dim X$
%holds for every closed connected manifold $X$, and whether tori are the only closed connected manifolds
%for which $\discsym(X)=\dim X$. Besides our results on rationally hypertoral manifolds, we provide
%several additional arguments, some of them heuristic and others rigorous, in favor of a positive answer to these expectations.

One can regard the discrete degree of symmetry as an analogue for finite groups of the
{\it (topological) torus-degree of symmetry \cite[p. 132]{Hsiang}}, which has been extensively studied in
the literature (see e.g. \cite{BS,Hsiang,KK,LeeRaymond2,Schultz-2}). For a closed manifold $X$,
this is defined as the maximal $n$ for which the torus $T^n$ admits a continuous effective action on $X$ (by convention $T^0=\{1\}$).
We denote it by $\torsym(X)$. The torus degree of symmetry
is also called in some references the {\it toral rank}, see e.g. \cite[\S 11.8.1]{LeeRaymond2},
although in some other references the expression {\it toral rank} is reserved for free actions.

It is well known that if $X$ is a connected topological $n$-manifold then $\torsym(X)\leq n$,
with equality if and only if $X$ is homeomorphic to $T^n$ (see Subsection \ref{ss:proof-thm:Kaehler}). Since $(\ZZ/r)^m$ is isomorphic to
the $r$-torsion $T^m[r]<T^m$ and, for any sequence $r_i\to\infty$, the union $\bigcup_i T^m[r_i]$ is dense
in $T^m$, it seems natural to expect that a closed connected manifold $X$ satisfying $\discsym(X)=m$
should look somehow as if it supported an effective action of $T^m$. This heuristic can be turned
into an actual theorem in some situations (see e.g. Theorems \ref{thm:Kaehler} and \ref{thm:Lie-groups}), but it has its limitations:
while trivially $\discsym(X)\geq\torsym(X)$ for any closed manifold
$X$, there are examples of closed connected manifolds manifolds for which the inequality is strict, as we will
see below (see Theorem \ref{thm:CWY}).
Nevertheless, it may still be true that the bound $\torsym\leq\dim$ is also satisfied by $\discsym$.
We thus ask the following.

\begin{question}
\label{quest:bound-disc-sym}
Is the inequality $\discsym(X)\leq \dim X$ true for every closed connected manifold $X$?
If a closed connected manifold $X$ satisfies $\discsym(X)=\dim X$, is $X$ homeomorphic to a torus?
\end{question}

If one removes the condition that $X$ is connected then there is no hope to bound
$\discsym(X)$ by a function on the dimension of $X$. For example, the disjoint
union of $k$ copies of the circle supports an effective action of $T^k$, where
the action on the $j$-th circle is given by the projection to the
$j$-th factor $T^k=(S^1)^k\to S^1$.

If one considers only free actions of $(\ZZ/r)^m$ on connected manifolds, then the first part of Question \ref{quest:bound-disc-sym} follows from a theorem of Baumgartner and Carlsson
\cite[Theorem 1.4.14]{AP} (see Theorem \ref{thm:Carlsson-Baumgartner}) and a lemma of
Minkowski (see Theorem \ref{thm:Minkowski}).

In this paper we prove several results partially answering Question \ref{quest:bound-disc-sym}
in the affirmative, as well as other results related to the discrete degree of symmetry.
More evidence in favour of Question \ref{quest:bound-disc-sym} is provided
in \cite{Mundet2022}.

%We trivially have $\discsym(X)\geq \torsym(X)$, and in this paper we prove that
%for the manifolds constructed by Cappell, Weinberger and Yan \cite{CWY}
%the previous inequality is strict.

One may consider variants of the discrete degree of symmetry by considering only actions
of $(\ZZ/p)^m$ for $p$ prime, by considering only free actions, or by combining both
restrictions. Some of these variations have been studied in the literature, see e.g.
\cite{Hanke}.

An interesting class of closed manifolds with nonzero discrete degree of symmetry are
closed strongly regular self-covering manifolds \cite{vL,QSW}, i.e.,
closed manifolds which are homeomorphic to a nontrivial (necessarily finite) regular covering of themselves and which satisfy the additional
property that each iterated covering is regular.
%This observation is one of the ingredients in
%the proof of Theorem \ref{thm:CWY}.
%There is a rich literature on self-covering manifolds,
%see e.g. \cite{QSW,vL} and the references therein.
There are interesting relations
between some of the results in the present paper about rationally hypertoral manifolds
and some results in \cite{QSW} (see below).

\subsection{A bound on the discrete degree of symmetry}
Our first result is the following.

\begin{theorem}
\label{thm:weak-bound-disc-sym}
For any closed connected $n$-manifold $X$ we have $\discsym(X)\leq 3n/2$.
\end{theorem}

Quantitatively, the previous theorem stays far from the bound suggested by
Question \ref{quest:bound-disc-sym}, but it shows a qualitative difference
between the discrete degree of symmetry and the rank of individual groups
acting effectively on a given manifold.
%Indeed, assuming a
%group of the form $(\ZZ/r)^m$ acts effectively on an $n$-dimensional closed
%connected manifold, there is no way to bound $m$ above by a function of $n$.
The theorem of Mann and Su \cite{MS} mentioned earlier implies that if $X$ is
a closed connected manifold and $(\ZZ/r)^m$ acts effectively on $X$ then
$m$ is bounded by a function involving the Betti numbers of $X$. Although
the bound given in \cite{MS} could possibly be improved, there is no hope to
replace it by a constand depending only on the dimension of $X$,
because any finite group acts freely (hence effectively)
on some closed connected surface.

\subsection{Discrete degree of symmetry of rationally hypertoral manifolds}
A closed, connected and oriented $n$-dimensional manifold $X$ is said to be {\it rationally hypertoral}
if it admits a continuous map $\phi:X\to T^n$ of nonzero degree. If the map $\phi$ can be chosen
of degree $\pm 1$, then $X$ is said called {\it hypertoral} in \cite{Schultz-1,Schultz-2}.
Equivalently, $X$ is rationally hypertoral if it admits classes $\alpha_1,\dots,\alpha_n\in H^1(X;\ZZ)$
such that $\alpha_1\smile\dots\smile\alpha_n\neq 0$, because $T^n=(S^1)^n$
and $S^1$ is an Eilenberg-MacLane space $K(\ZZ,1)$. Similarly, $X$ is hypertoral
if it admits classes $\alpha_1,\dots,\alpha_n\in H^1(X;\ZZ)$ such that
$\alpha_1\smile\dots\smile\alpha_n$ is a generator of $H^n(X;\ZZ)$. For example, if $X$ is any
closed, connected and orientable $n$-manifold then the connected sum $T^n\sharp X$ is a hypertoral
manifolds. Similarly, if $X$ is a closed complex submanifold of $\CC^n/\Lambda$, where $\Lambda$ is a lattice, then $X$ is rationally hypertoral (see \cite[p. 243]{Yau}). See Theorem \ref{thm:non-hypertoral} for examples of non hypertoral rationally
hypertoral manifolds.

\begin{theorem}
\label{thm:discsym-rationally-hypertoral}
Let $X$ be a rationally hypertoral $n$-manifold. We have:
\begin{enumerate}
\item $\discsym(X)\leq n$;
\item if $\discsym(X)=n$ then the universal abelian cover of $X$ is acyclic and there is an isomorphism of rings $H^*(X;\ZZ)\simeq H^*(T^n;\ZZ)$.
\end{enumerate}
\end{theorem}

We recall some standard terminology for the reader's convenience.
A covering space $Y\to X$ is abelian if it is regular and its group of deck transformations
is abelian. Choose a base point in $X$, let $\pi=\pi_1(X)$ and let $X'$ be the universal cover
space of $X$. A connected cover $f:Y\to X$ is abelian if $f_*\pi_1(Y)$ contains $[\pi,\pi]$.
The universal abelian cover of $X$ can be identifed with $X'/[\pi,\pi]$.
A connected abelian cover $Y\to X$ is isomorphic to the universal abelian cover of $X$
if and only if $H_1(Y)=0$.

%Statement (2) in Theorem \ref{thm:discsym-rationally-hypertoral}
%implies that if a rationally hypertoral manifold
%$X$ satisfies $\discsym(X)=\dim X$ then $X$ is actually hypertoral.

The converse of (2) in Theorem \ref{thm:discsym-rationally-hypertoral}
is not true. For example, if $X$ is the connected sum of $T^3$ and Poincar\'e's sphere,
then $H^*(X;\ZZ)\simeq H^*(T^3;\ZZ)$ and the universal abelian cover of $X$ is acyclic.
However, $\discsym(X)=0$. Indeed, $\discsym(X)>0$ would imply that $X$ supports a circle action,
by \cite{Pardon2019} and the geometrization of $3$-manifolds (see e.g. the arguments
in \cite[\S 2]{Zimmermann2014}). But $X$ does not support any circle
action, by \cite[Theorem 5.1]{Schultz-2}.

Statement (1) in Theorem \ref{thm:discsym-rationally-hypertoral} follows from a somewhat routine
extension to continuous actions of the construction described in \cite[\S 2.1]{M1} and
\cite[\S 8.1]{MundetSaez}. The proof of (2)
is based on the following result on commutative algebra (see Corollary
\ref{cor:finitely-generated-plus}), which is perhaps of independent interest.

\begin{theorem}
\label{thm:finite-generation}
Let $M$ be a finitely generated module over $A:=\ZZ[t_1^{\pm 1},\dots,t_n^{\pm 1}]$.
Suppose that for every $1\leq i\leq n$ there exists a nonzero integer $d_i$,
a sequence of integers $(r_{i,j})_j$ satisfying $r_{i,j}\to\infty$ as $j\to\infty$,
and $A$-module automorphisms $w_{i,j}:M\to M$ such that $w_{i,j}^{r_{i,j}}$ coincides with multiplication
by $t_i^{d_i}$. Then $M$ is finitely generated as a $\ZZ$-module.
\end{theorem}

A similar theorem was proved independently, with a rather different proof, by
L. Qin, Y. Su and B. Wang in \cite[Theorem G]{QSW} (both the first version of \cite{QSW} and
that of the present paper
were posted almost simultaneously in the arxiv). One can actually derive Theorem \ref{thm:finite-generation} 
from \cite[Theorem G]{QSW}\footnote{I thank L. Qin, Y. Su and B. Wang for explaining this to me.}. Since this derivation
is nontrivial and our proof of Theorem \ref{thm:finite-generation} is elementary,
not longer, and different from that in \cite[Theorem G]{QSW}, it is perhaps
worthwhile to keep it in this revised version.

Theorem \ref{thm:finite-generation} is also used in the proof of the following result.

\begin{theorem}
\label{thm:bounding-discsym}
Let $X$ be a rationally hypertoral manifold.
Suppose that $X$ is homeomorphic to $(Y\sharp Y')\times Z$,
where $Y,Y',Z$ are closed connected topological manifolds satisfying
$\dim Y=\dim Y'>1$. If neither $Y$ nor $Y'$ are
integral homology spheres, then $\discsym(X)\leq \dim Z$.
\end{theorem}

An analogue of the previous theorem for $\torsym$ instead of $\discsym$,
and for the case where both $Y$ and $Z$ are tori, was proved by Schultz
in \cite[Theorem 5.1]{Schultz-2} (Schulz assumes that $Y'$ is not a homotopy
sphere, which is slightly weaker than our assumption).
%More precisely, Schultz proves in
%[op. cit.] that if $Z$ is a closed connected $l$-dimensional topological manifold
%then $\absym(T^{k}\times (T^{l}\sharp Z))\leq k$
%unless $Z$ is a homotopy sphere.
%The previous theorem
%is a strengthening of Schultz's result, except that our hypothesis that neither $Z$ nor $Z'$ are
%integral homology spheres is stronger than the assumption that neither of them is a homotopy %sphere.

Using the topological rigidity of tori, we deduce from (2) in Theorem
\ref{thm:discsym-rationally-hypertoral} the following.

\begin{corollary}
\label{cor:homeomorphic-torus}
Let $X$ be a rationally hypertoral $n$-manifold such that $\pi_1(X)$ is virtually solvable.
If $\discsym(X)=n$ then $X$ is homeomorphic to $T^n$.
\end{corollary}

Topological rigidity of tori is the statement that if $X$ is a
closed connected manifold then any homotopy equivalence $X\to T^n$ is homotopic to a homeomorphism.
If $n\leq 2$ this is a consequence of the classification of compact connected manifolds
of dimensions at most $2$. It was proved for $n\geq 5$ by Hsiang and Wall \cite{HW},
for $n=4$ by Freedman \cite[\S 11.5]{FQ} (see also \cite{BKKPR}), and in dimension
$n=3$ it is a consequence of Thurston's geometrisation conjecture proved by Perelman
(see \cite{BBMBP,MorganTian} for proofs of the geometrisation conjecture, and \cite[\S 5]{KL} for the proof
that geometrisation implies topological rigidity of $T^3$).
Topological rigidity of tori is
a particular case of Borel's conjecture (see e.g. \cite[\S 3]{L} for a survey, \cite{Farrell, FJ} for
the case of Riemannian manifolds with nonpositive curvature, and also the recent textbook \cite{CW}).

The following result complements Corollary \ref{cor:homeomorphic-torus} in the smooth category.

\begin{theorem}
\label{thm:main-smooth} Let $n\neq 4$ be a natural number.
Let $X$ be a smooth manifold homeomorphic to $T^n$. Then:
\begin{enumerate}
\item $X$ supports effective smooth actions of $(\ZZ/r)^n$ for
arbitrarily large values of $r$;
\item there exists a number $\delta(n)$ (depending on $n$ but not on $X$)
such that if $X$ supports an effective smooth action of $(\ZZ/m\delta(n))^n$ for
some nonzero integer $m$ then $X$ is diffeomorphic to $T^n$.
\end{enumerate}
\end{theorem}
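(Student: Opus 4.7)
The case $n \leq 3$ is trivial in both parts: every smooth manifold homeomorphic to $T^n$ is then diffeomorphic to $T^n$ by classical low-dimensional results, and the standard coordinate rotations furnish actions of $(\ZZ/r)^n$ for every $r$. I assume $n \geq 5$ below; the exclusion of $n=4$ reflects the unresolved state of smoothing theory on $T^4$.

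For part (1) I would invoke the Hsiang--Wall theorem and the surrounding Kirby--Siebenmann smoothing theory: for $n \geq 5$ the smoothings of $T^n$ up to diffeomorphism form a finite abelian group $\mathcal{S}(T^n)$, and the pullback of any smoothing along the standard $k^n$-sheeted cover $T^n \to T^n$ becomes trivial once $k$ is sufficiently divisible. Applied to $X$, this produces an integer $r_0 = r_0(X)$ and a finite cover $\wt X \to X$, corresponding to $r_0\ZZ^n \subset \ZZ^n = \pi_1(X)$, with $\wt X$ diffeomorphic to the standard $T^n$. The deck group $(\ZZ/r_0)^n$ acts freely and smoothly on $\wt X$ with quotient $X$; since $\pi_1(X)$ is abelian, the induced outer action on $\pi_1(\wt X)$ is trivial, and by equivariant smoothing theory combined with Bieberbach's classification, the deck action is smoothly conjugate to a translation action on the standard $T^n$. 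Then for every $s$, the translation action of $(\ZZ/sr_0)^n$ on $\wt X$ contains the deck group as a subgroup and descends to an effective action of the quotient $(\ZZ/s)^n$ on $X$.

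For part (2) I would choose $\delta(n)$ large enough that, for any $k$ divisible by $\delta(n)$, the kernel of any homomorphism $(\ZZ/k)^n \to \Out(\pi_1(X)) \subset \operatorname{GL}(n,\ZZ)$ contains a subgroup $H \cong (\ZZ/m')^n$ with $m' \cdot \mathcal{S}(T^n) = 0$. The first requirement uses the Minkowski bound on orders of finite subgroups of $\operatorname{GL}(n,\ZZ)$, and the second uses the finiteness of $\mathcal{S}(T^n)$. Given an effective smooth action of $(\ZZ/k)^n$ on $X$ with $k = m\delta(n)$, the restriction to $H$ acts by smooth homeomorphisms homotopic to the identity, hence lifts to the universal cover commuting with the $\pi_1(X)$-action. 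A rigidity statement identifies this lifted action, up to smooth conjugacy, with a group of translations of $\RR^n$; in particular the $H$-action on $X$ is free, and the quotient $Y = X/H$ is a smooth closed manifold with $\pi_1(Y) = \ZZ^n$, hence homeomorphic to $T^n$ by topological rigidity. The cover $X \to Y$ is topologically the standard $(m')^n$-fold cover, so the smoothing class of $X$ is the pullback of that of $Y \in \mathcal{S}(T^n)$ under this cover; this pullback vanishes by $m' \cdot \mathcal{S}(T^n) = 0$. Hence $X$ is diffeomorphic to $T^n$.

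The principal technical obstacles are two. First, the rigidity statement that a finite smooth action on a torus-like manifold homotopic to the identity is smoothly conjugate to a translation action: topologically this goes back to work of Lee and Raymond, but the smooth refinement requires equivariant smoothing theory on aspherical manifolds, and in particular must handle the exotic smooth structure of $X$. Second, the pullback statement $k^c \cdot \mathcal{S}(T^n) = 0$ for suitable $c$ must be made effective, which depends on the explicit computation of the endomorphism of $[T^n, \operatorname{Top}/O]$ induced by the $k$-fold cover through Kirby--Siebenmann theory and the cohomology $H^*(T^n;\pi_*(\operatorname{Top}/O))$. The passage from topological to smooth conjugacy of the deck action in part (1) is a parallel difficulty that can be absorbed by applying Hsiang--Wall to a further iterated cover on which both the deck group and a large translation action are realized in the standard smoothing.
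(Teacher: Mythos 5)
There are genuine gaps in both parts, and in part (1) an actual error. Your part (1) hinges on the claim that the deck action of $(\ZZ/r_0)^n$ on the standard cover $\wt X\cong T^n$ is smoothly conjugate to a translation action. If that were true, the quotient of the standard torus by a group of translations would again be the standard torus, i.e.\ $X$ itself would be diffeomorphic to $T^n$ --- contradicting the existence of exotic smooth structures on $T^n$ for $n\geq 5$, which is precisely the case the theorem must cover. Since your production of $(\ZZ/s)^n$-actions on $X$ goes by embedding the deck group into a larger translation group of the standard smooth torus and descending, the mechanism collapses exactly when $X$ is exotic. The paper's route is different and avoids this: writing $c(h:X\to T^n)$ for Wall's characteristic class in the finite group $A_n$ and choosing $r\equiv 1\pmod{|A_n|}$, the pullback of $X$ under the covering $\pi_r(x_1,\dots,x_n)=(rx_1,\dots,rx_n)$ has the same characteristic class as $X$ and is therefore diffeomorphic to $X$ itself (not to the standard torus); the deck group $(\ZZ/r)^n$ of that covering is then transported to a free smooth action on $X$. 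No linearization of any group action is needed.

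In part (2) the two crucial facts --- that the restricted action of $H\cong(\ZZ/m')^n$ is free and that $X/H$ is a closed manifold homotopy equivalent to $T^n$ --- rest only on an unproved ``rigidity statement'' (smooth conjugacy of the lifted action to translations of $\RR^n$), which you yourself flag as the main obstacle. As formulated it is both unavailable and essentially circular: a simultaneous smooth linearization of the $H$-action and of the $\ZZ^n$-action would already exhibit $X$ as a standard torus, i.e.\ it presupposes the conclusion. What is actually needed is much weaker and is what the paper supplies with Theorem \ref{thm:linearising-action} (built from Lemma \ref{lemma:equivariant-map-to-circle} and the degree bound of Lemma \ref{lemma:degree-quotient}): after passing to the subgroup acting trivially on $H^1(X;\ZZ)$, the homeomorphism $h:X\to T^n$ is homotopic to an $\eta$-equivariant map with $\eta$ injective because $\deg h=\pm 1$, which forces the action to be free and yields the descended map $X/(\ZZ/s)^n\to T^n/\eta((\ZZ/s)^n)$; topological rigidity then gives the homeomorphism which lifts to $\theta:X\to T^n$, and divisibility of $s$ by $|A_n|$ kills the Wall class. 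Your endgame (the smoothing class of $X$ is the pullback of that of $Y$ under an $s$-fold cover and hence vanishes) agrees with the paper, but the freeness/equivariance input it depends on is missing; also the arithmetic producing $H\cong(\ZZ/m')^n$ inside the kernel of the action on $H^1$ with the required divisibility needs a statement like Lemma \ref{lemma:subgroup-isomorphic-to-Gamma-a-b}, which you assume implicitly.
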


Statement (2) above is related to many results
in the literature showing that homeomorphic but not diffeomorphic manifolds
need not support smooth effective actions of the same finite or compact groups
(see e.g. \cite{HH} and the references therein for systematic results on the case of
the spheres and \cite{BT,BKKT} for analogous questions on tori).

Combining Corollary \ref{cor:homeomorphic-torus} and Theorem \ref{thm:main-smooth} we obtain:

\begin{corollary}
Let $n\neq 4$ be a natural number. Let $X$ be a closed, connected and oriented $n$-dimensional
rationally hypertoral smooth manifold. If $X$
supports an effective action of $(\ZZ/r)^n$ for every natural number $r$
and $\pi_1(X)$ is virtually solvable then $X$ is diffeomorphic to $T^n$.
\end{corollary}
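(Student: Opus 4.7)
The plan is to chain the two theorems above. The hypotheses of the corollary are exactly those of Theorem~\ref{thm:main-thm}(2)---reading the effective action of $(\ZZ/r)^n$ in the continuous sense, as per the footnote at the start of this subsection---so the first step is to invoke that theorem in order to conclude that $X$ is homeomorphic to $T^n$.

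Once $X$ is known to be a smooth manifold homeomorphic to $T^n$ with $n\neq 4$, the rest of the argument is extracted from Theorem~\ref{thm:main-smooth}. Part (1) of that theorem guarantees that $X$ carries smooth effective actions of $(\ZZ/r)^n$ for arbitrarily large $r$; in particular one can pick such an $r$ of the form $m\delta(n)$ for some nonzero integer $m$, where $\delta(n)$ is the constant from part (2). Feeding this smooth action into Theorem~\ref{thm:main-smooth}(2) then produces a diffeomorphism $X\cong T^n$, as required.

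The expected main difficulty is not in the corollary itself---its proof is a two-line bookkeeping argument---but rather in the category change from topological to smooth. Theorem~\ref{thm:main-thm} delivers only a homeomorphism $X\approx T^n$, and one cannot feed the given (a priori merely continuous) actions of $(\ZZ/r)^n$ directly into Theorem~\ref{thm:main-smooth}(2), which demands a smooth action on $X$. What rescues the argument is Theorem~\ref{thm:main-smooth}(1), which produces smooth actions on any smooth structure on the topological $n$-torus for free; it is this auxiliary input, already packaged inside the smooth theorem, that bridges the gap and allows one to apply part (2).
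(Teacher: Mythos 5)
Your first step---invoking Theorem~\ref{thm:main-thm}(2) to get a homeomorphism $X\cong T^n$---is correct and is also how the paper begins. The second step contains a genuine gap. Theorem~\ref{thm:main-smooth}(1) asserts smooth actions of $(\ZZ/r)^n$ for \emph{arbitrarily large} $r$, i.e.\ for some sequence $r_i\to\infty$, not for every sufficiently large $r$; its proof only produces actions for $r\equiv 1\pmod{|A_n|}$, and since $\delta(n)$ is divisible by $|A_n|$ (which is nontrivial for $n\geq 5$), no multiple of $\delta(n)$ ever occurs among these $r$. So you cannot ``pick such an $r$ of the form $m\delta(n)$'' and feed it into Theorem~\ref{thm:main-smooth}(2). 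The gap is fatal rather than repairable: your steps after the homeomorphism use nothing but the homeomorphism type of $X$, so if they worked they would prove that \emph{every} smooth manifold homeomorphic to $T^n$ (with $n\neq 4$) is diffeomorphic to $T^n$. That is false for $n\geq 5$: exotic tori exist, e.g.\ $T^n\sharp\Sigma$ with $\Sigma$ an exotic sphere, which the paper itself points out is homeomorphic to $T^n$ yet has smooth $\absym$ equal to $0$, hence is not diffeomorphic to $T^n$; it would also render the constant $\delta(n)$ in Theorem~\ref{thm:main-smooth}(2) pointless.

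The same observation shows that your literal ``continuous'' reading of the hypothesis cannot be the intended one: purely continuous hypotheses are homeomorphism-invariant and are satisfied by every exotic torus (transport the standard actions through the homeomorphism), so they can never force a diffeomorphism. In the corollary the hypothesized actions on the smooth manifold $X$ are to be taken as smooth, and the requirement ``for every natural number $r$'' (rather than ``arbitrarily large $r$'') is there precisely so that $r=\delta(n)$ is available. The intended proof is then immediate: Theorem~\ref{thm:main-thm}(2) gives that $X$ is homeomorphic to $T^n$, and applying Theorem~\ref{thm:main-smooth}(2) to the given effective smooth action of $(\ZZ/\delta(n))^n$ yields that $X$ is diffeomorphic to $T^n$. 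Theorem~\ref{thm:main-smooth}(1) plays no role in the corollary.
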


\subsection{Holomorphic discrete degree of symmetry of compact Kaehler manifolds}
The following result answers affirmatively the analogue of Question \ref{quest:bound-disc-sym}
for holomorphic actions on Kaehler manifolds.

\begin{theorem}
\label{thm:Kaehler}
Let $X$ be a compact connected Kaehler manifold of real dimension $n$. Suppose that, for some
natural number $m$, $X$ supports an effective holomorphic action of $(\ZZ/r)^m$
for arbitrarily large values of $r$. Then $X$ supports an effective holomorphic action of
$T^m$. Furthermore, $m\leq n$, and if $m=n$ then $X$ is biholomorphic to a complex torus.
\end{theorem}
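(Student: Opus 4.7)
The plan is to upgrade the given sequence of finite abelian actions to a holomorphic effective action of the real torus $T^m$ inside $\Aut^0(X)$, and then deduce the remaining claims from standard transformation group theory. The background tools are the Bochner--Montgomery theorem, which makes $\Aut(X)$ a complex Lie group, and the Fujiki--Lieberman structure theorem for compact K\"ahler manifolds, which provides a short exact sequence of complex Lie groups
\begin{equation*}
1\longrightarrow L\longrightarrow \Aut^0(X)\longrightarrow T_X\longrightarrow 1,
\end{equation*}
in which $L$ is a connected complex linear algebraic group and $T_X\subset\operatorname{Alb}(X)$ is a complex subtorus obtained from the induced action on the Albanese variety.

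First I would reduce to actions inside $\Aut^0(X)$. By Lieberman's theorem the kernel of the map $\Aut(X)/\Aut^0(X)\to\Aut H^2(X;\ZZ)$ is finite, and every finite subgroup of $\Aut H^2(X;\ZZ)\hookrightarrow\GL_N(\ZZ)$ has order bounded in terms of $N=\rk H^2(X;\ZZ)$ by Minkowski's theorem. Hence there exists a constant $C=C(X)$ such that $[G:G\cap\Aut^0(X)]\leq C$ for every finite $G\subset\Aut(X)$. Applied to $G=(\ZZ/r)^m$, and restricting to $r$ coprime to $C$ (and to any other bounded constants appearing below), this produces subgroups $(\ZZ/r)^m\subset\Aut^0(X)$ for arbitrarily large $r$.

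Next, let $a_r$ denote the rank of the image of $(\ZZ/r)^m$ in $T_X$ and $b_r$ the rank of $(\ZZ/r)^m\cap L$. Rank subadditivity in short exact sequences of finitely generated abelian groups gives $a_r+b_r\geq m$. The image in $T_X$ lies in its torsion subgroup $(\QQ/\ZZ)^{2\dim_\CC T_X}$, so $a_r\leq 2\dim_\CC T_X$. On the other hand, every finite subgroup of a complex linear algebraic group is, up to conjugation, contained in a reductive Levi subgroup, inside which commuting finite-order elements are simultaneously diagonalizable; therefore, for $r$ coprime to bounded constants arising from component groups, $(\ZZ/r)^m\cap L$ lies in a maximal algebraic torus of the Levi of $L$, whose complex dimension is $\rk_\CC L$, yielding $b_r\leq\rk_\CC L$. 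A maximal compact subgroup $K\subset\Aut^0(X)$ fits into $1\to K_L\to K\to T_X\to 1$ with $K_L$ a maximal compact of $L$, so the standard rank formula gives
\begin{equation*}
\rk K=\rk K_L+\dim T_X=\rk_\CC L+2\dim_\CC T_X\geq b_r+a_r\geq m.
\end{equation*}
Consequently $K$ contains a real torus $T^m$, which through $K\subset\Aut(X)$ acts holomorphically and effectively on $X$.

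Finally, an effective action of $T^m$ on a connected $n$-manifold has trivial principal isotropy: the principal isotropy $H$ must act trivially on the slice of any principal orbit by the slice theorem and the openness of the principal stratum, hence trivially on a dense open subset of $X$ and so on all of $X$, forcing $H=1$ by effectiveness. Therefore the principal orbit has dimension $m$, so $m\leq n$. If $m=n$, the principal orbit is open in $X$ and by connectedness coincides with $X$; thus $X$ is $T^n$-equivariantly homeomorphic to $T^n$ with the translation action, and since the action is holomorphic the complex structure on $X$ is $T^n$-invariant and descends from a translation-invariant (automatically integrable) complex structure on the Lie algebra $\RR^n$ of $T^n$, exhibiting $X$ as a complex torus $\CC^{n/2}/\Lambda$. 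The main obstacle is the rank bound in the third paragraph, where one must manage the non-connectedness and the unipotent radical of $L$ and confirm that for $r$ coprime to suitable constants the subgroup $(\ZZ/r)^m\cap L$ indeed lies in a common maximal torus of the reductive Levi of $L^0$.
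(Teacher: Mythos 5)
Your endgame is fine: once an effective holomorphic $T^m$-action is produced, your slice-theorem argument for $m\leq n$ and, in the case $m=n$, the identification of $X$ with an orbit and the observation that the invariant complex structure is a constant (hence integrable) structure on $\RR^n/\Lambda$ are correct (the latter even avoids the reference the paper uses for the complex-torus step; the paper instead runs a Baire-category/invariance-of-domain argument and cites Baues--Cort\'es). The genuine gap is in the central step, the bound $b_r\leq\rk_{\CC}L$. It rests on the claim that the finite abelian group $(\ZZ/r)^m\cap L$ lies in a maximal torus of the reductive Levi, and the justification offered does not give this: simultaneous diagonalizability of commuting finite-order elements only places the group in a maximal torus of an ambient $\GL(N,\CC)$, yielding the useless bound $b_r\leq N$. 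Indeed a finite abelian subgroup of a connected reductive group need not be toral: the group of diagonal sign matrices of determinant one in $\SO(3,\CC)$ is $(\ZZ/2)^2$, of rank $2$, while $\SO(3,\CC)$ has rank $1$. The coprimality escape you propose is not available either: the hypothesis supplies only \emph{some} sequence $r_i\to\infty$, whose prime factors you cannot prescribe (they could all be powers of $2$, with $2$ a torsion prime of the Levi), so Borel--Serre-type toral containment under coprimality cannot be arranged. The same defect already appears in your first reduction, where ``restricting to $r$ coprime to $C$'' to force $(\ZZ/r)^m\subset\Aut^0(X)$ is not an option; there the correct move is to pass to the index-$\leq C$ subgroup and extract from it a subgroup isomorphic to $(\ZZ/s)^m$ with $C!\,s\geq r$, as in Lemma \ref{lemma:subgroup-isomorphic-to-Gamma-a-b}.

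The statement you need is true, but its proof is a bounded-index (Jordan-type) assertion rather than a coprimality one: in a Lie group with finitely many connected components every finite subgroup has a subgroup of index bounded by a constant of the group which is contained in a torus; combined with Lemma \ref{lemma:subgroup-isomorphic-to-Gamma-a-b} this recovers $(\ZZ/s)^m$ inside a torus with $s$ arbitrarily large, hence a torus of rank at least $m$. This is exactly Theorem \ref{thm:Lie-groups} of the paper, proved there by an elementary argument with an $\Ad$-invariant norm, the exponential map, and a finite cover of the group by translates of a small neighbourhood, applied after the same Fujiki--Minkowski reduction you perform (the paper works in $\Aut_{\ov{\omega}}X$, which has finitely many components by Fujiki). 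With that theorem in hand the whole Fujiki--Lieberman exact sequence, the Levi decomposition, and the rank bookkeeping become unnecessary; if you prefer to keep your route, you must replace both coprimality steps by such bounded-index reductions, since as written the argument does not close.
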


We will prove Theorem \ref{thm:Kaehler} using a result of Fujiki \cite{Fuj} on automorphism groups
of compact Kaehler manifolds and the following result on Lie groups.

\begin{theorem}
\label{thm:Lie-groups}
Let $G$ be a (finite dimensional) Lie group with finitely many connected components.
For every natural number $n$ the following properties are equivalent:
\begin{enumerate}
\item $G$ has a Lie subgroup isomorphic to $T^n$,
\item $G$ has a subgroup isomorphic to $(\ZZ/r)^n$ for arbitrary large integers $r$.
\end{enumerate}
\end{theorem}

%The previous theorem does not extend to the infinite dimensional setting.
%Indeed, if we take $G=\Diff (X)$ for some manifold $X$ then the equivalence
%of statements (1) and (2) of the theorem applied to this $G$ is the same
%thing as the equality of the smooth analogues of $\discsym(X)$ and $\absym(X)$,
%which as we explained previously is false in general.

Theorem \ref{thm:Kaehler} seems to be new even for smooth projective varieties
over the complex numbers.
%It is natural to wonder whether the result is also true
%for varieties defined over other fields of characteristic zero.
One can also ask the analogous question for birational transformation groups.
Namely, if $X$ is an $n$-dimensional variety defined over the complex numbers
(or more generally a field of characteristic zero)
and the birational transformation group $\Bir(X)$ contains subgroups isomorphic to
$(\ZZ/r)^m$ for arbitrarily large values of $r$, does it follow that $m\leq 2n$?
If $m=2n$, does it follow that $X$ is birational to an abelian variety?
\footnote{After this paper was finished, these questions have been answered in the
affirmative by A. Golota, see \cite{Gol}.}

A partial result on the first question, due to Prokhorov and Shramov, appears in
\cite[Theorem 1.10]{PS}. An analogue of the second question for rationally connected
varieties has been recently proved by Xu \cite[Theorem 1.3]{Xu}: namely,
if $X$ is a rationally connected $n$-dimensional variety and $\Bir(X)$ contains
subgroups isomorphic to $(\ZZ/p)^n$ for sufficiently big primes $p$ then $X$
is rational.

\subsection{Discrete degree of symmetry and torus-degree of symmmetry}
Corollary \ref{cor:homeomorphic-torus}
proves that, at least in some particular situations, if
$\discsym(X)=\dim X$ then $\absym(X)=\discsym(X)$.
%Maybe the equality $\absym(X)=\discsym(X)$ holds true for {\it every}
%closed connected manifold $X$ satisfying $\discsym(X)=\dim X$, but in any case
But there are examples of closed manifolds for which $\absym<\discsym$,
as proved by a construction due to Cappell, Weinberger and Yan.

\begin{theorem}
\label{thm:CWY}
Let $X$ be any of the manifolds $T(h)\times H$ constructed in \cite[\S 2]{CWY}.
We have $\discsym(X)\geq 1$ and $\absym(X)=0$.
\end{theorem}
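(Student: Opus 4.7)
The proof splits into the two inequalities.

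For $\discsym(X)\geq 1$: In the construction of [CWY, \S 2], $T(h)$ is the mapping torus of a self-homeomorphism $h:F\to F$ of finite order $N$. The projection $T(h)\to S^1$ is a fiber bundle, and pulling it back along the degree-$r$ self-cover $q_r:S^1\to S^1$ produces a total space homeomorphic to $T(h^r)$ equipped with a free $\ZZ/r$-action by deck transformations. For any $r$ with $r\equiv 1\pmod{N}$ one has $h^r=h$, so the pullback is homeomorphic to $T(h)$ itself, giving a free $\ZZ/r$-action on $T(h)$. Extending trivially on the second factor yields a free $\ZZ/r$-action on $X=T(h)\times H$; since arbitrarily large $r\equiv 1\pmod{N}$ exist, $\discsym(X)\geq 1$. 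This is the same mechanism as in the paragraph following Example \ref{ex:non-hypertoral}.

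For $\absym(X)=0$: The point of the CWY construction is precisely to exhibit manifolds carrying nontrivial discrete symmetries but no continuous circle symmetry. The plan is to reproduce their obstruction. Assume for contradiction that there is an effective continuous $T^1$-action on $X$. I would then proceed in three steps:
\begin{enumerate}
\item Use the slice theorem to stratify $X$ by orbit type and identify the open principal stratum, on which the action is locally free.
\item Extract, via the Borel construction $X_{T^1}=ET^1\times_{T^1}X\to BT^1$, a rigid topological invariant of the $T^1$-action: for instance, a $\rho$-invariant, an equivariant characteristic class, or (in the aspherical case) a free infinite cyclic summand in the center of $\pi_1(X)=\pi_1(T(h))\times\pi_1(H)$ produced by the Conner-Raymond theorem.
\item Compute this invariant on $X=T(h)\times H$ using the explicit data of the CWY construction, and observe that the value precomputed in [CWY, \S 2] is incompatible with the value forced by a continuous $T^1$-action, giving the desired contradiction.
\end{enumerate}

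The main obstacle is clearly the second part, and within it step (3): one must invoke the precise obstruction built into the CWY examples and verify that it rules out \emph{every} effective $T^1$-action, including non-free ones with nontrivial fixed-point set. The first inequality is formal, being a direct application of the mapping-torus pullback argument already used elsewhere in the paper.
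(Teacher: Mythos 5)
Your argument for $\discsym(X)\geq 1$ rests on a false premise: in the construction of \cite[\S 2]{CWY} the homeomorphism $h:V\to V$ is \emph{not} of finite order. It is glued from involutions $h_W$, $h_{W'}$ on the two end pieces and a homeomorphism $h_C$ of the middle cylinder $T^n\times[0,1]$ interpolating between a linear involution $h_T$ and an exotic involution $h_{T'}$, and this $h_C$ is not an involution (nor of any finite order). Indeed, if $h$ were of finite order $N$, the mapping torus $T(h)=(V\times\RR)/\ZZ$ would carry the effective circle action induced by translation in the $\RR$-factor (an action of $\RR/N\ZZ$), which would give $\absym(X)\geq 1$ and contradict the very equality $\absym(X)=0$ you are trying to prove; the whole point of the CWY examples is that $h$ is only \emph{homotopically} periodic. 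Consequently the identity ``$h^r=h$ for $r\equiv 1\pmod N$'' is unavailable, and the genuine content of the statement $\discsym(X)\geq 1$ is precisely the claim that $T(h^d)$ is homeomorphic to $T(h)$ for every odd $d$ even though $h^d\neq h$. This is what Section \ref{s:CWY} of the paper proves: one first chooses $h_C$ so that $h_C^2$ is homotopic to $\Id_C$ rel $\partial C$ (Proposition \ref{prop:homotopy-rel-bdry}, which requires an explicit correction of $h_C$ using fixed points of $h_T$ and $h_{T'}$ and lifts to the universal cover), and then upgrades the resulting homotopy equivalence $T(h_C)\to T(h_C^d)$ rel boundary to a homeomorphism by repeated use of the topological rigidity of non-positively curved manifolds \cite{Farrell,FJ} applied to $T(h_T)\times[0,1]$, together with a splitting/$s$-cobordism argument. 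None of this appears in your proposal, so the first inequality is not established.

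For $\absym(X)=0$ the paper does not give a new proof: this equality is the main theorem of \cite{CWY} and is simply cited (see the remark following Theorem \ref{thm:CWY}), so the correct and sufficient move is to quote that result. Your three-step plan is only a sketch — you yourself flag step (3) as unresolved — and it also faces a technical obstruction at step (1): for merely continuous circle actions on topological manifolds there is no slice theorem or orbit-type stratification of the kind you invoke, so the argument would have to be rebuilt along the lines of \cite{CWY} (Conner--Raymond style obstructions for aspherical manifolds) rather than via equivariant stratifications. As written, neither half of the statement is proved.
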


%This theorem is not new.
The equality $\absym(X)=0$ is the main result in \cite{CWY}.
The inequality $\discsym(X)\geq 1$ follows from the existence of regular self coverings
$X\to X$ of degree $d$ for every odd natural number $d$.
%$3$, which implies by iteration that $X$ supports a free action of $\ZZ/3^r$
%for every natural number $r$.
The existence of such self coverings (for $d=3$, and hence also for $d=3^k$)
is stated without proof in \cite[Remark 1.3]{vL}, and our contribution in this paper
is to provide a proof (actually, for any odd $d$)
in Section \ref{s:CWY}. A key ingredient in the proof is the topological rigidity of tori.

There are obvious
analogues of the invariants $\discsym$ and $\absym$ for locally linear actions and for smooth
actions on smooth manifolds, and in neither of these categories does one have the equality
$\discsym=\absym$ in general. For locally linear actions there are counterexamples in dimension
$4$, by the work of Edmonds \cite{Edmonds} and Huck \cite{Huck}.
In the smooth category one may take $X=T^n\sharp\Sigma$, where $\Sigma$ is an exotic $n$-sphere.
Then $X$ is homeomorphic to $T^n$, so
$\discsym(X)=n$ by Theorem \ref{thm:main-smooth}, but $\absym(X)=0$ by
the main result in \cite{AssadiBurghelea}. In contrast, for holomorphic actions on compact Kaehler manifolds
one does have $\discsym=\absym$ in general, as proved by Theorem \ref{thm:Kaehler} below.

\subsection{Discrete degree of symmetry and covering spaces}

In the proof of statement (2) in Theorem \ref{thm:discsym-rationally-hypertoral} we reduce
the general case to that in which $\pi_1$ is solvable using the following
result.

\begin{theorem}
\label{thm:discsym-covering}
Let $X$ be a closed connected manifold and let $X'\to X$ be a finite covering.
We have $\discsym(X')\geq\discsym(X)$.
\end{theorem}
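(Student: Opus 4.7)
\emph{Plan.} The strategy is to lift each effective $(\ZZ/r)^k$-action on $X$ through the finite covering $p\colon X'\to X$ and then extract from the lifted action a subgroup of the form $(\ZZ/r')^k$ with $r'\to\infty$ as $r\to\infty$. Set $k=\discsym(X)$, $d=[X':X]$, and fix a subgroup $H\leq\pi_1(X)$ of index $d$ representing the cover. Since $\pi_1(X)$ is finitely generated, it has only finitely many subgroups of index $d$; denote their number by $M$. Given an effective action $\alpha\colon G\to\Homeo(X)$ with $G=(\ZZ/r)^k$, the induced permutation action of $G$ on the set of these subgroups has stabilizer $G_0\leq G$ of the conjugacy class of $H$ satisfying $[G:G_0]\leq M$. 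Since $G$ is abelian, $G_0$ contains the subgroup $\{g^{[G:G_0]}\mid g\in G\}\cong(\ZZ/r_1)^k$ with $r_1\geq r/M$.

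For every $g\in G_0$ the homeomorphism $\alpha(g)$ preserves the conjugacy class of $H$ and therefore lifts to homeomorphisms of $X'$; the totality of such lifts forms a finite group $\tilde G_0$ acting effectively on $X'$ and fitting in an extension
\[
1\longrightarrow\Gamma\longrightarrow\tilde G_0\longrightarrow G_0\longrightarrow 1,
\]
where $\Gamma=\operatorname{Deck}(X'/X)=N_{\pi_1(X)}(H)/H$ has order at most $d$. Effectiveness holds because any element of $\tilde G_0$ inducing the identity on $X'$ is necessarily the identity deck transformation.

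The remaining task is to find $(\ZZ/r')^k\leq\tilde G_0$ with $r'\to\infty$. Replacing $\tilde G_0$ by the preimage of $(\ZZ/r_1)^k\leq G_0$, intersecting with the centralizer $C_{\tilde G_0}(\Gamma)$ (whose index in $\tilde G_0$ is at most $|\Aut\Gamma|$), and iterating the power-map trick, I obtain a subgroup $\tilde G_2$ acting effectively on $X'$ and fitting in a central extension
\[
1\longrightarrow Z_0\longrightarrow\tilde G_2\longrightarrow(\ZZ/r_2)^k\longrightarrow 1,
\]
with $Z_0\leq Z(\Gamma)$ and $r_2$ bounded below by $r$ divided by a constant depending only on the pair $(X,X')$. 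The commutator on $\tilde G_2$ descends to a $\ZZ$-bilinear alternating pairing $\omega$ on $(\ZZ/r_2)^k$ with values in $Z_0$; its radical $A_0$ has index at most $|Z_0|^k\leq d^k$ in $(\ZZ/r_2)^k$, so Lemma \ref{lemma:subgroup-isomorphic-to-Gamma-a-b} provides $(\ZZ/r_3)^k\leq A_0$ with $r_3$ still comparable to $r_2$. The preimage of this subgroup in $\tilde G_2$ is an abelian subgroup (since $\omega$ vanishes on $A_0$), fitting in an abelian extension of $(\ZZ/r_3)^k$ by a subgroup of $Z_0$; a further application of Lemma \ref{lemma:subgroup-isomorphic-to-Gamma-a-b} extracts a subgroup $(\ZZ/r')^k$ with $r'$ again comparable to $r_3$. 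Since $r'\to\infty$ as $r\to\infty$, this yields $\discsym(X')\geq k$.

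The main obstacle is ensuring that each of the successive reductions (stabilizing the conjugacy class of $H$, lifting to $X'$, centralizing $\Gamma$, trivializing the commutator pairing, and passing from a quotient to a subgroup in an abelian extension) costs only a bounded multiplicative factor in the exponent of the cyclic components, so that the final $r'$ still tends to infinity. This is controlled because the bounds $M$, $d$, $|\Aut\Gamma|$, and $|Z(\Gamma)|$ depend only on $X$ and $X'$. A secondary technical point, the possible nonabelianness of $\tilde G_0$ itself, is handled by first centralizing the normal subgroup $\Gamma$ so that the resulting structure is a central extension amenable to commutator-form analysis.
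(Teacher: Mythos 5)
Your argument is correct and follows essentially the same route as the paper: bound the index of the subgroup of $G$ preserving the isomorphism class of the covering (finiteness of coverings/index-$d$ subgroups), lift to get an extension of $(\ZZ/r_1)^k$ by the deck group, and then extract a large $(\ZZ/r')^k$ by centralizing the kernel and killing the commutator pairing of the resulting central extension --- which is exactly the content of Lemma \ref{lemma:extension-subgroup-Gamma-a-b} and Section \ref{s:proof-thm:discsym-covering}. The only slip is the final citation: the abelian preimage is not given as a subgroup of any $(\ZZ/a)^b$, so the lemma needed there is Lemma \ref{lemma:surjection-subgroup} (a finite abelian group surjecting onto $\Gamma_{a,b}$ contains a subgroup isomorphic to $\Gamma_{a,b}$, so you may even take $r'=r_3$) rather than Lemma \ref{lemma:subgroup-isomorphic-to-Gamma-a-b}.
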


The inequality in Theorem \ref{thm:discsym-covering}
can be strict in some cases, as the following theorem proves.
Here and in the rest of the paper we identify $T^n$ with $(\RR/\ZZ)^n$,
so we use additive notation for the group structure on $T^n$.

\begin{theorem}
\label{thm:strict-inequality}
Fix natural numbers $k,n$ satisfying $1\leq k\leq n-1$. Consider the free involution
$\sigma:T^n\to T^n$ defined by
$\sigma(x_1,\dots,x_n)=(x_1+1/2,\dots,x_{k}+1/2,-x_{k+1},\dots,-x_n)$.
Let $X'=T^n$ and let $X=T^n/\sigma$. The natural projection $\rho:X'\to X$ is a covering map
and we have $\discsym X'=n$ and $\discsym X=k$.
\end{theorem}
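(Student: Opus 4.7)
The plan is first to verify easily that $\rho$ is a $2$-to-$1$ covering map ($\sigma$ is free because its first coordinate satisfies $x_1+1/2\neq x_1$ in $\RR/\ZZ$), that $\discsym(X')=n$ (via the standard rotation action of $(\ZZ/r)^n$ on $T^n$), and that $\discsym(X)\geq k$ (via the action of $(\ZZ/r)^k$ on $T^n$ by translation in the first $k$ coordinates, which commutes with $\sigma$ since all translations commute with one another and $\sigma$ is a translation on those coordinates, so descends to $X$; the descended action is effective since the only translation of $T^n$ that could equal $\sigma$ would have to reproduce the sign-flip on the last $n-k$ coordinates, which is impossible).

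The real content is the inequality $\discsym(X)\leq k$. Suppose for contradiction that $(\ZZ/p)^{k+1}$ acts effectively on $X$ for infinitely many odd primes $p$. The group $\wt G$ of lifts of the action to $T^n$ fits in a central extension $1\to\langle\sigma\rangle\to\wt G\to(\ZZ/p)^{k+1}\to 1$ (centrality holds because $\Aut(\ZZ/2)=1$ forces every lift to commute with $\sigma$). For $p$ odd this extension splits by Schur--Zassenhaus, yielding an effective action of $G:=(\ZZ/p)^{k+1}$ on $T^n$ commuting with $\sigma$. By Minkowski's theorem $\GL(n,\ZZ)$ has no element of order $p$ for $p$ above a bound depending on $n$, so the induced $G$-action on $H^*(T^n;\ZZ)$ is trivial for all sufficiently large $p$.

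The technical heart is then a rigidity claim: for such $p$, the $G$-action on $T^n$ is topologically conjugate to an action by translations of $T^n$. Granting this, one concludes as follows. The conjugated involution $\sigma'$ has the same linear part on $H^1(T^n;\ZZ)$ as $\sigma$, namely an involution with $k$-dimensional $+1$-eigenspace and $(n-k)$-dimensional $-1$-eigenspace; the translations of $T^n$ commuting with $\sigma'$ form the $+1$-eigensubtorus, of dimension $k$, whose $p$-torsion is isomorphic to $(\ZZ/p)^k$. An injection of $G\cong(\ZZ/p)^{k+1}$ into $(\ZZ/p)^k$ is impossible, producing the contradiction.

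The main obstacle I expect is the rigidity claim in the previous paragraph. My approach would be to apply topological rigidity of tori (Hsiang--Wall for $n\geq 5$, Freedman for $n=4$, Perelman for $n\leq 3$) to the quotient $T^n/G$, after first verifying that the $G$-action is free for $p$ large --- elements of $G$ act trivially on cohomology, so Smith-theoretic and local-slice arguments should exclude nontrivial fixed-point structure once $p$ is large enough. Once freeness is established, $\pi_1(T^n/G)$ is a central extension of $G$ by $\ZZ^n$; its commutator subgroup, being a quotient of $\Lambda^2 G$ mapping into the torsion-free group $\ZZ^n$, must vanish, so $\pi_1(T^n/G)$ is abelian, and being torsion-free (as the fundamental group of a closed aspherical manifold) and containing $\ZZ^n$ as a finite-index subgroup, it is isomorphic to $\ZZ^n$. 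Topological rigidity then yields $T^n/G\cong T^n$, and any finite covering of $T^n$ by $T^n$ is equivalent to the standard quotient by a sublattice of $\ZZ^n$, forcing $G$ to act by translations.
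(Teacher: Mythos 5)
Your outline gets the easy inequalities and the final counting idea right, but the proof of the key bound $\discsym X\leq k$ has a structural gap. By definition, $\discsym X\leq k$ means there is an $r_0$ such that no effective action of $(\ZZ/r)^{k+1}$ exists for any $r>r_0$; you instead derive a contradiction only from effective $(\ZZ/p)^{k+1}$-actions for arbitrarily large \emph{odd primes} $p$. Arbitrarily large $r$ need not have large (or odd) prime divisors --- a priori $X$ could admit effective $(\ZZ/2^j)^{k+1}$- or $(\ZZ/3^j)^{k+1}$-actions for all $j$, and your argument says nothing about these. Moreover the two reductions that make your setup work are exactly the ones that fail for such $r$: the Schur--Zassenhaus splitting of $1\to\langle\sigma\rangle\to\wt G\to(\ZZ/r)^{k+1}\to 1$ breaks when $r$ is even, and ``$\GL(n,\ZZ)$ has no element of order $p$'' must be replaced by Minkowski's bound together with passing to a bounded-index subgroup, after which one needs a lemma (the paper's Lemma \ref{lemma:subgroup-isomorphic-to-Gamma-a-b}) guaranteeing that this subgroup still contains a $(\ZZ/s)^{k+1}$ with $s$ comparable to $r$. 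The paper's proof never splits the extension: it keeps the whole lift group (central extension by $\langle\sigma\rangle$, all element orders $\leq 2r$), embeds it in $\Aff T^n$ via Lee--Raymond (Lemma \ref{lemma:Gamma-affine-action}), intersects with the kernel of the $\GL(n,\ZZ)$-representation and with the centralizer of $\sigma$, and absorbs the factors of $2$ into the final estimate $r^m\leq C'C2^{n-1}r^k$.

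The second genuine gap is the freeness step inside your rigidity claim: there is no slice theorem for merely continuous actions, and Smith theory by itself does not show that a homologically trivial $\ZZ/p$-action on $T^n$ is free (the Lefschetz number is $0$, so there is no fixed-point obstruction to exploit; largeness of $p$ does not help). The statement is true, but the standard proof is the Conner--Raymond averaging construction --- precisely the paper's Theorem \ref{thm:linearising-action} applied to $\phi=\Id_{T^n}$ of degree $1$, which gives an injective $\eta:G\to T^n$ and an $\eta$-equivariant map, hence freeness, for every such action. Once a tool of this strength is admitted, the Lee--Raymond theorem does the whole job at once (it conjugates any effective finite action on $T^n$ into $\Aff T^n$ compatibly with the $H_1$-representation, with no homological triviality or freeness hypotheses), making your asphericity-plus-topological-rigidity detour unnecessary and simultaneously handling the homologically nontrivial element $\sigma$. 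Smaller points to repair: you never prove the upper bound $\discsym X'\leq n$ (the paper quotes Theorem \ref{thm:bounding-discsym}(1)); the existence of the full lift group $\wt G$ needs justification (every homeomorphism of $X$ lifts because the translation lattice is characteristic in the Bieberbach group $\pi_1(X)$ --- the paper instead passes to a bounded-index subgroup preserving the covering, as in Theorem \ref{thm:discsym-covering}); and the translations commuting with the conjugated involution $\sigma'$ form a group with $2^{n-k}$ components rather than the $+1$-eigensubtorus itself, and since $\sigma'$ is only a homeomorphism, identifying its torsion with $(\ZZ/p)^k$ for odd $p$ requires a short lifting computation rather than the affine centralizer calculation.
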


For example, setting $n=2$ and $k=1$ the manifold $X$ is the Klein bottle and $X'$, the $2$-torus,
is the orientation $2$-cover of $X$.

\subsection{Jordan property and bounds on stabilizers for hypertoral manifolds}

The tools used to prove (1) in Theorem \ref{thm:discsym-rationally-hypertoral} lead to other results
of finite group actions on rationally hypertoral manifolds.

If $X$ is a set supporting an action of a group $G$ we denote
$\Stab(X,G)=\{G_x\mid x\in X\}$ the set of stabilizers of points in $X$. The following
result gives a positive answer to \cite[Question 1.8]{CMPS} for rationally hypertoral manifolds.

\begin{theorem}
\label{thm:CMPS-hypertoral}
Let $X$ be a rationally hypertoral manifold. There exists a constant $C$ such that every
finite group $G$ acting on $X$ has a subgroup $G_0\leq G$ satisfying $[G:G_0]\leq C$
and $|\Stab(X,G)|\leq C$.
\end{theorem}

Recall that a group $\gG$ is
said to be {\it Jordan} if there exists a constant $C$ such that every finite subgroup
$G\leq\gG$ has an abelian subgroup $A\leq G$ satisfying $[G:A]\leq C$.
The following result extends to the topological category the first part of
\cite[Theorem 1.4]{M1}, and it also partially extends \cite[Corollary 1.7]{Ye}.

\begin{theorem}
\label{thm:Jordan}
Let $X$ be a rationally hypertoral manifold. Then the homeomorphism group of $X$ is Jordan.
\end{theorem}

\subsection{Contents}
Section \ref{s:finite-abelian-groups} contains a few elementary results on finite groups
that will be used repeatedly in the paper. In Section \ref{ss:proof-thm:weak-bound-disc-sym}
we prove Theorem \ref{thm:weak-bound-disc-sym}. In Section \ref{s:equivariant-maps-torus}
we prove a result relating finite group actions and maps to tori of nonzero degree.
%that if $X$ is a closed connected oriented $n$-manifold and $\phi:X\to T^n$
%has nonzero degree, then for any finite group action on $X$ which is trivial on $H^1(X;\ZZ)$
%we can replace $\phi$ by a homotopic map which is equivariant with respect to an action on
%$T^n$ by translations. Furthermore, the size of the kernel of the later action is bounded
%above by the degree of $\phi$.
This result is used in Section \ref{s:consequences-thm:linearising-action} to prove the first part of Theorem \ref{thm:discsym-rationally-hypertoral}, and also to prove Theorems \ref{thm:CMPS-hypertoral}
and \ref{thm:Jordan}. Section \ref{s:commutative-algebra} contains the proof of Theorem
\ref{thm:finite-generation} (which is Corollary \ref{cor:finitely-generated-plus}).
In Section \ref{s:coverings} we prove Theorems \ref{thm:discsym-covering} and \ref{thm:strict-inequality} on covering spaces.
In Section \ref{s:cohomology-finitely-generated} we prove that the homology of some
abelian covers is finitely generated as a module over the group ring of the group of deck transformations.
After these preliminaries, in Section \ref{s:proof-thm:main-thm} we prove Theorem \ref{thm:discsym-rationally-hypertoral} and Corollary \ref{cor:homeomorphic-torus},
and in Section \ref{s:proof-thm:bounding-discsym} we prove Theorem \ref{thm:bounding-discsym}.
Section \ref{s:proof-thm:main-smooth} contains the proof of Theorem \ref{thm:main-smooth}. In Section \ref{s:holomorphic}
we study the holomorphic analogues of the discrete degree of symmetry of closed Kaehler
manifolds, and we prove Theorems \ref{thm:Lie-groups} and \ref{thm:Kaehler}.
Finally, in Section \ref{s:CWY}
we prove Theorem \ref{thm:CWY} on the existence of closed manifold whose toral rank is strictly
bigger than its discrete degree of symmetry.

\subsection{Notation} For every finite set $S$ we denote by $|S|$ the cardinality of $S$.
We use additive notation for abelian groups.
For any natural numbers $a,b$ we denote for convenience
$$\Gamma_{a,b}:=(\ZZ/a)^b.$$

\subsection{Acknowledgements} I wish to thank Jordi Daura for pointing out to me reference
\cite{vL}, which led to Theorem \ref{thm:CWY}. Thanks also to Bandi Szab\'o and Costya Shramov
for useful comments.

\section{Some lemmas on finite abelian groups}
\label{s:finite-abelian-groups}

\newcommand{\Obj}{\operatorname{Obj}}

\begin{lemma}
\label{lemma:subgroup-isomorphic-to-Gamma-a-b}
Let $a,b,C$ be natural numbers and suppose that $\Gamma'$ is a subgroup of
$\Gamma_{a,b}$ satisfying $[\Gamma_{a,b}:\Gamma']\leq C$. There exists
a subgroup $\Gamma''\leq\Gamma'$ which is isomorphic to $\Gamma_{a',b}$ for
some natural number $a'$ dividing $a$ and satisfying $C!a'\geq a$.
\end{lemma}
\begin{proof}
Let $d=GCD(a,C!)$ and let $a'=a/d$.
Note that $d\leq C!$, so $C!a'\geq a$.
We prove that $d\Gamma_{a,b}\leq\Gamma'$. Let $\gamma\in\Gamma_{a,b}$
and let $\la\gamma\ra$ denote the subgroup generated by $\gamma$.
Let $I=[\la\gamma\ra:\la\gamma\ra\cap\Gamma']$.
Since $|\la\gamma\ra|$ divides $a$, $I$ divides $a$. Since $I\leq C$, $I$ divides
$C!$. Hence $I$ divides $d$, which implies that $d\gamma\in\Gamma'$. Since
$d\Gamma_{a,b}\simeq\Gamma_{a',b}$, the lemma follows.
\end{proof}

The next lemma follows easily from Pontryagin duality
(see e.g. \cite[(A3),(A11)]{FrTa}).

\begin{lemma}
\label{lemma:surjection-subgroup}
Let $G,H$ be finite abelian groups. There is a subgroup of $G$ isomorphic to $H$ if and only
if there is a quotient of $G$ isomorphic to $H$.
\end{lemma}

Combining Lemma \ref{lemma:subgroup-isomorphic-to-Gamma-a-b} with the previous result
we immediately obtain the following.

\begin{lemma}
\label{lemma:subgroup-isomorphic-to-Gamma-a-b-quotient}
Let $a,b,C$ be natural numbers and suppose given a surjection
$q:\Gamma_{a,b}\to\Gamma'$ satisfying $|\Ker q|\leq C$. There exists
a subgroup $\Gamma''\leq\Gamma'$ which is isomorphic to $\Gamma_{a',b}$ for
some natural number $a'$ dividing $a$ and satisfying $C!a'\geq a$.
\end{lemma}

\begin{lemma}
\label{lemma:extension-subgroup-Gamma-a-b}
For any natural numbers $b$ and $C_1$ there exists a natural number $C_2$ with the following property.
Suppose that $\Gamma$ is a finite group and that
$N$ is a normal subgroup of $\Gamma$ satisfying $|N|\leq C_1$.
Suppose that $\Gamma/N\simeq\Gamma_{a,b}$ for some natural number $a$. Then there is a subgroup
$\Gamma'\leq\Gamma$ which is isomorphic to $\Gamma_{a',b}$ for some natural number $a'$
satisfying $C_2a'\geq a$.
\end{lemma}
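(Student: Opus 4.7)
The plan is to first pass to a subgroup of $\Gamma$ of bounded index so that a large copy of $\Gamma_{a',b}$ appears in the quotient by a central subgroup, then extract an abelian subgroup by a commutator-killing trick, and finally apply Lemma \ref{lemma:surjection-subgroup} to convert the resulting surjection into the required embedded copy of $\Gamma_{a'',b}$.

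First I will set $C=C_\Gamma(N)$. Since conjugation gives a morphism $\Gamma\to\Aut N$ with kernel $C$, we have $[\Gamma:C]\leq |\Aut N|\leq |N|!\leq C_1!$. The image $CN/N$ is a subgroup of $\Gamma/N\simeq\Gamma_{a,b}$ of index at most $[\Gamma:C]\leq C_1!$, so Lemma \ref{lemma:subgroup-isomorphic-to-Gamma-a-b} provides a subgroup $H\leq CN/N$ isomorphic to $\Gamma_{a',b}$ with $(C_1!)!\cdot a'\geq a$. Let $\tilde H\leq C$ denote the preimage of $H$; this sits in a short exact sequence
$$1\to Z(N)\to\tilde H\to H\to 1,$$
because $\tilde H\cap N=C\cap N=Z(N)$. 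The subgroup $Z(N)$ lies in the centre of $\tilde H$, since every element of $C$ centralises all of $N$ and in particular $Z(N)$. Hence $\tilde H$ is a central extension of $\Gamma_{a',b}$ by an abelian group of order $m:=|Z(N)|\leq C_1$, and therefore nilpotent of class at most $2$.

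Next I will consider $\tilde H'\leq\tilde H$, the preimage in $\tilde H$ of the subgroup $mH\simeq\Gamma_{a'',b}$ with $a''=a'/\gcd(a',m)\geq a'/C_1$. I claim that $\tilde H'$ is abelian. Because $[\tilde H,\tilde H]\subseteq Z(N)\subseteq Z(\tilde H)$, for any $\tilde x,\tilde y\in\tilde H$ one has the identity $[\tilde x^m,\tilde y^m]=[\tilde x,\tilde y]^{m^2}$, and the right-hand side is trivial since $[\tilde x,\tilde y]\in Z(N)$ has order dividing $m$. Every element of $\tilde H'$ can be written as $\tilde x^m z$ for some $\tilde x\in\tilde H$ and some $z\in Z(N)$ (lift the projection and correct by a central element), so the previous identity together with centrality of $Z(N)$ shows that any two elements of $\tilde H'$ commute. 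Lemma \ref{lemma:surjection-subgroup}, applied to the surjection $\tilde H'\to mH\simeq\Gamma_{a'',b}$ of finite abelian groups, then produces a subgroup $\Gamma'\leq\tilde H'\leq\Gamma$ isomorphic to $\Gamma_{a'',b}$. Chaining the bounds yields $a\leq (C_1!)!\cdot a'\leq (C_1!)!\cdot C_1\cdot a''$, so the constant $C_2:=C_1\cdot(C_1!)!$ (independent of $b$) works.

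The main technical point will be the verification that $\tilde H'$ is abelian; this depends on the nilpotency class $\leq 2$ of $\tilde H$ together with the fact that commutators, which take values in $Z(N)$, are killed by the exponent $m$ of $Z(N)$. Everything else reduces to index tracking and applications of the two lemmas already established.
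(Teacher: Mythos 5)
Your argument is correct, and it follows the paper's proof up to the point where one must deal with the central extension, after which it diverges in an interesting way. Like the paper, you pass to the centralizer of $N$ (the paper phrases this as the kernel of $\Gamma\to\Aut N$), observe its index is at most $C_1!$, pull back a copy of $\Gamma_{a',b}$ from the quotient, and arrive at a central extension $1\to Z(N)\to\tilde H\to\Gamma_{a',b}\to 1$. From there the paper abelianizes by introducing the commutator pairing $\beta:Q_0\times Q_0\to Z$, passing to the kernel of the induced map $Q_0\to\Hom(Q_0,Z)$ (index at most $C_1^b$), shrinking again via Lemma \ref{lemma:subgroup-isomorphic-to-Gamma-a-b}, and only then invoking Lemma \ref{lemma:surjection-subgroup}; the resulting constant $C_2=(C_1^b)!(C_1!)!$ depends on $b$. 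You instead exploit that $\tilde H$ has nilpotency class at most $2$ and that $Z(N)$ has exponent dividing $m=|Z(N)|\leq C_1$, so that $[\tilde x^m,\tilde y^m]=[\tilde x,\tilde y]^{m^2}=1$; hence the preimage of $mH\simeq\Gamma_{a'',b}$ (with $a''=a'/\gcd(a',m)\geq a'/C_1$) is already abelian, and Lemma \ref{lemma:surjection-subgroup} finishes as in the paper. Your verification of the key point is sound: every element of $\tilde H'$ is of the form $\tilde x^m z$ with $z$ central, and commutator bilinearity in class-$2$ groups gives the vanishing. The payoff of your route is both economy (one application of Lemma \ref{lemma:subgroup-isomorphic-to-Gamma-a-b} instead of two, no commutator pairing bookkeeping) and a quantitatively better conclusion: your constant $C_2=C_1\cdot(C_1!)!$ is independent of $b$, a slight strengthening of the lemma as stated, whereas the paper's constant grows with $b$. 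This makes no difference to the applications in the paper, but it is a genuine, self-contained improvement of the key step.
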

\begin{proof}
By assumption there is a surjective morphism $\pi:\Gamma\to Q:=\Gamma_{a,b}$ whose kernel is $N$.
Let $c:\Gamma\to\Aut N$ be the morphism given by the conjugation action of $\Gamma$ on $N$.
Let $\Gamma_0=\Ker c$. Then $[\Gamma_{a,b}:\pi(\Gamma_0)]\leq [\Gamma:\Gamma_0]\leq |\Aut N|\leq C_1!$.
By Lemma \ref{lemma:subgroup-isomorphic-to-Gamma-a-b} there is a subgroup $Q_0\leq \pi(\Gamma_0)$
which is isomorphic to $\Gamma_{a_0,b}$ for some natural number $a_0$ satisfying $(C_1!)!a_0\geq a$.
The kernel of the restriction $\pi_0$ of $\pi$ to $\Gamma_0$ coincides with the center $Z$ of $N$.
Let $\Gamma_1:=\pi_0^{-1}(Q_0)$. We have an exact sequence
$$0\to Z\to \Gamma_1\stackrel{\pi_0}{\longrightarrow}Q_0\simeq\Gamma_{a_0,b}\to 0.$$
Since this is a central extension, one can define a bilinear morphism
$\beta:Q_0\times Q_0\to Z$
by setting, for any two elements $u,v\in Q_0$,
$\beta(u,v)=[\wt{u},\wt{v}],$
where $\wt{u},\wt{v}\in\Gamma_1$ are arbitrary lifts of $u,v$.
Define a morphism of groups
$$\phi_{\beta}:Q_0\to\Hom(Q_0,Z)$$
setting $(\phi_{\beta}(u))(v)=\beta(u,v)$ for every $u,v\in Q_0$. Now, $Q_0$ can be generated by $b$ elements
because $Q_0\simeq\Gamma_{a_0,b}$, so we may bound
$$|\Hom(Q_0,Z)|\leq |Z|^b\leq |N|^b\leq C_1^b.$$
Consequently,
$Q_1:=\Ker\phi_{\beta}$
satisfies $[Q_0:Q_1]\leq C_1^b$.
By construction, $\pi_0^{-1}(Q_1)$ is an abelian subgroup of $\Gamma_1$.
Using again Lemma \ref{lemma:subgroup-isomorphic-to-Gamma-a-b} we deduce the existence of a subgroup
$Q_2\leq Q_1$ which is isomophic to $\Gamma_{a',b}$ for some natural number $a'$ satisfying
$(C_1^b)!a'\geq a_0$. Then $\pi_0^{-1}(Q_2)$ is a finite abelian group surjecting onto
$\Gamma_{a',b}$, and hence by Lemma \ref{lemma:surjection-subgroup} there is a subgroup
$\Gamma'\leq \pi_0^{-1}(Q_2)\leq\Gamma$ which is isomorphic to $\Gamma_{a',b}$. Setting
$C_2=(C_1^b)!(C_1!)!$ we have
$$C_2a'=(C_1^b)!(C_1!)!a'\geq (C_1!)!a_0\geq a.$$
This finishes the proof of the lemma.
\end{proof}

The next two lemmas refer to finite subgroups of tori. Recall that we use additive notation
for the group structure on tori.

\begin{lemma}
\label{lemma:upper-bound-subgroup-T-d}
Let $\Gamma$ be a finite subgroup of $T^d$ satisfying
$a\gamma=0$ for some natural number $a$ and
every $\gamma\in\Gamma$. Then $|\Gamma|\leq a^d$.
In particular, if $T^d$ contains a subgroup isomorphic to $\Gamma_{a,b}$ for some
$a\geq 2$ then $b\leq d$.
\end{lemma}
\begin{proof}
Let $\pi:\RR^d\to T^d=\RR^d/\ZZ^d$ denote the projection. Then $\pi^{-1}(\Gamma)$ is a discrete
subgroup of $\RR^d$, so it can be generated by $d$ or fewer elements, say $g_1,\dots,g_{d'}$ with
$d'\leq d$, see e.g. \cite[Chap I, Lemma (3.8)]{BtD}. Let $\gamma_i=\pi(g_i)$. Then $\gamma_1,\dots,\gamma_{d'}$ is a generating set of
$\Gamma$, so the morphism $\Gamma_{a,d'}\to\Gamma$ sending $(\ov{\alpha}_1,\dots,\ov{\alpha}_{d'})\in\Gamma_{a,d'}$
to $\sum \alpha_i\gamma_i$ is surjective (here $\alpha_i\in\ZZ$ and $\ov{\alpha}_i$ is the class of $\alpha_i$
in $\ZZ/a$). It follows that $|\Gamma|\leq |\Gamma_{a,d'}|=a^{d'}\leq a^d$.
\end{proof}

\begin{lemma}
\label{lemma:upper-bound-intersection-with-S}
Let $\Gamma$ be a finite subgroup of $T^d$ satisfying
$a\gamma=0$ for some natural number $a$ and
every $\gamma\in\Gamma$.
Let $1\leq j\leq d$ be any integer
and let
$$S=\{(\theta_1,\dots,\theta_d)\in T^d\mid \theta_i=0\text{ for $i\neq j$}\}.$$
Then $|\Gamma\cap S|\geq |\Gamma|/a^{d-1}$.
\end{lemma}
\begin{proof}
We can identify $\Gamma/(\Gamma\cap S)$ with a finite subgroup of $T^d/S\simeq T^{d-1}$,
all of whose elements have order dividing $a$. Hence, by Lemma \ref{lemma:upper-bound-subgroup-T-d},
we have $|\Gamma/(\Gamma\cap S)|\leq a^{d-1}$.
The exact sequence $0\to\Gamma\cap S\to\Gamma\to \Gamma/(\Gamma\cap S)\to 0$
then implies
$$|\Gamma\cap S|=\frac{|\Gamma|}{|\Gamma/(\Gamma\cap S)|}\geq |\Gamma|/a^{d-1},$$
as we wished to prove.
\end{proof}

Recall that the rank of a finite group $G$ is the minimal
size of a generating subset of $G$. We denote the rank of $G$ by $\rk G$.
If $q:G\to H$ is a surjection then clearly $\rk G\geq\rk H$. By Lemma
\ref{lemma:surjection-subgroup}, it follows that if $G$ is a finite abelian
group and $H\leq G$ then $\rk H\leq\rk G$.

The following result was mentioned in the introduction.

\begin{lemma}
\label{lemma:discsym-rank-groups}
Let $X$ be a closed manifold. For any integer $k$ the following are equivalent:
\begin{enumerate}
\item $\discsym(X)\leq k$,
\item exists a constant $C$ such that
every finite abelian group $A$ acting effectively on $X$ has a subgroup $A'\leq A$ satisfying
$[A:A']\leq C$ and $\rk A'\leq k$.
\end{enumerate}
\end{lemma}
\begin{proof}
We prove (1) $\Rightarrow$ (2).
If $\discsym(X)\leq k$ then there exists a natural number $r_0$ such that if
$\Gamma_{r,k+1}$ acts effectively on $X$ then $r\leq r_0$. By \cite[Theorem 2.5]{MS}
there exists a natural number $m_0$ such that, for any prime $p$, if
$\Gamma_{p,m}$ acts effectively on $X$ then $m\leq m_0$.
Let $A$ be a finite abelian group acting effectively on $X$.
There is an isomorphism $\phi:A\to \ZZ/d_1\oplus\dots\oplus\ZZ/d_m$, where
$d_{i+1}$ divides $d_i$ for each $i$ and $d_m\geq 2$. If $m\leq k$, then $\rk A\leq k$.
Suppose $m>k$. Then $\ZZ/d_1\oplus\dots\oplus\ZZ/d_{k+1}$ contains a subgroup isomorphic
to $\Gamma_{d_{k+1},k+1}$, so $d_{k+1}\leq r_0$. Let $p$ be a prime dividing $d_m$.
Then $\ZZ/d_1\oplus\dots\oplus\ZZ/d_m$ has a subgroup isomorphic to $\Gamma_{p,m}$, so
$m\leq m_0$. Let $A'=\phi^{-1}(\ZZ/d_1\oplus\dots\oplus\ZZ/d_k)$. Then $\rk A'=k$
and $[A:A']=|\ZZ/d_{k+1}\oplus\dots\oplus\ZZ/d_m|\leq d_{k+1}^{m-k}\leq C:=r_0^{m_0}$.
The implication (2) $\Rightarrow$ (1) follows
from Lemma \ref{lemma:subgroup-isomorphic-to-Gamma-a-b},
the fact the rank does not increase when passing from a
finite abelian group to a subgroup, and the fact that $\rk \Gamma_{a,b}=b$ for
any $a\geq 2$ and $b$.
\end{proof}

\section{Proof of Theorem \ref{thm:weak-bound-disc-sym}}
\label{ss:proof-thm:weak-bound-disc-sym}

We first recall a few results that will be used in the proof of Theorem \ref{thm:weak-bound-disc-sym}. The following is \cite[Theorem 1.4.14]{AP}.

\begin{theorem}
\label{thm:Carlsson-Baumgartner}
Let $p$ be a prime and let $X$ be a paracompact topological space on which
$\Gamma_{p,m}$ acts freely and trivially on $H^*(X;\ZZ/p)$.
Suppose there exists some $i_0\in\NN$ such that that $H^i(X/G;\ZZ/p)=0$
for all $i\geq i_0$. Then $m\leq |\{j\mid H^j(X;\ZZ/p)\neq 0\}|$.
\end{theorem}

Theorem \ref{thm:Carlsson-Baumgartner}
was originally proved by Gunnar Carlsson in \cite{Carlsson} for $p=2$, and later
by Christoph Baumgartner for odd $p$ in his PhD Thesis.
The following is \cite[Corollary 3.3]{Mundet2022}.

\begin{lemma}
\label{lemma:linearising-continuous}
Let $X$ be a connected $n$-manifold, and suppose that a finite $p$-group $G$
acts continuously and effectively on $X$. If $X^G\neq\emptyset$ then $G$
is isomorphic to a subgroup of $\GL(n,\RR)$.
\end{lemma}

The following theorem is a consequence of a lemma of Minkowski
which states that the size of any finite subgroup of $\GL(n,\ZZ)$ is bounded by
a constant depending only on $n$ (see \cite{Min,Serre}). For details, see
\cite[Lemma 2.6]{Mundet2016}.

\begin{theorem}
\label{thm:Minkowski}
Let $X$ be a compact manifold. There exists a constant $C$ such that, for
every action on $X$ of a finite group $G$, there is a subgroup $G'\leq G$
satisfying $[G:G']\leq C$ whose action on $H^*(X;\ZZ)$ is trivial.
\end{theorem}

Finally, this is \cite[Theorem 1.3]{CMPS}.

\begin{theorem}
\label{thm:CMPS}
Let $X$ be a manifold with finitely generated $H_*(X;\ZZ)$. There exists a constant $C$
such that for every action of a finite $p$-group $G$ on $X$ there is a subgroup $H\leq G$
containing the center of $G$ and satisfying $[G:H]\leq C$ and $|\Stab(H,X)|\leq C$.
\end{theorem}

We now begin the proof of Theorem \ref{thm:weak-bound-disc-sym}.
We will combine two results. This is the first one.

\begin{theorem}
\label{thm:small-primes}
Let $X$ be a closed connected manifold. For every prime $p$ there exists a constant $C(X,p)$ such that if $\Gamma_{p^e,m}$ acts effectively but not freely on $X$
and $m>\dim X$ then $e\leq C(X,p)$.
\end{theorem}
\begin{proof}
Suppose that $G:=\Gamma_{p^e,m}$ acts effectively but not freely on $X$.
By \cite[Corollary 1.5]{CMPS} (see also \cite[Remark 1.6]{CMPS}) there
exists some constant $C'$, depending only on $X$,
such that $U=\{x\in X\mid G_x=\{1\}\}$
satisfies $\dim_{\ZZ/p} H^*(U;\ZZ/p)\leq C'$.
The set $U$ is a proper subset of $X$ because by assumption the action of $G$ on $X$ is not free, and (since $X$ is connected) $U$ does not contain any connected component of $X$. $U$ is also open, so it is a manifold of the same dimension as $X$ with no closed connected component;
consequently $H^i(U;\ZZ/p)=0$ for $i\geq\dim U=\dim X$.
The image of the morphism $G\to \Aut H^*(U;\ZZ/p)$ induced by the action of $G$ on $U$
is a $p$-subgroup, so it is contained
in a Sylow $p$-subgroup of $\Aut H^*(U;\ZZ/p)$. We can identify $\Aut H^*(U;\ZZ/p)$ with a
subgroup of $\GL(C',\ZZ/p)$, which admits as a Sylow $p$-subgroup the group of upper triangular
matrices with $1$'s in the diagonal. The exponent\footnote{Recall that the exponent of a finite
group is the lcm of the orders of the elements of the group.} of such group is $p^{C(p,X)}$, where
$C(p,X):=\lceil \log_pC'\rceil$.
This implies that $K:=p^{C(p,X)}G$ acts trivially on $H^*(U;\ZZ/p)$.
If $e>C(p,X)$ then $K$ contains a subgroup isomorphic to $\Gamma_{p,m}$
so, by Theorem \ref{thm:Carlsson-Baumgartner}, we have $m\leq \dim X$.
\end{proof}

For the second result needed to prove Theorem \ref{thm:weak-bound-disc-sym}
we have to introduce some notation.
Given natural numbers $0<k<n$ define
the polynomial $C_{n,k}(x):=\prod_{i=0}^{k-1}(x^n-x^i)$.
The rational function $Q_{n,k}:=C_{n,k}/C_{k,k}$
is a polynomial, because for any root of unity $\zeta$
the multiplicity of $\zeta$ as a root
of $C_{n,k}$ is not smaller than its multiplicity as a root of $C_{k,k}$,
as the reader can easily check.
Furthermore, $Q_{n,k}$ is monic of degree $k(m-k)$.
%Let $G_{p,m}=(\ZZ/p)^m$.
Let
$\sS_{p,m,k}=\{H\leq \Gamma_{p,m}\mid H\simeq \Gamma_{p,k}\}$. Given $\Gamma\leq \Gamma_{p,m}$
let $\sS_{p,m,k}(\Gamma)=\{H\in \sS_{p,m,k}\mid H\cap\Gamma=0\}$.

\newcommand{\spanvect}{\sigma}

\begin{lemma}
\label{lemma:counting-subgroups}
We have $|\sS_{p,m,k}|=Q_{m,k}(p)$.
If $\Gamma\leq \Gamma_{p,m}$ satisfies $\Gamma\simeq \Gamma_{p,s}$ then
$|\sS_{p,m,k}(\Gamma)|=Q_{m-s,k}(p)p^{ks}$.
\end{lemma}
\begin{proof}
We consider $\Gamma_{p,m}$ as an $m$-dimensional vector space over $\ZZ/p$.
Let $\fF_{p,m,k}=\{(v_1,\dots,v_k)\text{ linearly independent elements of }\Gamma_{p,m}\}$.
The map $\spanvect:\fF_{p,m,k}\to\sS_{p,m,k}$ sending $(v_1,\dots,v_k)$ to its span is surjective, and for every $H\in \sS_{p,m,k}$ we can identify $\spanvect^{-1}(H)$
with $\fF_{p,k,k}$ using any isomorphism $H\simeq \Gamma_{p,k}$. Hence
$|\sS_{p,m,k}|=|\fF_{p,m,k}|/|\fF_{p,k,k}|$, so it suffices to prove
that $|\fF_{p,m,k}|=C_{m,k}(p)$. This follows from induction on $k$.
The initial case $k=1$ is obvious. For the induction step, observe that,
given $(v_1,\dots,v_{k-1})\in \fF_{p,m,k-1}$, the set of $v_k\in \Gamma_{p,m}$
such that $(v_1,\dots,v_{k})\in \fF_{p,m,k}$ is equal to the set
$\Gamma_{p,m}\setminus\spanvect(v_1,\dots,v_{k-1})$, which has $p^m-p^{k-1}$ elements.
This proves the first formula in the lemma.

Now suppose that $\Gamma\leq \Gamma_{p,m}$ satisfies $\Gamma\simeq\Gamma_{p,s}$.
Choose $\Gamma'\leq \Gamma_{p,m}$ such that $\Gamma_{p,m}=\Gamma\oplus\Gamma'$
and pick an isomorphism $f:\Gamma'\to \Gamma_{p,m-s}$. Let $\pi:\Gamma_{p,m}=\Gamma\oplus\Gamma'\to\Gamma'$ be the projection.
For each $H\in \sS_{p,m,k}(\Gamma)$, $\pi(H)\leq\Gamma'$ is isomorphic
to $\Gamma_{p,k}$, so $f\circ\pi$ defines a map $\phi:\sS_{p,m,k}(\Gamma)\to\sS_{p,m-s,k}$.
The map $\phi$ is surjective, and given $K\in \sS_{p,m-s,k}$ we can identify
$\phi^{-1}(K)$ with the set $\mM$ of linear maps $h:f^{-1}(K)\to\Gamma$ (by associating
to $h$ its graph). Since $|\mM|=p^{ks}$, we obtain
the desired formula for $|\sS_{p,m,k}(\Gamma)|$.
\end{proof}

For each $0<k<m$ and $s\leq m-k$ the polynomial $R_{m,k,s}:=Q_{m,k}-Q_{m-s,k}x^{ks}$
has degree less than $k(m-k)$. For each
$\Gamma\leq \Gamma_{p,m}$ let $\dD_{p,m,k}(\Gamma)=\sS_{p,m,k}\setminus\sS_{p,m,k}(\Gamma)$.
By the previous lemma we have $|\dD_{p,m,k}(\Gamma)|=R_{m,k,s}(p)$, where
$\Gamma\simeq \Gamma_{p,s}$.

\begin{theorem}
\label{thm:big-primes}
Let $X$ be a closed connected $n$-dimensional manifold and let $m=[3n/2]+1$.
There exists a constant
$C'(X)$ such that for every prime $p\geq C'(X)$ and any effective action of
$\Gamma_{p,m}$ on $X$ there exists a subgroup $H\leq \Gamma_{p,m}$ isomorphic to $\Gamma_{p,n+1}$
which intersects trivially all stabilizers of the action of $\Gamma_{p,m}$ on $X$.
\end{theorem}
\begin{proof}
Let $C$ be the constant given by applying Theorem \ref{thm:CMPS} to $X$.
Let $m=[3n/2]+1$. For each $1\leq s\leq [n/2]$ there exists some $C_s$ such that
for every $t\geq C_s$ we have $R_{m,n+1,s}(t)<C^{-1}Q_{m,n+1}(t)$, because
$Q_{m,n+1}$ is monic and $\deg R_{m,n+1,s}<\deg Q_{m,n+1}$.
We claim that $C'(X):=3+\max\{C_s\mid 1\leq s\leq [n/2]\}$ has the desired property.
Indeed, suppose that $p\geq C'(X)$ is a prime and $G:=\Gamma_{p,m}$ acts effectively on $X$.
By Theorem \ref{thm:CMPS} we have $|\Stab(G,X)|\leq C$. For every $K\in\Stab(G,X)$
there exists some $x\in X$ such that $G_x=K$, so, by Lemma  \ref{lemma:linearising-continuous},
$K$ is isomorphic to a subgroup of $\GL(n,\RR)$.
Since $p\geq 3$, this implies that $\dim K\leq [n/2]$. Hence, by Lemma \ref{lemma:counting-subgroups} we have
$|\dD_{p,m,n+1}(\Gamma)|=R_{m,n+1,s}(p)<C^{-1}Q_{m,n+1}(p)$, which implies that
$\bigcup_{K\in\Stab(G,X)}\dD_{p,m,n+1}(K)\neq\sS_{p,m,n+1}$. Consequently there
exists some $H\in \sS_{p,m,n+1}$ that does not belong to $\dD_{p,m,n+1}(K)$ for
any $K\in\Stab(G,X)$. Equivalently, $H$ intersects trivially each $K\in\Stab(G,X)$.
\end{proof}

We are now ready to prove Theorem \ref{thm:weak-bound-disc-sym}.
Let $X$ be a closed and connected $n$-manifold. Let $m=[3n/2]+1$.
Arguing by contradiction,
suppose that there exists a sequence of integers $r_i\to\infty$ and an
effective action of $\Gamma_{r_i,m}$ on $X$ for each $i$.

Let $\pP=\{p\text{ prime}\mid p\text{ divides $r_i$ for some $i$}\}$.
We distinguish two possibilities. If $\pP$ is infinite, then we can take a sequence
of primes $p_j$ belonging to $\pP$ and satisfying $p_j\to\infty$. Each
$p_j$ divides $r_{i_j}$ for some $i_j$, so $\Gamma_{p_j,m}$ is isomorphic to
a subgroup of $\Gamma_{r_{i_j},m}$ and hence by restricting the action of the
latter to the former we obtain, for each $j$, an effective action of
$\Gamma_{p_j,m}$ on $X$ for each $j$. By Theorem \ref{thm:big-primes}
we get a free action of $\Gamma_{p_j,n+1}$ on $X$ for big enough $j$.
By Theorem \ref{thm:Minkowski}, if $j$ is big enough then
the action of $\Gamma_{p_j,n+1}$ on $H^*(X;\ZZ)$,
and hence on $H^*(X;\ZZ/p_j)$, is trivial. This contradicts
Theorem \ref{thm:Carlsson-Baumgartner}.

The second possibility is that $\pP$ is bounded. In that case, there exists some
$p\in\pP$ and a sequence of natural numbers $e_j\to\infty$ such that $p^{e_j}$
divides $r_{i_j}$ for some $i_j$. Arguing as before, this gives an effective
action of $\Gamma_{p^{e_j},m}$ on $X$ for each $j$. This contradicts
Theorem \ref{thm:small-primes}, so the proof of Theorem \ref{thm:weak-bound-disc-sym}
is finished.

\section{Equivariant maps to the torus}
%: proof of Theorem \ref{thm:linearising-action}}
\label{s:equivariant-maps-torus}

In all this section $X$ denotes a closed, connected and oriented
$n$-dimensional manifold.
We identify $T^n$ with the quotient $\RR^n/\ZZ^n$,
and we use additive notation for the group structure on $T^n$.
Suppose that $G$ is a group,
$\eta:G\to T^n$ is a group homomorphism,
and $G$ acts on a space $X$. A map $\phi:X\to T^n$ will be called {\it $\eta$-equivariant}
if it satisfies $\phi(g\cdot x)=\eta(g)+\phi(x)$ for every $x\in X$ and $g\in G$.

\begin{theorem}
\label{thm:linearising-action}
Let $X$ be a closed, connected and oriented $n$-dimensional topological manifold.
Let $\phi:X\to T^n$ be a continuous map of nonzero degree. Let $G$ be a finite group.
Suppose that $X$ is endowed with an effective action
of $G$ inducing the trivial action on $H^1(X;\ZZ)$. Then there is a morphism of groups $\eta:G\to T^n$ with these properties:
\begin{enumerate}
\item the map $\phi$ is homotopic to an $\eta$-equivariant map $\psi:X\to T^n$,
\item %\marginpar{abans hi havia desigualtat, ara divisibilitat; fer canvis on calgui}
    $|\Ker\eta|$ divides $\deg\phi$.
\end{enumerate}
\end{theorem}

Before proving the theorem we prove three auxiliary  lemmas.
The first lemma is a topological
analogue of the construction at the beginning of \cite[\S 2.1]{M1}.
We identify $S^1$ with $\RR/\ZZ$ and accordingly we use additive notation
for the group structure on $S^1$.

\begin{lemma}
\label{lemma:equivariant-map-to-circle}
Let $\alpha:X\to S^1$ be a continuous map. Let $\theta$ be a generator of $H^1(S^1;\ZZ)$.
Suppose that a finite group $G$ acts continuously on
$X$ preserving $\alpha^*\theta$. Let $r$ be the cardinal of $G$ and let $\mu_r\subset S^1$ denote
the group of $r$-th roots of unity.
There exists a morphism of groups $\xi:G\to\mu_r$
and a continuous map $\beta:X\to S^1$ homotopic to $\alpha$ such that $\beta(g\cdot x)=\xi(g)+\beta(x)$
for every $x\in X$ and $g\in G$.
\end{lemma}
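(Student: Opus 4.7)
The plan is to modify $\alpha$ by adding an average over $G$ of continuous real lifts of the defect of $G$-equivariance. Since $X$ is a CW complex and $S^1$ is a $K(\ZZ,1)$, the assignment $f\mapsto f^*\theta$ identifies $[X,S^1]$ with $H^1(X;\ZZ)$. The hypothesis that $G$ preserves $\alpha^*\theta$ translates into the statement that for each $g\in G$, the map $\alpha_g:=\alpha\circ g - \alpha : X\to S^1$ (subtraction in $S^1=\RR/\ZZ$) pulls $\theta$ back to zero, hence is nullhomotopic. Since $X$ is path connected, each $\alpha_g$ lifts to a continuous $\tilde\alpha_g:X\to\RR$ along $\RR\to S^1$, uniquely up to an integer constant; fix such lifts with $\tilde\alpha_e=0$.

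Comparing the two natural expressions for $\alpha(ghx)$ yields $\tilde\alpha_{gh}(x)-\tilde\alpha_h(x)-\tilde\alpha_g(hx)\in\ZZ$ for every $x$; by continuity and connectedness of $X$, this integer is a constant $n(g,h)\in\ZZ$. Summing over $g\in G$ and reindexing by $g'=gh$ gives
\[
\sum_{g\in G}\bigl(\tilde\alpha_g(hx)-\tilde\alpha_g(x)\bigr) \;=\; -r\,\tilde\alpha_h(x) - N_h,\qquad N_h:=\sum_{g\in G} n(g,h)\in\ZZ.
\]
Now set $\beta(x) := \alpha(x) + \tfrac{1}{r}\sum_{g\in G}\tilde\alpha_g(x) \pmod{\ZZ}$. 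Combining the displayed identity with $\alpha(hx)-\alpha(x)\equiv\tilde\alpha_h(x)\pmod{\ZZ}$ yields $\beta(hx)-\beta(x)\equiv -N_h/r\pmod{\ZZ}$, a quantity depending only on $h$. Defining $\xi(h):=-N_h/r\in\RR/\ZZ$ gives a map $\xi:G\to\mu_r$ (since $r\,\xi(h)=-N_h\in\ZZ$), and chaining $\beta(ghx)-\beta(x)=[\beta(ghx)-\beta(hx)]+[\beta(hx)-\beta(x)]$ immediately forces $\xi(gh)=\xi(g)+\xi(h)$.

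Finally, $\beta-\alpha$ is the reduction modulo $\ZZ$ of the continuous function $\tfrac{1}{r}\sum_g\tilde\alpha_g$, so it pulls $\theta$ back to zero and is therefore nullhomotopic; hence $\beta\simeq\alpha$. The only point requiring some care is the integer ambiguity in the lifts $\tilde\alpha_g$: replacing $\tilde\alpha_g$ by $\tilde\alpha_g+k_g$ with $k_g\in\ZZ$ changes $n(g,h)$ by $k_{gh}-k_h-k_g$, hence changes $N_h$ by an integer, and shifts $\beta$ by the $x$-independent constant $\tfrac{1}{r}\sum_g k_g\in\tfrac{1}{r}\ZZ$. Thus different choices of lifts produce different but equally valid pairs $(\beta,\xi)$, and in every case $\xi$ lands in $\mu_r$ and $\beta$ remains homotopic to $\alpha$. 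Beyond this bookkeeping, the argument is pure averaging and involves no deeper obstruction.
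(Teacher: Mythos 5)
Your proof is correct, but it takes a genuinely different route from the paper's. The paper averages $\alpha$ itself inside $S^1$: it sets $\zeta(x)=\sum_{g\in G}\alpha(g\cdot x)$, notes $\zeta^*\theta=r\alpha^*\theta$, and produces $\beta$ as a section of the principal $\mu_r$-bundle $\Gamma=\{(x,t)\mid\zeta(x)=rt\}$, whose triviality is established by a monodromy/divisibility argument in $H^1(X;\ZZ)$; the relation $r\beta=\zeta$ then makes the defect $\chi_g(x)=\beta(g\cdot x)-\beta(x)$ a continuous $\mu_r$-valued, hence constant, function, and $\beta\simeq\alpha$ follows from $r\beta^*\theta=r\alpha^*\theta$ together with torsion-freeness of $H^1(X;\ZZ)$. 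You instead lift the nullhomotopic defect maps $\alpha_g=\alpha\circ g-\alpha$ to $\RR$ and average there; note that $r\beta(x)=r\alpha(x)+\sum_g\tilde\alpha_g(x)\equiv\sum_g\alpha(g\cdot x)\pmod{\ZZ}$, so your $\beta$ is exactly an explicit section of the paper's bundle, constructed by hand rather than obtained from the abstract triviality statement. This is essentially the lifting-and-averaging argument attributed to Csik\'os, Pyber and Szab\'o in the closing remark of Section \ref{s:equivariant-maps-torus}. What each buys: the paper's version isolates divisibility of $\zeta^*\theta$ by $r$ as the only obstruction and avoids cocycle bookkeeping; yours is more elementary and self-contained, the homotopy $\beta\simeq\alpha$ being immediate because $\beta-\alpha$ lifts to $\RR$, at the cost of tracking the integers $n(g,h)$ and the lift ambiguity, which you handle correctly (in fact, replacing $\tilde\alpha_g$ by $\tilde\alpha_g+k_g$ changes $N_h$ by $-rk_h$, so $\xi$ is literally unchanged in $\RR/\ZZ$ and only $\beta$ shifts by the constant $\tfrac1r\sum_g k_g$, which slightly strengthens your final remark).
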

\begin{proof}
Define $\zeta:X\to S^1$ by $\zeta(x)=\sum_{g\in G}\alpha(g\cdot x)$ for every $x\in X$.
Then $\zeta$ is continuous and constant on $G$-orbits.
Let $\rho_g:X\to X$ be the homeomorphism induced by the action of $g\in G$. By assumption $\rho_g^*\alpha^*\theta=\alpha^*\theta$ for every $g\in G$. We have $\zeta^*\theta=\sum_{g\in G} \rho_g^*\alpha^*\theta=r\alpha^*\theta$.

Let $\Gamma=\{(x,t)\in X\times S^1\mid \zeta(x)=rt\}$.
Let $\pi:\Gamma\to X$ be the restriction of the projection map $X\times S^1\to X$. The action of $\mu_r$ on $\Gamma$ given
by $(x,t)\cdot\theta=(x,t\theta)$ endows $\pi:\Gamma\to X$ with a structure of principal $\mu_r$-bundle.
We claim that it is a trivial principal bundle. This is equivalent to the triviality of the monodromy of $\pi$,
which we denote by $\nu:\pi_1(X,x_0)\to\mu_r$, where $x_0\in X$ is an arbitrary base point.
If there existed some $\lambda\in\pi_1(X,x_0)$ such that $\nu(\lambda)\neq 0$ then the pairing
of $\zeta^*\theta$ with $[\lambda]\in H_1(X;\ZZ)$ would not be divisible by $r$, which contradicts
the fact that $\zeta^*\theta=r\alpha^*\theta$. Hence $\nu$ is trivial and consequently the bundle $\pi:\Gamma\to X$
is trivial, so we may choose a section $\sigma:X\to\Gamma$.

Define $\beta:X\to S^1$ by the condition
that $\sigma(x)=(x,\beta(x))$. Then $\beta$ is continuous and we have $r\beta(x)=\zeta(x)$ for every $x\in X$. For any $g\in G$ define
$\chi_g:X\to S^1$ by $\chi_g(x)=\beta(g\cdot x)-\beta(x)$. We have
$r\chi_g(x)= r\beta(g\cdot x)-r\beta(x)=\zeta(g\cdot x)-\zeta(x)=0$ because $\zeta$ is $G$-invariant.
Hence $\chi_g$ takes values in $\mu_r$, and consequently, being continuous, it is a constant map. We may thus define a map $\xi:G\to\mu_r$ by the condition that
$\xi(g)=\chi_g(x)$ for every $x\in X$. Let $g,g'\in G$ and let $x\in X$.
We have
$$\chi_{gg'}(x)=\beta(gg'\cdot x)-\beta(x)
=\beta(gg'\cdot x)-\beta(g'\cdot x)+\beta(g'\cdot x)-\beta(x)
=\chi_g(g'\cdot x)+\chi_{g'}(x),$$
which proves that $\xi(gg')=\xi(g)+\xi(g')$, so $\xi$ is a morphism of groups. Now the formula
$\beta(g\cdot x)=\xi(g)+\beta(x)$ follows immediately from the definition
of $\xi$. To conclude the proof, note that
$r\beta^*\theta=\zeta^*\theta=r\alpha^*\theta$, so $r(\beta^*\theta-\alpha^*\theta)=0$. Since $H^1(X;\ZZ)$ has no torsion
we conclude that $\beta^*\theta=\alpha^*\theta$. Hence
$\beta$ and $\alpha$ are homotopic, because $S^1$ is a model for
$K(\ZZ,1)$.
\end{proof}

\begin{lemma}
\label{lemma:X-star-connected}
Let $G$ be a finite group acting effectively, continuously and preserving the orientation on $X$. Let $X^*\subseteq X$ be the set of points with trivial stabilizer. Then $X^*$ is connected.
\end{lemma}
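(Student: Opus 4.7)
The plan is to prove that the complement $Y := X \setminus X^* = \bigcup_{g \in G \setminus \{e\}} X^g$ is a closed subset of $X$ of codimension at least $2$; once this is established, $X^* = X \setminus Y$ is connected, because a closed subset of codimension at least $2$ cannot locally disconnect a connected topological manifold. Concretely, any path in $X$ joining two points of $X^*$ can be perturbed in a neighborhood of each of its crossings with $Y$ so as to lie entirely in $X^*$, using the local Euclidean structure of $X$ together with the codimension bound.

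Closedness of $Y$ is immediate, as each $X^g$ is the preimage of the diagonal under the continuous map $x \mapsto (x, g \cdot x)$ and $Y$ is a finite union of such sets. For the codimension estimate, I would first reduce to elements of prime order: if $g \in G$ has order $m$ and $p$ is a prime dividing $m$, then $g^{m/p}$ has order $p$ and $X^g \subseteq X^{g^{m/p}}$, so it suffices to control $X^h$ for $h$ of prime order.

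So let $h \in G$ be of prime order $p$, giving an effective orientation-preserving action of $\langle h \rangle \cong \ZZ/p$ on $X$. By Smith theory, each connected component of $X^h$ is a $\ZZ/p$-cohomology manifold of some codimension in $X$; the hypothesis that $h$ preserves the orientation class of $X$ forces the codimension of every such component to be even (cohomologically, the local action of $h$ on the normal direction at each fixed point splits as a sum of rotations by $2\pi/p$, so the moving part has even real dimension). Newman's theorem rules out codimension $0$, since a nontrivial finite-order homeomorphism of a manifold cannot fix an open set, so every component of $X^h$ has codimension at least $2$, as required.

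The main obstacle is precisely this codimension statement in the topological category. In the smooth setting it is an easy consequence of the slice theorem: the normal representation of $h$ at a fixed point cannot have a $(-1)$-eigenspace without reversing orientation, forcing the fixed set to have even codimension. In the topological setting there is no slice theorem available, and one must instead invoke the full Smith-theoretic machinery for $\ZZ/p$-actions on cohomology manifolds, as developed in Bredon's \emph{Introduction to Compact Transformation Groups}. Once this ingredient is in hand, the rest of the proof is purely point-set topological.
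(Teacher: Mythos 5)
Your proposal follows essentially the same route as the paper: reduce to elements of prime order, invoke Smith theory (Borel/Bredon) to see that each fixed-point set is a $\ZZ/p$-cohomology manifold of even codimension (the orientation-preserving hypothesis being what handles $p=2$), rule out codimension $0$ by Newman/effectiveness so the codimension is at least $2$, and conclude that deleting these sets leaves a connected space. The one place where your write-up is weaker than the paper's is the final step: the fixed sets are only cohomology manifolds (not locally flat submanifolds) and their ``codimension'' is a priori cohomological, so the path-perturbation argument ``at each crossing'' is not a proof; the paper instead applies, iteratively as the fixed sets of the prime-order elements are removed one at a time, the separation result for closed subsets of cohomological dimension at most $n-2$ in a connected cohomology $n$-manifold (Borel, Chap.~I, Cor.~4.7), which is the rigorous substitute for that perturbation.
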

\begin{proof}
For any $g\in G$ denote $X^g=\{x\in X\mid g\cdot x=x\}$.
Let $g_1,\dots,g_s$ be the (nontrivial) elements of $G$ of prime order.
Since any nontrivial element of $G$ has some power belonging to
the set $\{g_1,\dots,g_s\}$, we have $X^*=X\setminus\bigcup_i X^{g_i}$.
Define $X_1=X$ and
$X_i=X\setminus (X^{g_1}\cup\dots\cup X^{g_{i-1}})$ for $2\leq i\leq s+1$.
Then $X_i$ is an open subset of $X$ for each $i$.
We prove that $X_i$ is connected for every $1\leq i\leq s+1$, using ascending induction on $i$. Since $X^*=X_{s+1}$, this will imply the lemma.
Clearly $X_1$ is connected. Now suppose that $1\leq i\leq s$ and that $X_i$ is connected. Let $p_i$ be the order of $g_i$.
Since $G$ acts on $X$ preserving the orientation, by
\cite[Chap V, Theorem 2.3]{Bo}
and
\cite[Chap V, Theorem 2.5]{Bo},
$X^{g_i}$ is a $\ZZ/p_i$-cohomology manifold
of dimension $d_i$, where $n-d_i$ is even.
Arguing as in the proof of \cite[Chap V, Theorem 2.6]{Bo}
we conclude that $d_i<n$, and consequently $d_i\leq n-2$.
Applying
\cite[Chap I, Corollary 4.7]{Bo} we conclude that
$X_{i+1}=X_i\setminus (X_i\cap X^{g_i})$ is connected, so the lemma is proved.
\end{proof}

The following result generalizes \cite[Lemma 2.5]{DS}.

\begin{lemma}
\label{lemma:degree-quotient}
Suppose that a finite group $G$ acts effectively, continuously, and
preserving the orientation on $X$. Let $\pi:X\to X/G$ denote the quotient
map. Let $r$ denote the cardinal of $G$. The image of the map $\pi^*:H^n(X/G;\ZZ)\to H^n(X;\ZZ)$ is contained in $r H^n(X;\ZZ)$.
\end{lemma}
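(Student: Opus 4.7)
The strategy is to exhibit a generator $\eta_0$ of $H^n(X/G;\ZZ)/\textup{tors}$ satisfying $\pi^*\eta_0 = \pm r\,[X]^*$; this immediately yields the lemma, since any class $\alpha\in H^n(X/G;\ZZ)$ can be written as $k\eta_0 + t$ with $k\in\ZZ$ and $t$ torsion, and $\pi^* t = 0$ in the torsion-free group $H^n(X;\ZZ)\cong\ZZ$.

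First, a rank computation. Since $G$ preserves the orientation of $X$, it acts trivially on $H^n(X;\QQ)\cong\QQ$, and the standard transfer identification for finite group quotients gives $H^n(X/G;\QQ)\cong H^n(X;\QQ)^G = \QQ$. Hence $H^n(X/G;\ZZ)/\textup{tors}$ is infinite cyclic, with a generator $\eta_0$.

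Next, I would compute $\pi^*\eta_0$ via the pushforward $\pi_*[X]$. Set $U = X^*$, $V = \pi(U) = (X/G)^*$, and $W = (X/G)\setminus V$. By Lemma \ref{lemma:X-star-connected}, $U$ is open and connected, and $\pi|_U\colon U\to V$ is an $r$-fold covering of connected oriented $n$-manifolds; hence $(\pi|_U)_*[U]^{BM} = r\,[V]^{BM}$ in $H_n^{BM}(V;\ZZ)\cong\ZZ$. The natural restriction $\rho\colon H_n(X/G;\ZZ)\to H_n(X/G,W;\ZZ)\cong H_n^{BM}(V;\ZZ)\cong\ZZ$ therefore sends $\pi_*[X]$ to $r$. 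The long exact sequence of the pair $(X/G,W)$ shows that $\rho$ is surjective provided $H_{n-1}(W;\ZZ)=0$; assuming this, pick $\gamma$ with $\rho(\gamma)=1$. Under the identification $H^n(X/G;\ZZ)/\textup{tors}\cong\Hom(H_n(X/G;\ZZ),\ZZ)=\ZZ$ (from the rank-one computation and universal coefficients, noting that the Ext contribution is torsion), $\rho$ is itself a generator, so $\eta_0 = \pm\rho$. The projection formula then gives $\langle\pi^*\eta_0,[X]\rangle = \langle\eta_0,\pi_*[X]\rangle = \pm\rho(\pi_*[X]) = \pm r$, i.e., $\pi^*\eta_0 = \pm r\,[X]^*$, as desired.

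The principal obstacle is the vanishing $H_{n-1}(W;\ZZ) = 0$. Here $W$ is the union of the images $\pi(X^g)$ over nontrivial $g\in G$, and by the proof of Lemma \ref{lemma:X-star-connected}, each $X^g$ with $g$ of prime order $p$ is a $\ZZ/p$-cohomology manifold of dimension at most $n-2$. Converting these mod-$p$ cohomological bounds into the required integral vanishing for $W$, across the various primes dividing $|G|$, would be carried out via a Bockstein/Smith-theoretic analysis of the isotropy stratification of $W$.
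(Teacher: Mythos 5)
Your reduction of the lemma to the identity $\pi^*\eta_0=\pm r\,[X]^*$ is sound as formal bookkeeping, and your key computation on the free locus (an $r$-sheeted covering $X^*\to X^*/G$ of connected oriented manifolds, with orientation-preservation making all sheets count with the same sign) is exactly the geometric heart of the matter. But the proof has a genuine gap at the step you yourself flag: the surjectivity of $\rho\colon H_n(X/G;\ZZ)\to H_n(X/G,W;\ZZ)$, which you reduce to the unproved vanishing $H_{n-1}(W;\ZZ)=0$ and defer to a ``Bockstein/Smith-theoretic analysis.'' Smith theory only gives that each $X^g$ ($g$ of prime order $p$) is a $\ZZ/p$-cohomology manifold of dimension at most $n-2$ in the sheaf-theoretic sense; it does not make $F=X\setminus X^*$ or $W=F/G$ an ANR, and for compact metric spaces that are not ANRs, \emph{singular} homology is not controlled by covering dimension (Barratt--Milnor-type examples have low-dimensional compacta with nonzero singular homology in high degrees). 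So the integral singular vanishing $H_{n-1}(W;\ZZ)=0$ is not a conversion of the Smith-theoretic dimension bounds but an essentially new claim, and it is doubtful as stated. The same wildness of $W$ also undercuts two steps you treat as automatic: the excision-type identification $H_n(X/G,W;\ZZ)\cong H_n^{BM}(V;\ZZ)$ requires tautness/niceness of the pair, and the splitting $H^n(X/G;\ZZ)=\ZZ\eta_0\oplus(\text{torsion})$ via universal coefficients needs the $\Ext$ term to be torsion, i.e.\ finite generation of $H_{n-1}(X/G;\ZZ)$, which again is not free for topological actions.

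For comparison, the paper's proof never leaves compactly supported sheaf cohomology precisely to avoid these issues: it shows that $H^n_c(X^*;\ZZ)\to H^n_c(X;\ZZ)$ and $H^n_c(X^*/G;\ZZ)\to H^n_c(X/G;\ZZ)$ are surjective, using the connectedness of $X^*$ (Lemma \ref{lemma:X-star-connected}) together with the vanishing $H^k_c(F;\ZZ)=H^k_c(F/G;\ZZ)=0$ for $k\geq n$ (which follows from the exact sequence, $\dim X=n$, and Borel's theorem on finite quotients) --- note this is vanishing in degrees $\geq n$, one degree higher than what your route demands, and in a theory where dimension arguments are valid. It then performs the ``$r$ equal contributions'' count on a small open set $U\subset X^*$ with pairwise disjoint $G$-translates. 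To make your approach rigorous you would have to transplant the pair sequence, the Borel--Moore identification and the universal-coefficients step into the \v{C}ech/sheaf-theoretic setting, at which point the argument becomes essentially the paper's proof in dualized form; so what is missing is not a routine verification but the actual technical content of the lemma.
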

\begin{proof}
Denote as in the previous lemma by $X^*$ the open subset of $X$ consisting of points with trivial stabilizer. Since $X/G$ is endowed with the quotient topology, $\pi(X^*)=X^*/G$ is an open subset of $X/G$. Let
$F=X\setminus X^*$.
Consider the following commutative diagram, where $H_c^*(\cdot;\ZZ)$ denotes cohomology
with compact support,
the rows are portions of the long exact sequences
for the inclusions
$X^*\hookrightarrow X\hookleftarrow F$ and
$X^*/G\hookrightarrow X/G \hookleftarrow F/G$ (see e.g. \cite[Chap I, (2) in \S1.1]{Bo}),
and the vertical arrows are pullback morphisms induced by proper maps:
$$\xymatrix{  H^n_c(X^*;\ZZ) \ar[r]^j %\ar[r]_{j_{X,X^*}}
& H^n_c(X;\ZZ) \ar[r]^{r} & H^n_c(F;\ZZ) \ar[r] & 0\\
H^n_c(X^*/G;\ZZ)\ar[u]^{\pi^*} \ar[r]^-{j_G} %\ar[r]_{j_{X/G,X^*/G}}
& H^n_c(X/G;\ZZ) \ar[u]^{\pi^*} \ar[r]^{r_G} %\ar[r]_{r_{X/G,X^*/G}}
& H^n_c(F/G;\ZZ)\ar[u]\ar[r] & 0,}$$
The morphism $j$ is an isomorphism because, by Lemma \ref{lemma:X-star-connected},
$X^*$ is connected (see \cite[Chap I, Theorem 4.3]{Bo}). By the exactness this implies that
$H^n_c(F;\ZZ)=0$. The long exact sequence for $X^*\hookrightarrow X\hookleftarrow F$ and the
fact that $\dim X=n$ imply that $H^k_c(F;\ZZ)=0$ for every $k\geq n$
(here we are using \cite[Chap I, (3) in \S 1.2]{Bo}).
From \cite[Chap III, Theorem 5.2]{Bo} it follows that
$H^k_c(F/G;\ZZ)$ for every $k\geq n$. Hence $j_G$ is surjective. Consequently it suffices to prove that the image of the morphism
$\pi^*:H^n_c(X^*/G;\ZZ)\to H^n_c(X^*;\ZZ)$ is contained in $r H^n_c(X^*;\ZZ)$.

Denote $G^*=G\setminus\{1\}$. We are going to prove that there exists a connected open subset $U\subset X^*$ such
that $gU\cap U=\emptyset$ for every $g\in G^*$. Fix some point $x\in X^*$. Since
$X^*$ is Hausdorff, for every $g\in G^*$ there exists disjoint open subsets $A_g,B_g\subset X^*$ such
that $x\in A_g$ and $gx\in B_g$. Let $C=\bigcap_{g\in G^*}A_g$. Then $gx\notin \ov{C}$ for every $g\in G^*$,
because $C\cap B_g=\emptyset$. This implies that $x$ belongs to the open set $D:=C\setminus\bigcup_{g\in G^*}g\ov{C}$.
Let $U\subset D$ be a connected open subset containing $x$. Then for every $g\in G^*$ we have
$gU\subset gC$ because $U\subset C$, and consequently $gU\cap D=\emptyset$, which implies that
$gU\cap U=\emptyset$.

Let $V=\pi(U)$, so that $\pi^{-1}(V)=GU=\bigcup_{g\in G}gU$.
Consider the following commutative diagram:
$$\xymatrix{H_c^n(GU;\ZZ)\ar[r]^{j_{GU}} & H_c^n(X^*;\ZZ) \\
H_c^n(V;\ZZ)\ar[u]^{\pi_V^*}\ar[r]^-{j_V} & H_c^n(X^*/G;\ZZ) \ar[u]^{\pi^*},}$$
where $j_{GU}$ and $j_V$ are the covariant morphisms induced by open embeddings and the vertical
arrows are pullback morphisms induced by proper morphisms.
By \cite[Chap I, Theorem 4.3]{Bo} $j_V$ is an isomorphism, so it suffices to prove that
the image of $j_{GU}\circ\pi_V^*$ is contained in $r H_c^n(X^*;\ZZ)$.
Since $gU\cap U=\emptyset$ for every
$g\in G^*$, the open subset $GU$ contains $r=|G|$ connected components,
which are $\{gU\mid g\in G\}$. Denote by $i_g:gU\to GU$
the inclusion. The pullback morphisms $i_g^*:H_c^n(GU;\ZZ)\to H_c^n(gU;\ZZ)$ combine to give an isomorphism
$H_c^n(GU;\ZZ)\stackrel{\simeq}{\longrightarrow}\bigoplus_{g\in G}H_c^n(gU;\ZZ)$. Let $j_{gU}:H_c^n(gU;\ZZ)\to H_c^n(X^*;\ZZ)$
be the morphism induced by the open embedding $gU\hookrightarrow X^*$. We have
$j_{GU}=\sum_{g\in G}j_{gU}\circ i_g^*$, so if we prove that
$j_{gU}\circ i_g^*\circ \pi_V^*=j_{hU}\circ i_h^*\circ \pi_V^*$ for every $g,h\in G$ then
we will be done. Take two elements $g,h\in G$ and let $\rho:X^*\to X^*$ be the map given
by $\rho(x)=gh^{-1}\cdot x$. Then $\rho$ is a homeomorphism and it restricts to a homeomorphism
$\rho:hU\to gU$. The induced morphism $\rho^*:H_c^n(X^*;\ZZ)\to H_c^n(X^*;\ZZ)$
is the identity because $G$ acts on $X$ (and hence on $X^*$) preserving the orientation.
Now, the desired equality follows from the commutativity of the following diagram:
$$\xymatrix{ & H_c^n(hU;\ZZ)\ar[r]^{j_{hV}} & H_c^n(X^*;\ZZ) \\
H_c^n(V;\ZZ)\ar[ur]^{i_j^*\circ\pi_V^*} \ar[dr]_{i_g^*\circ \pi_V^*} \\
& H_c^n(gU;\ZZ)\ar[uu]_{\rho^*} \ar[r]^{j_{gU}} & H_c^n(X^*;\ZZ)\ar[uu]_{\rho^*=\Id}}$$
The triangle commutes because $\pi_V\circ i_h=\pi_V\circ i_g\circ \rho$ and the square
commutes because $\rho$ is a homeomorphism and the inclusion $hU\hookrightarrow X^*$
is equal to the composition of the inclusion $gU\hookrightarrow X^*$
and $\rho$.
\end{proof}

We are now ready to prove Theorem \ref{thm:linearising-action}. Let
$\phi_i:X\to S^1$ be the composition of $\phi$ with the projection
to the $i$-th factor $T^n=(S^1)^n\to S^1$. Since $G$ acts trivially
on $H^1(X;\ZZ)$, in particular it fixes $\phi_i^*\theta$, where
$\theta\in H^1(S^1;\ZZ)$ is any generator. Applying Lemma \ref{lemma:equivariant-map-to-circle}
to $\phi_i$ we obtain the existence of a morphism of groups $\eta_i:G\to S^1$ and
a map $\psi_i:X\to S^1$ homotopic to $\phi_i$ such that $\psi_i(g\cdot x)=\eta_i(g)+\psi_i(x)$
for every $x\in X$ and $g\in G$. Define
$\psi=(\psi_1,\dots,\psi_n):X\to T^n$ and $\eta=(\eta_1,\dots,\eta_n):G\to T^n$.
Then $\psi$ is homotopic to $\phi$ and it is $\eta$-equivariant.

Let $G_0=\Ker\eta$. The map $\psi$ factors as a composition
$$X\stackrel{\pi}{\longrightarrow} X/G_0\stackrel{\psi_0}{\longrightarrow} T^n,$$
where $\pi$ is the natural quotient map. Hence $\psi^*=\pi^*\circ\psi_0$,
so the image of $\psi^*:H^n(T^n;\ZZ)\to H^n(X;\ZZ)$ is contained in the
image of $\pi^*:H^n(X/G_0;\ZZ)\to H^n(X;\ZZ)$. By the definition of degree,
the image of $\psi^*:H^n(T^n;\ZZ)\to H^n(X;\ZZ)$ is equal to $(\deg \psi)H^n(X;\ZZ)$,
and by Lemma \ref{lemma:degree-quotient} the image of $\pi^*$ is contained in
$|G_0|\cdot H^n(X;\ZZ)$. It then follows that $|G_0|$ divides $\deg\psi$.
But $\deg\psi=\deg\phi$ because $\psi$ and $\phi$ are homotopic. So the proof
of the theorem is now complete.

\begin{remark}
The previous results have been independently proved by Csik\'os, Pyber and Szab\'o,
lifting $\phi$ to a map $\zeta$ from the universal cover of $X$ to that of $T^n$, and defining
$\psi$ as the average the translates of
$\zeta$ by lifts of the action of elements of $G$ to the universal covers of $X$ and $T^n$.
\end{remark}

\section{Some consequences of Theorem \ref{thm:linearising-action}}
\label{s:consequences-thm:linearising-action}
%We next explain how Theorem \ref{thm:linearising-action} implies
%Theorems \ref{thm:CMPS-hypertoral} and Theorem \ref{thm:Jordan}.

Let $X$ be a closed, connected and
oriented $n$-dimensional manifold.
Choose a continuous map $\phi:X\to T^n$ satisfying
$|\deg\phi|=\min\{|\deg\psi|\mid \psi:X\to T^n,\,\deg\psi\neq 0\}$.

Suppose that a finite group $G$ acts continuously
on $X$. Let $G'$ be the kernel of the induced morphism $G\to\Aut H^1(X;\ZZ)$.
By Theorem \ref{thm:Minkowski} we have $[G:G']\leq C_X$ for some
constant $C_X$ depending only on $X$. By Theorem \ref{thm:linearising-action}
there is a morphism of groups $\eta:G'\to T^n$ satisfying
$|\Ker\eta|\leq |\deg\phi|$ and an $\eta$-equivariant map $\psi:X\to T^n$
homotopic to $\phi$.
The existence of the $\eta$-equivariant map $\psi$ implies that
for every $x\in X$ we have $G_x\leq\Ker\eta$
(if $g\in G_x$ then $\eta(g)=\psi(g\cdot x)-\psi(x)=\psi(x)-\psi(x)=0$), so the previous
bound implies that $|\Stab(X,G)|\leq 2^{|\deg\phi|}$. This proves
Theorem \ref{thm:CMPS-hypertoral}.

Suppose that the group in the previous argument is $G=\Gamma_{a,b}$. By Lemma
\ref{lemma:subgroup-isomorphic-to-Gamma-a-b}
there is a subgroup of $G'$ isomorphic to $\Gamma_{a',b}$ where $a'$ divides $a$ and
$a'\geq a/C_X!$. By Lemma \ref{lemma:upper-bound-subgroup-T-d}
we have $|\eta(\Gamma_{a',b})|\leq a^n$, and hence
$(a/C_X!)^b\leq |\Gamma_{a',b}|\leq |\Ker\eta|\cdot a^n\leq |\deg\phi|a^n$. Since $|\deg\phi|$
only depends on $X$ (and not on $a$ and $b$),
we conclude that, assuming $a$ is big enough, $b\leq n$. This implies
statement (1) in Theorem \ref{thm:discsym-rationally-hypertoral}.

To prove Theorem \ref{thm:Jordan} note that $\eta(G')$, being a subgroup of $T^n$,
is abelian and can be generated by $n$ or fewer elements.
If $\deg\phi=1$ then $G'\simeq\eta(G')$, so
$G'$ is abelian. This implies that $\Homeo(X)$ is Jordan in this case.
For other values of $\deg\phi$, we
apply \cite[Lemma 2.2]{M1} to the exact sequence
$$1\to\Ker\eta\to G'\to \eta(G')\to 1$$
and since $|\Ker\eta|$ divides $\deg\phi$
we deduce the existence of an abelian subgroup $G''\leq G'$ such that
$[G':G'']$ is bounded above by a constant depending only on $n$ and $\deg \phi$.
Since $n$ and $\deg\phi$ only depend on $X$,
$[G:G'']$ is bounded above by a constant depending only on $X$.
Hence $\Homeo(X)$ is Jordan.

If $\deg\phi=1$ Theorem \ref{thm:Jordan} can also be proved using \cite[Theorem 2.5]{GLO}.

Theorem \ref{thm:linearising-action} can also be used to give examples
of rationally hypertoral manifolds which are not hypertoral. Let $k\geq 2$ and $n\geq 2$
be integers. Let $\pi_L:L\to T^n$ be a complex line bundle, and let $\sigma$ be a smooth section
of $L^{\otimes k}$ intersecting transversely and nontrivially the zero section.
Let $Y=\{v\in L\mid v^{\otimes k}=\sigma(\pi(v))\}$. Then $Y$ is a smooth connected
$n$-manifold. Let $G$ be the group of $k$-th roots of unity. The action of $G$
on $L$ by multiplication preserves $Y$. Let $\gamma\in G$ be a generator.
Let $X=(Y\times\RR)/\sim$ where $(v,t+1)\sim (\gamma\cdot v,t)$.
Let $\pi_R:\RR\to\RR/\ZZ$ be the projection. Then
$\pi_L\times\pi_R:Y\times\RR\to T^n\times\RR/\ZZ=T^{n+1}$ descends
to a continuous map $X\to T^{n+1}$ of degree $k$. Hence $X$ is rationally hypertoral.
The following theorem implies that $X$ is not hypertoral.

\begin{theorem}
\label{thm:non-hypertoral}
For any continuous map $\phi:X\to T^{n+1}$ the degree of $\phi$ is divisible by $k$.
\end{theorem}
\begin{proof}
The diagonal action of $G$ on $Y\times\RR$ (trivial on $\RR$)
descends to an action of $G$ on $X$ with nonempty fixed point set
(because $\sigma^{-1}(0)\neq\emptyset$). Let $\gamma^*:H^*(Y;\QQ)\to H^*(Y;\QQ)$
be induced by the action of $\gamma$. For every $k$ there is a
$G$-equivariant exact sequence
$$0\to \Coker (1-\gamma^*)|_{H^{k-1}(Y;\QQ)}
\to H^k(X;\QQ)
\to \Ker(1-\gamma^*)|_{H^k(Y;\QQ)}\to 0.$$
The actions of $G$ on
$\Coker (1-\gamma^*)|_{H^{k-1}(Y;\QQ)}$ and
$\Ker(1-\gamma^*)|_{H^k(Y;\QQ)}$ are trivial.
Since $G$ is finite, it follows that the action of $G$ on $H^k(X;\QQ)$ is trivial.
Since $H^1(X;\ZZ)$ is torsion free, the action of $G$ on $H^1(X;\ZZ)$ is also trivial.
Let $\phi:X\to T^{n+1}$ be any continuous map. By Theorem \ref{thm:linearising-action}
there is a morphism $\eta:G\to T^{n+1}$ such that $\phi$ is homotopic to an $\eta$-equivariant map $X\to T^{n+1}$. Since the fixed point set of $G$ is nonempty, $\eta$ is necessarily the trivial
map. It follows that $\deg \phi$ is divisible by $|\Ker\eta|=|G|=k$.
\end{proof}

\section{Finitely generated $\ZZ[t_1^{\pm 1},\dots,t_1^{\pm 1}]$-modules}
\label{s:commutative-algebra}

\begin{theorem}
\label{thm:finitely-generated}
Let $A$ be a Noetherian ring and let $M$ be a finitely generated
$A[z]$-module. Suppose that there exists a sequence of integers
$r_j\to\infty$ and
$A[z]$-module morphisms $w_j:M\to M$ such that $w_j^{r_j}$ coincides
with multiplication by $z$. Then $M$ is finitely generated as an $A$-module.
\end{theorem}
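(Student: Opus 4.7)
My approach is to show that $z$, viewed as an endomorphism of $M$, is integral over $A$; combined with the hypothesis that $M$ is finitely generated over $A[z]$, this forces $M$ to be finitely generated over $A$. Let $B := A[z]/\operatorname{Ann}_{A[z]}(M) \subseteq \operatorname{End}_A(M)$ be the image of $A[z]$ in the endomorphism ring: a Noetherian commutative ring generated over (the image of) $A$ by the image $\bar z$ of $z$. Integrality of $\bar z$ over the image $\bar A$ of $A$ would give $B = A[\bar z]$ finitely generated as an $A$-module, and since $M$ is finitely generated over $B$, the conclusion follows.

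For each $j$ consider the commutative subring $R_j := A[z, w_j] \subseteq \operatorname{End}_A(M)$ (commutative because $w_j$ is $A[z]$-linear). By the Cayley--Hamilton theorem for modules (determinant trick) applied to $w_j$ on the finitely generated $A[z]$-module $M$, say with $n$ generators, the endomorphism $w_j$ satisfies a monic polynomial of degree at most $n$ over $A[z]$; hence $R_j$ is finite over $B$, and thus Noetherian and integral over $B$.

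To prove $\bar z$ is integral over $\bar A$ in $B$, since $B$ is Noetherian with finitely many minimal primes it suffices to check, for each minimal prime $\mathfrak p$ of $B$, that the image of $\bar z$ in the domain $B/\mathfrak p$ is integral over the image of $\bar A$. By the valuative criterion in the Noetherian setting this reduces to verifying $v(\bar z) \geq 0$ for every discrete valuation $v$ on $\operatorname{Frac}(B/\mathfrak p)$ whose valuation ring contains $\bar A$. Fix such $\mathfrak p$ and $v$. For each $j$, going-up applied to the integral extension $B \hookrightarrow R_j$ produces a prime $\mathfrak p_j \subset R_j$ contracting to $\mathfrak p$, and $\operatorname{Frac}(R_j/\mathfrak p_j)$ is a finite extension of $\operatorname{Frac}(B/\mathfrak p)$ of degree at most $n$. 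Extending $v$ to a valuation $v_j'$ on $\operatorname{Frac}(R_j/\mathfrak p_j)$, its value group is $\tfrac{1}{e_j}\ZZ$ for some positive integer $e_j \leq n$. The relation $\bar w_j^{r_j} = \bar z$ in $R_j/\mathfrak p_j$ then yields
\[
v(\bar z) \;=\; v_j'(\bar z) \;=\; r_j \cdot v_j'(\bar w_j) \;\in\; \tfrac{r_j}{e_j}\ZZ,
\]
so $e_j\, v(\bar z) \in r_j \ZZ$. This must hold for every $j$; since $r_j \to \infty$ while $e_j \leq n$ stays bounded, the fixed integer $v(\bar z)$ is forced to be $0$, and we are done.

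The main obstacle is that the different endomorphisms $w_j$ need not commute with each other and hence cannot be placed inside a single commutative subring of $\operatorname{End}_A(M)$. The device is to fix a single discrete valuation $v$ on $\operatorname{Frac}(B/\mathfrak p)$ and extend it independently to each $\operatorname{Frac}(R_j/\mathfrak p_j)$: the uniform bound $e_j \leq n$ coming from Cayley--Hamilton is precisely what allows the individual divisibility relations $r_j \mid e_j\, v(\bar z)$ to combine (as $r_j \to \infty$) and force $v(\bar z) = 0$.
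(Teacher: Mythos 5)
Your proposal is correct, but it takes a genuinely different route from the paper. The paper's proof is elementary and self-contained: it filters $M$ by the $A$-submodules $M_d=A[z]_{\leq d}M_0$, uses Noetherianity to show that multiplication by $z$ eventually induces isomorphisms $M_{d-1}/M_{d-2}\to M_{d}/M_{d-1}$, and, when the stable quotient $N$ is nonzero, localizes at a minimal prime occurring in a filtration of $N$ to obtain a finite length $\lambda$; comparing lengths of the filtrations generated by $z$ and by a root $w$ then rules out any $w$ with $w^r=z$ for $r>\lambda$, an explicit quantitative bound depending only on $M$. You instead prove the stronger structural fact that the image $\bar z$ of $z$ in $B=A[z]/\operatorname{Ann}_{A[z]}(M)\subseteq\End_A(M)$ is integral over the image of $A$, via Cayley--Hamilton (bounding $[\operatorname{Frac}(R_j/\mathfrak{p}_j):\operatorname{Frac}(B/\mathfrak{p})]\leq n$ uniformly in $j$), lying over, and the divisibility relation $e_j\,v(\bar z)\in r_j\ZZ$ with $e_j\leq n$, which forces $v(\bar z)=0$ as $r_j\to\infty$. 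This buys more information (the whole image of $A[z]$ in $\End_A(M)$ is module-finite over $A$) at the price of heavier standard inputs: the determinant trick, the fundamental inequality for extensions of valuations, and above all the valuative criterion for integrality using only \emph{discrete} valuations, which over a Noetherian base rests on the nontrivial fact that a Noetherian local domain is dominated by a DVR of its fraction field; note that this step is legitimate here precisely because $B/\mathfrak{p}$ is generated over the image of $A$ by the single element $\bar z$, so the usual $\bar z^{-1}$-trick (localize the Noetherian ring $A'[\bar z^{-1}]$ at a maximal ideal containing $\bar z^{-1}$ and dominate by a DVR) produces the needed discrete valuations inside $\operatorname{Frac}(B/\mathfrak{p})$ itself. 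Two small points you should make explicit: the degenerate case $\bar z\in\mathfrak{p}$ (where $v(\bar z)$ is undefined but integrality modulo $\mathfrak{p}$ is trivial), and why checking integrality modulo each minimal prime suffices (multiply the monic witnesses and take a power to absorb the nilpotent error). Finally, observe that your argument uses the full hypothesis $r_j\to\infty$ separately for each valuation $v$, whereas the paper extracts a single bound $r\leq\lambda$; both suffice for the theorem as stated.
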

\begin{proof}
Let $S\subset M$ be a finite $A[z]$-generating set. Let $M_0\subseteq M$
be the $A$-submodule generated by $S$. Define an increasing sequence of
$A$-submodules of $M$
$$M_0\subseteq M_1\subseteq\dots\subseteq M_d\subseteq\dots $$
by the condition that $M_d=M_{d-1}+zM_{d-1}$ for every positive integer
$d$. Define also
$M_d=0$ for negative integers $d$. For each $d$ the quotient $M_d/M_{d-1}$ is a finitely generated
$A$-module, because $M_d$ is a finitely generated $A$-module.

For any $d\geq 1$, multiplication
by $z$ gives a surjective morphism
$$\mu_d:M_{d-1}/M_{d-2}\to M_{d}/M_{d-1}.$$
Consider the composition
$$\nu_d=\mu_d\circ\dots\circ\mu_1:M_0\to M_d/M_{d-1},$$
and define $K_d=\Ker \nu_d$.
Each $K_d$ is an $A$-submodule
of $M_0$, and there are inclusions
$$K_0\subseteq K_1\subseteq K_2\subseteq\dots.$$
Since $A$ is Noetherian and $M_0$ is a finitely generated $A$-module, there exists
some $d_0$ such that $K_d=K_{d-1}$ for $d\geq d_0$.
If for some $d$ the morphism $\mu_d$ fails to be an isomorphism, then $\Ker\nu_d$
is strictly bigger than $\Ker\nu_{d-1}$, because $\nu_d=\mu_d\circ\nu_{d-1}$ and
$\nu_{d-1}$ is surjective. It follows that $\mu_d$ is an isomorphism for any
$d\geq d_0$.

Let $N=M_{d_0}/M_{d_0-1}$. If $N=0$ then $M=M_{d_0-1}$, so $M$ is finitely generated
as an $A$-module and we are done.

Suppose from now on that $N\neq 0$. Since $A$ is Noetherian and
$N$ is finitely generated, there exists
a filtration by $A$-submodules
\begin{equation}
\label{eq:filtration-N}
0=N_0\subset N_1\subset\dots\subset N_r=N
\end{equation}
in such a way that $N_j/N_{j-1}\simeq A/\plie_j$ for primes $\plie_1,\dots,\plie_r\in\Spec A$
(see \cite[Chap 7, Exercise 18]{AM} or \cite[Theorem 6.4]{Mat}).
Let $\plie$ be a minimal element of $\{\plie_1,\dots,\plie_r\}$. Denote as usual by
$A_{\plie}$ the localisation of $A$ at $\plie$ and by $k_{\plie}$ its residual field.
For any $A$-module $R$ we denote $R_{\plie}=R\otimes_A A_{\plie}$.
Since $A_{\plie}$ is a flat $A$-module, for any inclusion of $A$-modules
$R'\subseteq R$ we have $R_{\plie}/R_{\plie}'\simeq (R/R')_{\plie}$.
Since $\plie$ is a minimal element of $\{\plie_1,\dots,\plie_r\}$, for every $i$ we have
$$(A/\plie_i)_{\plie}\simeq\left\{\begin{array}{cc}
k_{\plie} & \qquad\text{if $\plie_i=\plie$,} \\
0 & \qquad \text{if $\plie_i\neq \plie$.}
\end{array}\right.$$
Hence, $(A/\plie_i)_{\plie}$ is a simple $A_{\plie}$-module for every $i$.
So if
we tensor by $A_{\plie}$ the elements of the filtration (\ref{eq:filtration-N})
and we ignore the resulting inclusions that are actually equalities, we get
a composition series for $N_{\plie}$ of length
$$\lambda:=|\{i\mid \plie_i=\plie\}|\geq 1$$
(see e.g. the paragraph before Proposition 6.7 in \cite{AM}).

To conclude the proof of the theorem we are going to prove that
there is no $A[z]$-module morphism $w:M\to M$ satisfying $w^r=z$
for any $r>\lambda$. Arguing by contradiction, let us assume that
there exists an $A[z]$-module morphism $w:M\to M$ satisfying $w^r=z$ for some
$r>\lambda$.

Let $M_0'=M_0$ and define recursively $M_{\delta}'$ for positive integers
$\delta$ as $M_{\delta}'=M_{\delta-1}+w M_{\delta-1}$. Define also $M_{\delta}'=0$
for negative integers $\delta$. The action of $w$ defines a surjective
$A$-module morphism $\mu'_{\delta}:M'_{\delta-1}/M'_{\delta-2}\to M'_{\delta}/M'_{\delta-1}$.
Arguing as we did for $M_d$, we prove the existence of some $\delta_0$ such that
$\mu'_{\delta}$ is an isomorphism for any $\delta\geq\delta_0$.
Let $N'=M'_{\delta_0}/M'_{\delta_0-1}$.

Denote by $A[z]_{\leq d}$ (resp. $A[w]_{\leq\delta}$) the $A$-module of
polynomials in $z$ (resp. $w$) of degree at most $d$ (resp. $\delta$).
We have
\begin{equation}
\label{eq:polynomials}
M_{d}=A[z]_{\leq d}M_0,\qquad M_{\delta}'=A[w]_{\leq\delta}M_0.
\end{equation}
Since $w^r=z$, we have
$A[z]_{\leq d}\subseteq A[w]_{\leq rd}$ for every $d$. This implies that
$$M_d\subseteq M'_{rd}$$
for every nonnegative $d$.
Suppose that $S=\{m_1,\dots,m_s\}$.
Since $m_1,\dots,m_s$ generate $M$ as an $A[z]$-module, there exist
polynomials $P_{ijk}\in A[z]$ for $i=1,\dots,k-1$ and $j,k=1,\dots,s$
such that
$$w^im_j=P_{ij1}m_1+\dots+P_{ijr}m_s.$$
Let $e=\max_{i,j,k}\deg P_{ijk}$. We have
$w^im_j\in M_e$ for every $i=1,\dots,k-1$ and $j=1,\dots,s$, and this implies that for
any $d$ we have $A[w]_{\leq\delta}M_0\subseteq A[z]_{\leq [\delta/r]+e}M_0$, or equivalently
$$M_{\delta}'\subseteq M_{[\delta/r]+e}.$$

Following \cite[Chap 6]{AM} (see the proof of \cite[Proposition 6.7]{AM}) for any
$A_{\plie}$-module $R$ we denote by $l(R)$ the length of $R$.
This is defined to be $\infty$ if $R$ has no composition series of finite length and
it is equal to $n$ if $R$ has a composition series of lenght $n$. This is well defined
by \cite[Proposition 6.7]{AM}. Furthermore, for any inclusion of $A_{\plie}$-modules
$R\subseteq R'$ we have $l(R')=l(R)+l(R'/R)$ (see \cite[Proposition 6.9]{AM}).
For example, we have $l(N_{\plie})=\lambda$, and if $d\geq d_0$ then
$l((M_{d+k})_{\plie}/(M_d)_{\plie})=k\lambda$ for every $k$.

To simplify our notation we will denote
$M_{i,\plie}=(M_i)_{\plie}$ and $M'_{i,\plie}=(M'_i)_{\plie}$ for every $i$.
Fix some value of $d$ satisfies both $d\geq d_0$ and $[d/r]\geq\delta_0$.
Let $k$ be a big number, to be specified later. The inclusions
$M_{d+k,\plie}\subseteq M'_{r(d+k),\plie}\subseteq M_{d+k+e,\plie}$
imply
\begin{align}
0\leq l(M_{d+k+e,\plie}/M'_{r(d+k),\plie}) &=l(M_{d+k+e,\plie}/M_{d+k,\plie})-l(M'_{r(d+k),\plie}/M_{d+k,\plie}) \notag \\
&\leq l(M_{d+k+e,\plie}/M_{d+k,\plie})=e\lambda. \label{eq:cota-M'}
\end{align}
Using the additivity of $l$ and the filtration
$$M_{d,\plie}=M'_{rd,\plie}\subseteq M'_{rd+1,\plie}\subseteq\dots\subseteq
M'_{r(d+k),\plie}\subseteq M_{d+k+e,\plie}$$
we have
\begin{align*}
(k+e)\lambda=l(M_{d+k+e,\plie}/M_{d,\plie})&=l(M'_{r(d+k),\plie}/M'_{rd,\plie})+l(M_{d+k+e,\plie}/M'_{r(d+k),\plie}) \\
&=rk\,l(N'_{\plie})+l(M_{d+k+e,\plie}/M'_{r(d+k),\plie})
\end{align*}
and using (\ref{eq:cota-M'}) we have
$$\frac{\lambda}{r}=\frac{(k+e)\lambda-e\lambda}{rk}\leq l(N'_{\plie})\leq \frac{(k+e)\lambda}{rk}.$$
The lower bound for $l(N'_{\plie})$ belongs to the interval $(0,1)$, and if $k$ is
big enough so that $(k+e)/k<r/\lambda$ then the upper bound for $l(N'_{\plie})$ also
belongs to $(0,1)$. This contradicts the fact that $l(N'_{\plie})$ is an integer,
so the proof that $r$ cannot be bigger than $\lambda$ is now finished.
\end{proof}

\begin{corollary}
\label{cor:finitely-generated}
Let $M$ be a finitely generated module over $A:=\ZZ[t_1^{\pm 1},\dots,t_n^{\pm 1}]$.
Suppose that for every $1\leq i\leq n$ there exists a sequence of integers $(r_{i,j})_j$ satisfying
$r_{i,j}\to\infty$ as $j\to\infty$, and $A$-module automorphism $w_{i,j}:M\to M$
such that $w_{i,j}^{r_{i,j}}$ coincides with multiplication
by $t_i$. Then $M$ is finitely generated as a $\ZZ$-module.
\end{corollary}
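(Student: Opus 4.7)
The plan is to prove the corollary by induction on $n$, where each inductive step applies Theorem \ref{thm:finitely-generated} twice in order to eliminate the $n$-th Laurent variable. The base case $n=0$ is immediate, since then $A=\ZZ$ and $M$ is finitely generated over $\ZZ$ by assumption.

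For the inductive step, I would assume the result holds for $n-1$ variables and let $M$ be a finitely generated module over $A=\ZZ[t_1^{\pm 1},\dots,t_n^{\pm 1}]$ satisfying the hypothesis of the corollary. Set $B=\ZZ[t_1^{\pm 1},\dots,t_{n-1}^{\pm 1},t_n^{-1}]$, which is Noetherian as a polynomial ring in the single variable $t_n^{-1}$ over the Noetherian Laurent polynomial ring $\ZZ[t_1^{\pm 1},\dots,t_{n-1}^{\pm 1}]$. Then $B[t_n]=A$, so $M$ is finitely generated over $B[t_n]$; the automorphisms $w_{n,j}$ are $A$-linear (hence $B[t_n]$-linear) and satisfy $w_{n,j}^{r_{n,j}}=t_n$ with $r_{n,j}\to\infty$. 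Theorem \ref{thm:finitely-generated} applied with $z=t_n$ then yields that $M$ is finitely generated over $B$.

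Next, set $B'=\ZZ[t_1^{\pm 1},\dots,t_{n-1}^{\pm 1}]$, so that $B'[t_n^{-1}]=B$; then $M$ is finitely generated over $B'[t_n^{-1}]$. The inverses $w_{n,j}^{-1}$ exist by assumption, are $A$-linear (hence $B'[t_n^{-1}]$-linear), and satisfy $(w_{n,j}^{-1})^{r_{n,j}}=(w_{n,j}^{r_{n,j}})^{-1}=t_n^{-1}$. A second application of Theorem \ref{thm:finitely-generated}, this time with $z=t_n^{-1}$, gives that $M$ is finitely generated over $B'$. The automorphisms $w_{i,j}$ for $1\le i\le n-1$ remain $B'$-linear and still satisfy $w_{i,j}^{r_{i,j}}=t_i$ with $r_{i,j}\to\infty$, so the inductive hypothesis applies to $M$ as a $B'$-module and yields that $M$ is finitely generated over $\ZZ$.

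The main conceptual point---and the only nontrivial one---is that Theorem \ref{thm:finitely-generated} is stated for polynomial rings $A[z]$ and not for Laurent polynomial rings, so a direct application that eliminates the Laurent variable $t_n^{\pm 1}$ in one shot is not available. The invertibility of $w_{n,j}$, which is built into the corollary's hypothesis but is not assumed in Theorem \ref{thm:finitely-generated}, is precisely what makes the double application possible: first one peels off $t_n$ by viewing $A$ as $B[t_n]$, and then one peels off $t_n^{-1}$ from the intermediate Noetherian ring $B=B'[t_n^{-1}]$ using the inverse automorphisms.
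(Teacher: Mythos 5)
Your proof is correct and is essentially the paper's argument: the paper regards $M$ as a finitely generated module over the polynomial ring $\ZZ[z_1,\dots,z_{2n}]$ mapping onto $A$ via $z_{2i-1}\mapsto t_i$, $z_{2i}\mapsto t_i^{-1}$, and eliminates the $2n$ variables one at a time by Theorem \ref{thm:finitely-generated}, using the $w_{i,j}$ for $t_i$ and the inverses $w_{i,j}^{-1}$ for $t_i^{-1}$, exactly as you do (with the same understanding that ``$B[t_n]=A$'' means $A$ is the image of the polynomial ring under $z\mapsto t_n$). Organizing this as an induction on $n$ with two applications of the theorem per step, rather than a single descending induction over the $2n$ polynomial variables, is only a cosmetic difference.
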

\begin{proof}
Let $B=\ZZ[z_1,\dots,z_{2n}]$ and let $\phi:B\to A$ be the morphism of rings defined
by $\phi(z_{2i-1})=t_i$ and $\phi(z_{2i})=t_i^{-1}$. We can look at
$M$ as a finitely generated $B$-module via $\phi$, and the automorphisms $w_{i,j}$
in the statement are $B$-module automorphisms satisfying
$$w_{i,j}^{r_{i,j}}=\text{multiplication by $z_{2i-1}$},\qquad
(w_{i,j}^{-1})^{r_{i,j}}=\text{multiplication by $z_{2i}$}.$$
Let $B_0=\ZZ$ and $B_j=\ZZ[z_1,\dots,z_j]$ for $1\leq j\leq 2n$. Then $B_{2n}=B$,
and we can prove that $M$ is finitely generated as a $B_j$-module for any $0\leq j\leq 2n$
using descending induction on $j$, applying Theorem \ref{thm:finitely-generated}
in the induction step. It follows that $M$ is finitely generated as a $B_0$-module.
\end{proof}

\begin{corollary}
\label{cor:finitely-generated-plus}
Let $M$ be a finitely generated module over $A:=\ZZ[t_1^{\pm 1},\dots,t_n^{\pm 1}]$.
Suppose that for every $1\leq i\leq n$ there exists a nonzero integer $d_i$,
a sequence of integers $(r_{i,j})_j$ satisfying $r_{i,j}\to\infty$ as $j\to\infty$,
and $A$-module automorphisms $w_{i,j}:M\to M$ such that $w_{i,j}^{r_{i,j}}$ coincides with multiplication
by $t_i^{d_i}$. Then $M$ is finitely generated as a $\ZZ$-module.
\end{corollary}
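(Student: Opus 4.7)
The plan is to reduce the statement directly to Corollary \ref{cor:finitely-generated} by a change of base ring, trading the larger ring $A$ for a smaller Laurent polynomial ring over which the hypothesis of that corollary holds verbatim.

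First, by replacing $w_{i,j}$ by its inverse $w_{i,j}^{-1}$ whenever $d_i<0$, I may assume that each $d_i$ is a positive integer: the equation $w_{i,j}^{r_{i,j}}=t_i^{d_i}$ holds in $\Aut_A M$ with $w_{i,j}$ invertible, so $(w_{i,j}^{-1})^{r_{i,j}}=t_i^{-d_i}$. Next I set $u_i:=t_i^{d_i}$ and introduce the subring
\[
A':=\ZZ[u_1^{\pm 1},\dots,u_n^{\pm 1}]\subseteq A.
\]
The monomials $t_1^{a_1}\cdots t_n^{a_n}$ with $0\le a_i<d_i$ form a free $A'$-basis of $A$, so $A$ is a free $A'$-module of rank $d_1\cdots d_n$. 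Consequently, if $S\subset M$ is a finite $A$-generating set, then multiplying $S$ by this monomial basis produces a finite $A'$-generating set of $M$; in particular $M$ is finitely generated as an $A'$-module.

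Every $A$-linear endomorphism of $M$ is a fortiori $A'$-linear, so each $w_{i,j}$ is an $A'$-module automorphism of $M$. The hypothesis $w_{i,j}^{r_{i,j}}=t_i^{d_i}$ now reads $w_{i,j}^{r_{i,j}}=u_i$, which is exactly the assumption of Corollary \ref{cor:finitely-generated} with respect to the variables $u_1,\dots,u_n$ of $A'$ and the sequences $(r_{i,j})_j$ tending to infinity. Applying Corollary \ref{cor:finitely-generated} to $M$ viewed as an $A'$-module then yields that $M$ is finitely generated as a $\ZZ$-module, which is the desired conclusion.

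I do not foresee a real obstacle here: the entire content of the proof is the observation that the exponents $d_i$ can be absorbed into the ring by passing to the subring generated by $u_i=t_i^{\pm d_i}$. The two facts one must check are elementary, namely that $A$ is finitely (in fact freely) generated over $A'$ so that finite generation of $M$ descends from $A$ to $A'$, and that the relation $w_{i,j}^{r_{i,j}}=t_i^{d_i}$ becomes the relation required by Corollary \ref{cor:finitely-generated} once $t_i^{d_i}$ is renamed $u_i$.
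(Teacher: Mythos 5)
Your proof is correct and follows essentially the same route as the paper: pass to the subring $A'=\ZZ[t_1^{\pm d_1},\dots,t_n^{\pm d_n}]$, observe that $M$ remains finitely generated over $A'$ via the monomials $t_1^{a_1}\cdots t_n^{a_n}$ with $0\le a_i<|d_i|$, and apply Corollary \ref{cor:finitely-generated} to $M$ as an $A'$-module. Your preliminary replacement of $w_{i,j}$ by $w_{i,j}^{-1}$ when $d_i<0$ is harmless but not needed, since the relation $w_{i,j}^{r_{i,j}}=t_i^{d_i}$ already identifies the correct generator of $A'$ regardless of the sign of $d_i$.
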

\begin{proof}
Suppose that $M$ is generated as an $A$ module by $s_1,\dots,s_r\in M$.
Let $A'=\ZZ[t_1^{\pm d_1},\dots,t_1^{\pm d_n}]$. Then
$M$ is generated as an $A'$-module by the set
$$\{t_1^{a_1}t_2^{a_2}\dots t_n^{a_n}s_i\mid 1\leq i\leq r,\,0\leq a_i\leq |d_i|-1\text{ for each $i$}\},$$
so $M$ is finitely generated as an $A'$-module. Applying Corollary \ref{cor:finitely-generated}
to $M$ viewed as an $A'$-module, we conclude that $M$ is a finitely generated $\ZZ$-module.
\end{proof}

\section{Coverings and discrete degree of symmetry}
\label{s:coverings}

\subsection{Proof of Theorem \ref{thm:discsym-covering}}
\label{s:proof-thm:discsym-covering}
Let $X'\to X$ be a covering, where $X$ is a closed and connected
manifold. It suffices to prove
that for every natural number $b$ there is
a constant $C$, depending on $X'\to X$, such that, for every natural number $a$, if
$\Gamma_{a,b}$ acts effectively on $X$ then there is a natural
number $a'$ satisfying $Ca'\geq a$ and an effective action of
$\Gamma_{a',b}$ on $X'$.

Let $k$ be the degree of $X'\to X$.
Assume that $\Gamma:=\Gamma_{a,b}$ acts
effectively on $X$. Arguing as in \cite[\S 2.3]{M1} it follows
that there is a subgroup
$\Gamma_0\leq\Gamma$ satisfying $[\Gamma:\Gamma_0]\leq C_0$,
where $C_0$ only depends on $X$ and $k$, and an exact sequence
$$1\to F\to \Gamma_0'\stackrel{\pi}{\longrightarrow}\Gamma_0\to 1$$
where $|F|\leq k!$ and $\Gamma_0'$ acts effectively on $X'$.
By Lemma \ref{lemma:subgroup-isomorphic-to-Gamma-a-b} there is a subgroup $\Gamma_1\leq\Gamma_0$
isomorphic to $\Gamma_{a_1,b}$ for some integer $a_1$ satisfying $C_0!a_1\geq a$.
Let $\Gamma_1'=\pi^{-1}(\Gamma_1)$. By Lemma \ref{lemma:extension-subgroup-Gamma-a-b}
there is a subgroup $\Gamma''\leq\Gamma_1'$ isomorphic to $\Gamma_{a',b}$ for some
natural number $a'$ satisfying $C'a'\geq a_1$, where $C'$ only depends on $k$
(through the bound $|F|\leq k!$) and $b$. Setting $C=C_0!C'$ we have $Ca'\geq a$.
Since $\Gamma_0'$ acts effectively on $X'$, so does $\Gamma''$,
so the proof is complete.

\subsection{Proof of Theorem \ref{thm:strict-inequality}}
\label{s:strict-inequality}
Let $\Aff_{\ZZ^n}\RR^n$ denote the group of affine transformations
of $\RR^n$ that send the lattice $\ZZ^n$ to some translate of itself.
%There an exact sequence
%$$0\to\RR^n\to\Aff_{\ZZ^n}\RR^n\to\GL(n,\ZZ)\to 1.$$
%The transformations in $\Aff_{\ZZ^n}\RR^n$ descend to give affine diffeomorphisms of $T^n$.
The action of $\Aff_{\ZZ^n}\RR^n$ on $\RR^n$ descends to an action on $\RR^n/\ZZ^n=T^n$.
Denote the resulting group of transformations of $T^n$ as $\Aff T^n$.
%This coincides, as the notation suggests, with the group of affine transformations of $T^n$.
There is an exact sequence
$$0\to T^n\stackrel{\tau}{\longrightarrow}\Aff T^n\stackrel{\mu}{\longrightarrow}\GL(n,\ZZ)\to 1,$$
where $\tau$ sends $a\in T^n$ to the translation $b\mapsto a+b$.
%Using the identification $T^n=\RR^n/\ZZ^n$ we may naturally identify $H_1(T^n;\ZZ)\simeq\ZZ^n$.
The morphism $\sigma:\GL(n,\ZZ)\to\Aff T^n$ induced by the action of $\GL(n,\ZZ)$ on $\RR^n$
is a section of $\mu$.
%satisfies $\mu\circ\sigma=\Id_{\GL(n,\ZZ)}$.
If $A\in\GL(n,\ZZ)$, the action of $\sigma(A)$
on $H_1(T^n;\ZZ)$ coincides, via the natural isomorphism $H_1(T^n;\ZZ)\simeq\ZZ^n$, with $A$.
The next lemma follows from \cite[Corollary to Lemma 1]{LeeRaymond}
and \cite[Theorem 3]{LeeRaymond}.

\begin{lemma}
\label{lemma:Gamma-affine-action}
Suppose that a finite group $\Gamma$ acts effectively on $T^n$.
Let $\rho:\Gamma\to\GL(n,\ZZ)$ be the morphism given by the action of $\Gamma$ on $H_1(T^n;\ZZ)\simeq\ZZ^n$.
Then there is an embedding of groups $\eta:\Gamma\hookrightarrow\Aff T^n$
such that $\mu\circ\eta=\rho$.
%Using the isomorphism $H_1(T^n;\ZZ)\simeq\ZZ^n$ the
%action of $\Gamma$ on $H_1(T^n;\ZZ)$ gives a morphism $\rho:\Gamma\to\GL(n,\ZZ)$.
%Then there is an embedding of groups $\eta:\Gamma\hookrightarrow\Aff T^n$
%such that $\mu\circ\eta=\rho$.
\end{lemma}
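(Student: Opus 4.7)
The plan is to invoke two results of Lee and Raymond from \cite{LeeRaymond}. Their Theorem~3 asserts that every effective continuous action of a finite group on $T^n$ is topologically conjugate to an action by affine diffeomorphisms. Applied to the given action of $\Gamma$ this produces a homeomorphism $h:T^n\to T^n$ and an injective homomorphism $\eta':\Gamma\hookrightarrow\Aff T^n$ such that the action of each $\gamma\in\Gamma$ equals $h^{-1}\circ\eta'(\gamma)\circ h$.

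Next I would use the Corollary to Lemma~1 of \cite{LeeRaymond}, which (together with the very definition of $\mu$) identifies the action on $H_1(T^n;\ZZ)$ of any $\alpha\in\Aff T^n$ with $\mu(\alpha)\in\GL(n,\ZZ)$. By naturality of $H_1$ applied to the conjugation $\gamma=h^{-1}\eta'(\gamma)h$ this yields
$$\mu\circ\eta'\;=\;h_*\circ\rho\circ h_*^{-1},$$
where $h_*\in\GL(n,\ZZ)$ is the automorphism induced by $h$ on $H_1(T^n;\ZZ)\simeq\ZZ^n$. To obtain the equality $\mu\circ\eta=\rho$ on the nose, it is enough to conjugate inside $\Aff T^n$ using the splitting $\sigma:\GL(n,\ZZ)\to\Aff T^n$: set
$$\eta(\gamma)\;:=\;\sigma(h_*)^{-1}\,\eta'(\gamma)\,\sigma(h_*).$$
Since $\mu\circ\sigma=\Id_{\GL(n,\ZZ)}$ and $\mu$ is a group homomorphism, applying $\mu$ to this definition gives $\mu(\eta(\gamma))=h_*^{-1}\bigl(\mu(\eta'(\gamma))\bigr)h_*=\rho(\gamma)$, as wanted. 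The map $\eta$ is still an injective homomorphism because it is an inner conjugate of $\eta'$.

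The main obstacle, on which everything rests, is Theorem~3 of \cite{LeeRaymond}; its proof relies on Bieberbach-type rigidity of flat manifolds applied on the universal cover $\RR^n$, together with classical surgery and smoothing input in higher dimensions. Once that result is taken as a black box, the remainder of the proof is just the functoriality of $H_1$ and the conjugation trick described above.
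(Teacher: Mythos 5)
Your argument hinges on the assertion that \cite[Theorem 3]{LeeRaymond} says every effective continuous finite group action on $T^n$ is topologically conjugate to an affine action. That statement is false, and it is not what Lee and Raymond prove. There exist effective finite actions on tori that are not conjugate to any affine action: an affine action is smooth and locally linear, while one can have topological actions that are not even locally linear, and in high dimensions there are involutions of $T^n$ with fixed points which are homotopic to a linear involution but not topologically conjugate to it --- precisely the ``exotic'' involutions $h_{T'}$ used in Section \ref{s:CWY} of this paper (see \cite{BW,CWY}); note that an affine involution with a fixed point is conjugate by a translation to a linear one, so ``not conjugate to a linear action'' already rules out conjugacy to an affine action. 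Consequently the homeomorphism $h$ in your first step does not exist in general, and the rest of your argument (the naturality computation and the conjugation by $\sigma(h_*)$, which are fine as algebra) has nothing to stand on.

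What \cite{LeeRaymond} actually provides, and what the paper invokes, is weaker and is exactly the content of the lemma: the \emph{abstract kernel} of the action can be realized affinely, i.e.\ there is an effective affine action inducing the same homomorphism $\rho:\Gamma\to\GL(n,\ZZ)$ on $H_1(T^n;\ZZ)$, with no claim that this affine action is conjugate (or otherwise related pointwise) to the original one. Concretely, one lifts the action to $\RR^n$ to obtain an extension $1\to\ZZ^n\to E\to\Gamma\to 1$ with $\Gamma$ acting on $\ZZ^n$ via $\rho$; since $H^2(\Gamma;\RR^n_\rho)=0$ for $\Gamma$ finite, pushing the extension class into $\RR^n$-coefficients splits it and yields a homomorphism $\eta:\Gamma\to(\RR^n/\ZZ^n)\rtimes\GL(n,\ZZ)=\Aff T^n$ with $\mu\circ\eta=\rho$; the injectivity of $\eta$ is where \cite[Corollary to Lemma 1]{LeeRaymond} enters, as it controls the kernel of $\rho$ for an effective action on an aspherical manifold (such elements act essentially by translations). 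So the missing idea in your proposal is the extension-theoretic/group-cohomological realization of $\rho$; replacing your conjugacy claim by this realization argument (or simply by the correctly quoted results of Lee--Raymond) repairs the proof.
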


Fix natural numbers $k,n$ satisfying $1\leq k\leq n-1$. Recall that
$\sigma\in\Aff T^n$ is the involution defined by
$\sigma(x_1,\dots,x_n)=(x_1+1/2,\dots,x_{k}+1/2,-x_{k+1},\dots,-x_n),$
and that
$X'=T^n$, $X=T^n/\sigma$ and $\rho:X'\to X$ denotes the projection.

Since $\Gamma_{r,n}$ acts effectively on $X'$ for every $r$, we have $\discsym X'\geq n$.
In Section \ref{s:consequences-thm:linearising-action} we proved that $\discsym X'\leq n$,
so $\discsym X'=n$.

The action of $\Gamma_{r,k}$ on $T^n$ given by
$$(\ov{a_1},\dots,\ov{a_k})\cdot (x_1,\dots,x_n)=(x_1+a_1/r,\dots,x_k+a_k/r,a_{k+1},\dots,a_n)$$
commutes with $\sigma$, and hence defines an action of $\Gamma_{r,k}$ on $X$. This action
is effective if $r$ is odd, and hence $\discsym X\geq k$.
Let us prove that $\discsym X\leq k$.
Let $T_{\sigma}^n=\{x\in T^n\mid \tau(x)\sigma=\sigma\tau(x)\}$.
Let $\iota:T^k\to T_{\sigma}^n$ be
$\iota((x_1,\dots,x_k))=(x_1,\dots,x_k,0,\dots,0)$.
Since $(x_1,\dots,x_n)\in T^n$ belongs to
$T_{\sigma}^n$ if and only if $2x_i=0$ for every $i\geq k+1$, we have
$T_{\sigma}^n/\iota(T^k)\simeq\Gamma_{2,n-k}$.
Hence, there is a short exact sequence of the form
\begin{equation}
\label{eq:Gamma-2-n-k}
0\to T^k\stackrel{\iota}{\longrightarrow}T_{\sigma}^n\to \Gamma_{2,n-k}\to 0.
\end{equation}
Suppose that $\Gamma_{r,m}$ acts effectively
on $X$. The arguments in the proof of Theorem \ref{thm:discsym-covering} imply the existence
of a constant $C'$ depending only on $k$ and $n$, a subgroup
$\Gamma_0\leq\Gamma_{r,m}$ satisfying $[\Gamma_{r,m}:\Gamma_0]\leq C'$, and a central extension
of groups
$$1\to Z\to \Gamma_0'\to\Gamma_0\to 1$$
such that $\Gamma_0'$ acts effectively on $T^n$, and $Z=\{\Id,\sigma\}$.
So the order of every element of $\Gamma_0'$ is smaller than or equal to $2r$.
Let $\rho:\Gamma_0'\to\GL(n,\ZZ)$
be the morphism induced by the action of $\Gamma_0'$ on $H_1(T^n;\ZZ)\simeq\ZZ^n$.
By Lemma \ref{lemma:Gamma-affine-action} there is a monomorphism
$\eta:\Gamma_0'\to\Aff T^n$ satisfying $\rho=\mu\circ\eta$.
By Theorem \ref{thm:Minkowski}
$\Gamma_0'':=\Ker \rho$ satisfies
$[\Gamma_0':\Gamma_0'']\leq C$ for some $C$ depending only on $n$.
Then $\eta(\Gamma_0'')\leq \tau(T_{\sigma}^n)$, so by (\ref{eq:Gamma-2-n-k})
there is a subgroup $\Gamma_0'''\leq\Gamma_0''$ satisfying $[\Gamma_0'':\Gamma_0''']\leq 2^{n-k}$
and an embedding $\Gamma_0'''\hookrightarrow T^k$. By
Lemma \ref{lemma:upper-bound-subgroup-T-d} we have $|\Gamma_0'''|\leq (2r)^k$ because $\Gamma_0'''\leq\Gamma_0'$, so
$$r^m=|\Gamma_{r,m}|\leq C'|\Gamma_0|=
\frac{C'}{2}|\Gamma_0'|\leq \frac{C'}{2}C2^{n-k}(2r)^k=C'C2^{n-1}r^k.$$
Consequently, if $r>C'C2^{n-1}$ then $m\leq k$.

\section{Finite generation of the homology of abelian covers}
\label{s:cohomology-finitely-generated}

Denote by $\pi:\RR^k\to T^k=\RR^k/\ZZ^k$ the quotient map.
Given a topological space $X$
and a continuous map $\phi:X\to T^k$ we denote by
$$X_{\phi}=\{(x,u)\in X\times\RR^k\mid \phi(x)=\pi(u)\}$$
the pullback to $X$ of the covering $\RR^k\to T^k$. The projection
$$\rho_{\phi}:X_{\phi}\to X,\qquad \rho_{\phi}(x,u)=x$$
is an unramified covering map. We can also look at $\rho_{\phi}:X_{\phi}\to X$
as a principal $\ZZ^k$-bundle, where $\ZZ^k$ acts on $X_{\phi}$
as follows: if $\nu\in \ZZ^k$ and $(x,u)\in X_{\phi}$
then $\nu\cdot (x,s)=(x,u+\nu)$.
Hence, $X_{\phi}$ is an abelian cover.
Standard results on fiber bundles imply the following.

\begin{lemma}
\label{lemma:homotopic-bundles-isomorphic}
Suppose that $X$ is paracompact.
If two continuous maps $\phi,\psi:X\to T^k$ are homotopic then there
is a $\ZZ^k$-equivariant homeomorphism $\zeta:X_{\phi}\to X_{\psi}$
such that $\rho_{\phi}=\rho_{\psi}\circ\zeta$.
\end{lemma}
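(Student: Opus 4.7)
The plan is to construct $\zeta$ by lifting a homotopy through the covering $\pi\colon\RR^k\to T^k$. Since $\pi$ is a covering map, it has the homotopy lifting property with respect to every space (see e.g. Hatcher, Proposition 1.30). Concretely, I would pick a homotopy $H\colon X\times[0,1]\to T^k$ with $H(\cdot,0)=\phi$ and $H(\cdot,1)=\psi$, and define a continuous map $\tilde H_0\colon X_\phi\to\RR^k$ by $\tilde H_0(x,u)=u$. This is a lift through $\pi$ of the composition $X_\phi\xrightarrow{\rho_\phi} X\xrightarrow{\phi} T^k$.

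Next I would apply unique homotopy lifting to the composed homotopy $X_\phi\times[0,1]\xrightarrow{\rho_\phi\times\Id} X\times[0,1]\xrightarrow{H}T^k$ with initial lift $\tilde H_0$, obtaining a unique continuous map $\tilde H\colon X_\phi\times[0,1]\to\RR^k$ satisfying $\pi\circ\tilde H(\xi,t)=H(\rho_\phi(\xi),t)$ and $\tilde H(\cdot,0)=\tilde H_0$. I then define
\[
\zeta\colon X_\phi\to X_\psi,\qquad \zeta(x,u)=(x,\tilde H((x,u),1)).
\]
The identity $\pi(\tilde H((x,u),1))=H(x,1)=\psi(x)$ shows that $\zeta$ lands in $X_\psi$, continuity is inherited from $\tilde H$, and the equality $\rho_\phi=\rho_\psi\circ\zeta$ is immediate because $\zeta$ preserves the first coordinate.

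For $\ZZ^k$-equivariance I would invoke uniqueness of homotopy lifts: for any $\nu\in\ZZ^k$, the map $(\xi,t)\mapsto\tilde H(\xi,t)+\nu$ is also a lift of $H\circ(\rho_\phi\times\Id)$ through $\pi$, and at $t=0$ it sends $(x,u)$ to $u+\nu$. By uniqueness it coincides with the lift produced from $\nu\cdot\tilde H_0$, so that $\tilde H((x,u+\nu),1)=\tilde H((x,u),1)+\nu$, i.e.\ $\zeta(\nu\cdot(x,u))=\nu\cdot\zeta(x,u)$. To show $\zeta$ is a homeomorphism, I would repeat the construction for the reversed homotopy $H'(x,t)=H(x,1-t)$ to obtain $\zeta'\colon X_\psi\to X_\phi$; concatenating the lifts $\tilde H$ and $\tilde H'$ and using uniqueness of homotopy lifts (for the constant homotopy) yields $\zeta'\circ\zeta=\Id_{X_\phi}$ and symmetrically $\zeta\circ\zeta'=\Id_{X_\psi}$.

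The argument is essentially routine once one has unique homotopy lifting for coverings at hand. There is no real obstacle: the paracompactness hypothesis stated in the lemma is not needed for this proof (covering-space homotopy lifting holds with no hypothesis on the total space of the homotopy), although it would be required were one to argue more abstractly via classifying spaces or general fiber-bundle pullbacks.
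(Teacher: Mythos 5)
Your proof is correct, but it proceeds differently from the paper, which offers no explicit argument at all: the lemma is stated as a consequence of ``standard results on fiber bundles,'' i.e.\ the homotopy invariance of pullbacks of (principal) bundles over a paracompact base, which is where the paracompactness hypothesis comes from. You instead exploit the fact that $\pi\colon\RR^k\to T^k$ is a covering map, so it has the homotopy lifting property with respect to \emph{all} spaces and unique path lifting; building $\zeta$ from the lift $\widetilde H$ with initial condition $\widetilde H_0(x,u)=u$, and getting equivariance and invertibility from uniqueness of lifts on the slices $\{\xi\}\times[0,1]$, is a clean, self-contained argument that indeed makes the paracompactness hypothesis superfluous for this particular bundle --- a genuine (if small) improvement, and harmless to the paper since there $X$ is always a compact manifold or an open subset of $\RR^N$. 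One wording quibble: in the invertibility step, the concatenated homotopy $H\ast H'$ is not the constant homotopy, so ``uniqueness for the constant homotopy'' is not literally what you use; the correct phrasing is that, by unique path lifting, the lift of the reversed path $t\mapsto H(x,1-t)$ starting at $\widetilde H((x,u),1)$ is $t\mapsto\widetilde H((x,u),1-t)$, which gives $\zeta'\circ\zeta=\Id_{X_\phi}$ directly (and symmetrically for $\zeta\circ\zeta'$). With that adjustment the argument is complete.
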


We identify the group ring $\ZZ[\ZZ^k]$ with the additive group
of finitely supported functions $\ZZ^k\to\ZZ$ with ring structure given by convolution.
Let $e_1,\dots,e_k$ denote the canonical basis of $\ZZ^k$,
and let $t_i\in\ZZ[\ZZ^k]$ denote the characteristic function of $\{e_i\}\subset\ZZ^k$.
Then $\ZZ[\ZZ^k]\simeq \ZZ[t_1^{\pm 1},\dots,t_k^{\pm 1}]$.
The action of $\ZZ^k$ on $X_{\phi}$ induces an action on $H_*(X_{\phi};\ZZ)$
or, equivalently, a structure on $H_*(X_{\phi};\ZZ)$
of module over the group ring $\ZZ[\ZZ^k]\simeq\ZZ[t_1^{\pm 1},\dots,t_k^{\pm 1}]$.

\begin{lemma}
\label{lemma:finite-generated-cohomology}
Let $X$ be a closed topological manifold, and let $\phi:X\to T^k$ be a
continuous map. Then $H_*(X_{\phi};\ZZ)$ is finite generated
as a $\ZZ[t_1^{\pm 1},\dots,t_k^{\pm 1}]$-module.
\end{lemma}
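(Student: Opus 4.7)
The plan is to reduce to a finite CW complex model and then use that the chain complex of the associated $\ZZ^k$-cover is a finitely generated complex over a Noetherian ring.

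First, I would invoke the fact that any closed topological manifold $X$ is an ANR (being locally Euclidean, hence locally ANR, and compact), so by West's theorem $X$ has the homotopy type of a finite CW complex. Pick a homotopy equivalence $f\colon K\to X$ with $K$ a finite CW complex. Form the pullback covering $K_{f^*\phi}\to K$ of $\rho_\phi\colon X_\phi\to X$; equivalently, this is the $\ZZ^k$-covering associated to the composition $\phi\circ f\colon K\to T^k$. Since $f$ is a homotopy equivalence of paracompact spaces and, by Lemma \ref{lemma:homotopic-bundles-isomorphic}, pullback along homotopic maps produces $\ZZ^k$-equivariantly homeomorphic covers, one gets a $\ZZ^k$-equivariant homotopy equivalence $K_{f^*\phi}\simeq X_\phi$. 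In particular, $H_*(X_\phi;\ZZ)\cong H_*(K_{f^*\phi};\ZZ)$ as $\ZZ[\ZZ^k]$-modules.

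Next, I would exploit the CW structure. Since $K_{f^*\phi}\to K$ is a regular covering with deck group $\ZZ^k$, the CW structure of $K$ lifts to a $\ZZ^k$-equivariant CW structure on $K_{f^*\phi}$ in which each $i$-cell of $K$ lifts to a free $\ZZ^k$-orbit of $i$-cells. Consequently, the cellular chain complex $C_*^{\text{cell}}(K_{f^*\phi};\ZZ)$ is a chain complex of free $\ZZ[\ZZ^k]$-modules, with $C_i^{\text{cell}}(K_{f^*\phi};\ZZ)$ of rank equal to the (finite) number of $i$-cells of $K$. Thus each chain group is a finitely generated $\ZZ[\ZZ^k]$-module.

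Finally, by the Hilbert basis theorem applied twice (once to pass from $\ZZ$ to the polynomial ring, then localize at the multiplicative set generated by the variables), the Laurent polynomial ring $\ZZ[t_1^{\pm 1},\dots,t_k^{\pm 1}]\cong\ZZ[\ZZ^k]$ is Noetherian. Hence every subquotient of a finitely generated module over this ring is finitely generated, and in particular $H_*(K_{f^*\phi};\ZZ)$ is a finitely generated $\ZZ[t_1^{\pm 1},\dots,t_k^{\pm 1}]$-module, which via the identification above yields the claim for $H_*(X_\phi;\ZZ)$.

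The main obstacle is the initial step: ensuring that $X$ has the homotopy type of a finite CW complex. In the smooth or PL category this is immediate, but in the topological category one needs a nontrivial input (West's theorem that compact ANRs have the homotopy type of finite CW complexes, or alternatively the fact that compact topological manifolds are finitely dominated, which is all one really needs since a finitely dominated space still has chain complex chain-equivalent to a finitely generated projective complex over $\ZZ[\pi]$). Everything else is formal once a finite (or finitely dominated) model is available.
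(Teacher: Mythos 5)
Your argument is correct, but it follows a genuinely different route from the paper. The paper never passes to a CW model: it uses only that a compact topological manifold is an ENR, embeds $X$ in $\RR^N$ with a retraction $r:\oO\to X$, covers $X$ by finitely many balls $B_1,\dots,B_s$ with union $Y$, and proves by a Mayer--Vietoris induction (over $B_{\leq j}=B_1\cup\dots\cup B_j$, using that each $(B_i)_{\psi}$ is a trivial $\ZZ^k$-bundle over a contractible set and that $A=\ZZ[t_1^{\pm1},\dots,t_k^{\pm1}]$ is Noetherian) that $H_*(Y_{\psi};\ZZ)$ is a finitely generated $A$-module; the retraction then exhibits $H_*(X_\phi;\ZZ)$ as an $A$-submodule, hence finitely generated. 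This is, in effect, a hands-on implementation of the ``finitely dominated'' alternative you mention at the end, and it is self-contained and elementary. Your route trades that for the statement that a compact ANR has the homotopy type of a finite CW complex (West), after which the lifted equivariant CW structure makes the cellular chain complex of the cover a finitely generated free $A$-complex and Noetherianity finishes the proof; this is cleaner once the finite model is granted, but invokes a substantially deeper input than the paper needs, and as you note finite domination would suffice and is much cheaper. One small point to tighten: Lemma \ref{lemma:homotopic-bundles-isomorphic} as stated only compares pullbacks of homotopic maps from the \emph{same} space, so it does not by itself give the $\ZZ^k$-equivariant identification of $K_{f^*\phi}$ with $X_\phi$; what you should say is that the canonical bundle map $K_{f^*\phi}\to X_\phi$ covering the homotopy equivalence $f$ is itself a homotopy equivalence (pullback of a fibration along a homotopy equivalence), and being a map of principal $\ZZ^k$-bundles it is equivariant, so the induced isomorphism on homology is $\ZZ[\ZZ^k]$-linear --- which is all you need.
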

\begin{proof}
Since compact topological manifolds are Euclidean Neighborhood Retracts
(see e.g. \cite[Corollary A.9]{Hatcher}) we can identify homeomorphically $X$ with a closed subset of some
Euclidean space $\RR^N$
in such a way that there exists an open subset $\oO\subset\RR^N$ containing $X$ and a retraction
$r:\oO\to X$. For any $x\in X$ let $B_x\subset\oO$ be an open ball centered at $x$.
By compactness we may choose a finite set of points $x_1,\dots,x_s\in X$ such that
$X\subset Y:=B_{x_1}\cup\dots\cup B_{x_s}$. Let $B_i=B_{x_i}$ and let $\psi=\phi\circ r:Y\to T^k$.

Let $A:=\ZZ[t_1^{\pm 1},\dots,t_k^{\pm 1}]$.
For every $i$ the space $B_i$ is contractible, so by Lemma \ref{lemma:homotopic-bundles-isomorphic}
the principal $\ZZ^k$-bundle $(B_i)_{\psi}\to B_i$ is trivial. Hence, for every subspace $S\subseteq B_i$
we have an isomorphism of $A$-modules
$H_*(S_{\psi};\ZZ)\simeq H_*(S;\ZZ)\otimes_{\ZZ}A$. So if $S\subseteq B_i$ has the property that
$H_*(S;\ZZ)$ is a finitely generated abelian group,
then $H_*(S_{\psi};\ZZ)$ is a finitely generated $A$-module.
It also follows that
$H_*((B_i)_{\psi};\ZZ)$ is a free
$A$-module of rank $1$.

%For any $1\leq j\leq s$ we
Let $B_{\leq j}=B_1\cup\dots\cup B_j$.
We next prove that $H_*((B_{\leq j})_{\psi};\ZZ)$ is a finitely generated $A$-module for
every $j$, using ascending induction on $j$.
The case $j=1$ has been already been proved. Suppose that $j>1$ and that the claim is true for $j-1$.  The Mayer--Vietoris exact sequence (MVES)
\begin{multline*}
\dots\to
H_k((B_{\leq j-1})_{\psi}\cap (B_j)_{\psi};\ZZ)
\to
H_k((B_{\leq j-1})_{\psi};\ZZ)
\oplus H_k((B_j)_{\psi};\ZZ)
\to \\
\to
H_k((B_{\leq j})_{\psi};\ZZ)
\to
H_{k-1}((B_{\leq j-1})_{\psi}\cap (B_j)_{\psi};\ZZ)
\to \dots
\end{multline*}
is an exact sequence of $A$-modules, by the naturality of the MVES
and the fact that the $A$-module structure
on each term comes from an action of $\ZZ^k$ on the spaces commuting with the inclusions
$$(B_{\leq j-1})_{\psi}\hookleftarrow (B_{\leq j-1})_{\psi}\cap (B_j)_{\psi}\hookrightarrow (B_j)_{\psi}$$
and
$$(B_{\leq j-1})_{\psi}\hookrightarrow (B_{\leq j})_{\psi} \hookleftarrow (B_j)_{\psi}.$$
By the induction hypothesis
$H_k((B_{\leq j-1})_{\psi};\ZZ)
\oplus H_k(B_j)_{\psi};\ZZ)$ is a finitely generated $A$-module.
We have
$(B_{\leq j-1})_{\psi}\cap (B_j)_{\psi}=(B_{\leq j-1}\cap B_j)_{\psi}$,
$B_{\leq j-1}\cap B_j$ is obviously a subset of $B_j$, and
$H_*(B_{\leq j-1}\cap B_j;\ZZ)$ is finitely generated (because
$B_{\leq j-1}\cap B_j$ is the union of finitely many convex subsets of $\RR^N$).
Hence, $H_k((B_{\leq j-1})_{\psi}\cap (B_j)_{\psi};\ZZ)$
is a finitely generated $A$-module.

Since $A$ is Noetherian, the previous considerations and the exactness of the sequence
imply that $H_k((B_{\leq j})_{\psi};\ZZ)$ is a finitely generated $A$-module for every $k$.
Finally, $(B_{\leq j})_{\psi}$ is an $N$-dimensional topological manifold,
so its homology vanishes in dimensions bigger than $N$. It follows that
the entire homology $H_*((B_{\leq j})_{\psi};\ZZ)$ is a finitely
generated $A$-module, so the proof of the claim is complete.

To conclude the proof note that the inclusion $\iota:X\hookrightarrow Y$ and the retraction
$r:Y\to X$ induce $\ZZ^k$-equivariant maps $\iota':X_{\phi}\hookrightarrow Y_{\psi}$
and $r':Y_{\psi}\to X_{\phi}$ satisfying $r'\circ\iota'=\Id_{X_{\phi}}$.
It follows that $H_*(X_{\phi};\ZZ)$ is an $A$-submodule of $H_*(Y_{\psi};\ZZ)$. Since $A$
is Noetherian and $H_*(Y_{\psi};\ZZ)$ is finitely generated, it follows that
$H_*(X_{\phi};\ZZ)$ is also finitely generated.
\end{proof}

\section{Proofs of Theorem \ref{thm:discsym-rationally-hypertoral}
and Corollary \ref{cor:homeomorphic-torus}}
\label{s:proof-thm:main-thm}

Statement (1) of Theorem \ref{thm:discsym-rationally-hypertoral}
was proved in Section \ref{s:consequences-thm:linearising-action},
so we only need to prove (2).
Let $X$ be a rationally hypertoral $n$-dimensional manifold satisfying
$\discsym (X)=n$, so that $X$ supports effective actions
of $\Gamma_{r,n}=(\ZZ/r)^n$ for arbitrarily large integers $r$. Fix a continuous map $\phi:X\to T^n$
of nonzero degree. Let
$$d=|\deg \phi|.$$
Define the principal $\ZZ^n$-bundle $X_{\phi}\to X$ as in the previous
section. As we explained, $H_*(X_{\phi};\ZZ)$ has a structure of
module over $\ZZ[\ZZ^n]$ and, by Lemma \ref{lemma:finite-generated-cohomology},
$H_*(X_{\phi};\ZZ)$ is finitely generated as a $\ZZ[\ZZ^n]$-module.
Recall that $\ZZ[\ZZ^n]\simeq\ZZ[t_1^{\pm 1},\dots,t_n^{\pm 1}]$, where
$t_i$ is the characteristic function of the $i$-th element of the canonical basis of $\ZZ^n$.

The following lemma describes how certain homeomorphisms of $X$ lift to homeomorphisms of $X_{\phi}$. Recall that for any $a\in T^m$ we denote by $\tau(a):T^m\to T^m$ the translation $\tau(a)(t)=t+a$.

\begin{lemma}
\label{lemma:lifting-to-X-psi}
Let $\psi:X\to T^m$ be a map.
Let $f:X\to X$ be a homeomorphism of order $r$ satisfying $\tau(a)\circ \psi=\psi\circ f$ for some
$a\in T^m$, which necessarily satisfies $ra=0$.
Let $v\in \RR^m$ satisfy $\pi(v)=a$. There exist a lift of $f$, $g:X_{\psi}\to X_{\psi}$, satisfying $g^r(x,u)=(x,u+rv)$ for every $(x,u)\in X_{\psi}$. Furthermore, $g$ commutes
with the action of $\ZZ^m$ on $X_{\psi}$.
\end{lemma}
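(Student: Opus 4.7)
The plan is to construct the lift $g$ explicitly as the obvious candidate, verify it is well-defined, and compute its iterates directly; there is no real obstacle here beyond unraveling the definitions.

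First I would define $g \colon X_{\psi} \to X_{\psi}$ by the formula
\[
g(x,u) := (f(x),\, u + v).
\]
The key step is to check that $g$ actually lands in $X_{\psi}$: we need $\psi(f(x)) = \pi(u+v)$. Using the hypothesis $\tau(a)\circ\psi = \psi\circ f$ together with $\pi(v) = a$, we compute
\[
\psi(f(x)) = \psi(x) + a = \pi(u) + \pi(v) = \pi(u+v),
\]
as required. Continuity of $g$ is inherited from continuity of $f$ and translation on $\RR^n$, and the same construction applied to $f^{-1}$ and $-v$ provides a continuous two-sided inverse, so $g$ is a homeomorphism. The identity $\rho_{\psi}\circ g = f\circ \rho_{\psi}$ is immediate from the definition, so $g$ is a lift of $f$.

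Next I would establish by an easy induction on $k$ that
\[
g^k(x,u) = (f^k(x),\, u + kv)
\]
for every $k \geq 0$. Setting $k = r$ and using $f^r = \Id$ gives $g^r(x,u) = (x, u+rv)$, which is the desired formula. As a sanity check, $(x, u+rv)$ does lie in $X_{\psi}$ because $\pi(u+rv) = \pi(u) + r\pi(v) = \psi(x) + ra = \psi(x)$, where the vanishing of $ra$ follows automatically: iterating $\tau(a)\circ\psi = \psi\circ f$ gives $\tau(ra)\circ\psi = \psi\circ f^r = \psi$, and evaluating at any point of $X$ forces $ra = 0$. This completes the argument.
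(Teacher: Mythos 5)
Your proposal is correct and follows the same route as the paper: the paper's proof also defines $g(x,u)=(f(x),u+v)$, notes that $\tau(a)\circ\psi=\psi\circ f$ makes this a well-defined homeomorphism of $X_{\psi}$, and reads off $g^r(x,u)=(x,u+rv)$ directly. You have simply spelled out the verifications (well-definedness, lifting property, the induction on iterates, and $ra=0$) that the paper leaves implicit.
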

\begin{proof}
Recall that $X_{\psi}=\{(x,u)\in X\times\RR^m\mid \psi(x)=\pi(u)\}$.
Define $g:X_{\psi}\to X_{\psi}$ by $g(x,u)=(f(x),u+v)$.
The equality $\tau(a)\circ \psi=\psi\circ f$
guarantees that this is indeed a well defined homeomorphism of $X_{\psi}$.
It is immediate that $g^r(x,u)=(x,u+rv)$ for every $(x,u)\in X_{\psi}$ and that $g$ commutes with the action of $\ZZ^n$.
\end{proof}

\begin{lemma}
\label{lemma:roots-of-t-i}
For every $1\leq j\leq n$ there exists a nonzero integer $d_j$
and a sequence of natural numbers $o_{i,j}$ satisfying $o_{i,j}\to\infty$
as $i\to\infty$, and isomorphisms of $\ZZ[\ZZ^n]$-modules $w_{i,j}:H_*(X_{\phi};\ZZ)\to H_*(X_{\phi};\ZZ)$,
such that $w_{i,j}^{o_{i,j}}$ coincides with multiplication by $t_j^{d_j}$.
\end{lemma}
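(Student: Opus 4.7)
The plan is to use the $(\ZZ/r)^n$-actions on $X$ to construct, for each direction $j$, lifts to $X_\phi$ whose iterates realize multiplication by a power of $t_j$ on homology. The tools are the linearization Theorem \ref{thm:linearising-action}, which converts $\phi$ into an equivariant map, and Lemma \ref{lemma:lifting-to-X-psi}, which turns that equivariance into an explicit deck translation upon iteration.

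Setup. Fix a sequence $r_i\to\infty$ and an effective action of $G_i:=(\ZZ/r_i)^n$ on $X$. First I would use Lemma \ref{lemma:Minkowski-H1} to pass to a subgroup $G_i'\leq G_i$ of index at most some $C_X$ acting trivially on $H^1(X;\ZZ)$. Theorem \ref{thm:linearising-action} then supplies a morphism $\eta_i:G_i'\to T^n$ with $|\Ker\eta_i|\leq d$ and an $\eta_i$-equivariant map $\psi_i:X\to T^n$ homotopic to $\phi$. Using Lemma \ref{lemma:homotopic-bundles-isomorphic}, I would identify $H_*(X_\phi;\ZZ)$ with $H_*(X_{\psi_i};\ZZ)$ as $A$-modules via an isomorphism $\Phi_i$, where $A=\ZZ[t_1^{\pm 1},\dots,t_n^{\pm 1}]$.

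For each fixed $j\in\{1,\dots,n\}$, the strategy is to locate an element of $G_i'$ whose image under $\eta_i$ lies on the $j$-th coordinate circle $S_j\subset T^n$ and has large order. Since $|\eta_i(G_i')|\geq r_i^n/(C_X d)$ and every element of this image has order dividing $r_i$, Lemma \ref{lemma:upper-bound-intersection-with-S} yields $p_i:=|\eta_i(G_i')\cap S_j|\geq r_i/(C_X d)$. The intersection $\eta_i(G_i')\cap S_j$ is cyclic of order $p_i$ with canonical generator $a_i=(0,\dots,1/p_i,\dots,0)$. I would then pick $g_i\in G_i'$ with $\eta_i(g_i)=a_i$, let $m_i$ be its order (automatically a multiple of $p_i$ and a divisor of $r_i$), and apply Lemma \ref{lemma:lifting-to-X-psi} with $v_i=(0,\dots,1/p_i,\dots,0)\in\RR^n$. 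This produces a $\ZZ^n$-equivariant homeomorphism $\wt g_i:X_{\psi_i}\to X_{\psi_i}$ whose $m_i$-th iterate equals translation by the integer vector $m_iv_i=(0,\dots,m_i/p_i,\dots,0)$; on homology this becomes multiplication by $t_j^{n_i}$, where $n_i:=m_i/p_i$.

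The main obstacle is to make the exponent $n_i$ independent of $i$. The bound $n_i=m_i/p_i\leq r_i/p_i\leq C_X d$ confines the integers $n_i$ to a finite set of positive values, so by the pigeonhole principle I would pass to a subsequence on which $n_i$ equals a fixed integer $d_j\geq 1$. Defining $w_{i,j}:=\Phi_i^{-1}\circ(\wt g_i)_*\circ\Phi_i$ as an $A$-module automorphism of $H_*(X_\phi;\ZZ)$, and $o_{i,j}:=m_i=d_jp_i$, one has $o_{i,j}\to\infty$ (since $p_i\to\infty$) and $w_{i,j}^{o_{i,j}}$ equals multiplication by $t_j^{d_j}$. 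Repeating the construction for each $j\in\{1,\dots,n\}$ yields the lemma.
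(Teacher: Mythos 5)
Your proposal is correct and follows essentially the same route as the paper: linearize the action via Theorem \ref{thm:linearising-action}, use Lemma \ref{lemma:upper-bound-intersection-with-S} to find a large cyclic piece of $\eta_i(G_i')$ in the $j$-th coordinate circle, lift via Lemma \ref{lemma:lifting-to-X-psi}, transport through the $\ZZ^n$-equivariant identification of Lemma \ref{lemma:homotopic-bundles-isomorphic}, and pigeonhole the bounded exponent to fix $d_j$. The only (harmless) deviation is that you work directly with the kernel $G_i'$ and exponent $r_i$, whereas the paper first extracts a subgroup isomorphic to $\Gamma_{s_i,n}$ via Lemma \ref{lemma:subgroup-isomorphic-to-Gamma-a-b}; both yield the same conclusion.
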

\begin{proof}
Let $C$ be the number given by applying Theorem \ref{thm:Minkowski} to $X$.
%there exists a number $C$ such that for every action of a finite group $G$ on $X$
%the kernel $G'$ of the natural map $G\to \Aut H^1(X;\ZZ)$ satisfies $[G:G']\leq C$.
Let $0<r_1<r_2<\dots $ be the infinite sequence of integers such that
$X$ supports an effective action of $G_i:=\Gamma_{r_i,n}$ for every $i$.
Then $G_i':=\Ker(G_i\to \Aut H^1(X;\ZZ))$ satisfies
$[G_i:G_i']\leq C$. By Lemma \ref{lemma:subgroup-isomorphic-to-Gamma-a-b}
there is a subgroup $G_i''\leq G_i'$ such that $G_i''\simeq\Gamma_{s_i,n}$ for a natural
number $s_i$ satisfying $C!s_i\geq r_i$. In particular, $s_i\to\infty$ as $i\to\infty$.

Applying Theorem \ref{thm:linearising-action} to the action of $G_i''$ on $X$ we obtain
a morphism of groups $$\eta_i:G_i''\to T^n$$ and an $\eta_i$-equivariant map $\psi_i:X\to T^n$
which is homotopic to $\phi$. Also, $|\Ker\eta_i|$ divides $d$, so
$|\eta(G_i'')|\geq s_i^n/d$. For every $1\leq j\leq n$ let
$S_j=\{(\theta_1,\dots,\theta_n)\in T^n\mid \theta_k=0\text{ for $k\neq j$}\}$.

By Lemma \ref{lemma:upper-bound-intersection-with-S} we have
$$\ov{o}_{i,j}:=|\eta(G_i'')\cap S_j|\geq \frac{s_i^n}{d\cdot s_i^{n-1}}=\frac{s_i}{d}.$$
Recall that $\pi:\RR^n\to\RR^n/\ZZ^n=T^n$ is the quotient map, and that
$e_j\in\RR^n$ denotes the $j$-th element of the canonical basis.
The element $e_{i,j}=\pi(e_j/\ov{o}_{i,j})\in T^n$ is a generator of $\eta(G_i'')\cap S_j$.
Let
$$f_{i,j}:X\to X$$
be the homeomorphism given by the action of an element of $\eta^{-1}(e_{i,j})\subseteq G_i''$.
We have
$\psi_i\circ f_{i,j}=\tau(e_j/\ov{o}_{i,j})\circ \psi_i$.
The order of $f_{i,j}$ is $o_{i,j}=d_{i,j}\ov{o}_{i,j}$,
where $d_{i,j}$ is a natural number dividing $d$.
Passing to a subsequence and relabelling accordingly
we may assume that all natural numbers $d_{1,j},d_{2,j},\dots$ are equal to the same number $d_j$.
By Lemma \ref{lemma:lifting-to-X-psi}
there is a homeomorphism $g_{i,j}:X_{\psi_i}\to X_{\psi_i}$ such that
$g_{i,j}^{o_{i,j}}:X_{\psi}\to X_{\psi}$ coincides with the action of $d_je_j$ on $X_\psi$
given by the structure of principal $\ZZ^n$-bundle on $X_{\psi}$.

Since $\psi_i$ is homotopic to $\phi$, by Lemma \ref{lemma:homotopic-bundles-isomorphic} there
is a $\ZZ^n$-equivariant homeomorphism $\zeta_i:X_{\phi}\to X_{\psi_i}$.
Let $w_{i,j}:H_*(X_{\phi};\ZZ)\to H_*(X_{\phi};\ZZ)$ be the isomorphism induced by the
homeomorphism
$\zeta_i^{-1}\circ g_{i,j}\circ\zeta_i:X_{\phi}\to X_{\phi}.$
Then $w_{i,j}^{o_{i,j}}$ coincides with multiplication by $t_j^{d_j}$.
Since ${\ov o}_{i,j}\geq s_i/d$ and $s_i\to \infty$ as $i\to\infty$, we conclude that
$o_{i,j}\to\infty$ as $i\to\infty$.
\end{proof}

Combining the previous lemma with Corollary \ref{cor:finitely-generated-plus},
it follows that $H_*(X_{\phi};\ZZ)$ is a finitely generated $\ZZ$-module.
%(Note that we may apply Corollary \ref{cor:finitely-generated-plus}
%to the $\ZZ[t_1^{\pm 1},\dots,t_n^{\pm 1}]$-module
%$H_*(X_{\phi};\ZZ)$ thanks to Lemma \ref{lemma:finite-generated-cohomology}.)

\begin{lemma}
We have $H_k(X_{\phi};\ZZ)=0$ for every $k>0$.
\end{lemma}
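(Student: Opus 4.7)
The plan is to induct downwards on $q$, using the Serre spectral sequence of the fibration $X_\phi \to X \to T^n$ obtained from the Borel construction (since $\ZZ^n$ acts freely on $X_\phi$, $E\ZZ^n \times_{\ZZ^n} X_\phi \simeq X$):
$$E^2_{p,q} = H_p(\ZZ^n;\, H_q(X_\phi;\ZZ)) \;\Longrightarrow\; H_{p+q}(X;\ZZ).$$
The base case $q \ge n$ is immediate because $X_\phi$ is a non-compact orientable $n$-manifold. For the inductive step, assume $M_{q'}:=H_{q'}(X_\phi;\ZZ)=0$ for $q'>q$ with $1\le q\le n-1$; I will conclude $M_q=0$.

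First I show $M_q^{\ZZ^n}=0$. Under the inductive hypothesis no differentials enter or leave $E^2_{n,q}$ (incoming and outgoing targets vanish because $p>n$ or $q'>q$), so $E^\infty_{n,q}=E^2_{n,q}$. Identifying $H_n(\ZZ^n;M)=M^{\ZZ^n}$ via the top of the Koszul complex gives $E^\infty_{n,q}=M_q^{\ZZ^n}$, which embeds in $H_{n+q}(X;\ZZ)=0$.

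Next I show that $\ZZ^n$ acts on $M_q$ through a finite quotient. By Lemma \ref{lemma:roots-of-t-i} we have $\ZZ[\ZZ^n]$-linear automorphisms $w_{i,j}$ of $M_q$ with $w_{i,j}^{o_{i,j}}=t_j^{d_j}$ and $o_{i,j}\to\infty$; on the free part of $M_q$ each $w_{i,j}$ is an element of $\mathrm{GL}_D(\ZZ)$, with eigenvalues that are $o_{i,j}$-th roots of those of $t_j^{d_j}$. A Mahler-measure/Northcott argument on these algebraic integers of bounded degree, combined with Kronecker's theorem, forces the eigenvalues of $t_j^{d_j}$ on $M_q\otimes\QQ$ to be roots of unity. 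An integrality estimate then excludes nontrivial unipotent parts: if $t_j^{d_j}=I+N$ with $N\ne 0$, an $o_{i,j}$-th root in $\mathrm{GL}_D(\ZZ)$ would be approximately $I+N/o_{i,j}$, which is close to $I$ for large $o_{i,j}$ and hence equal to $I$ by integrality, contradicting $N\ne 0$. Thus $t_j^{d_j}$ is semisimple with roots of unity eigenvalues, so has finite order on $M_q\otimes\QQ$; since the torsion subgroup of $M_q$ is finite, $t_j^{d_j}$ has finite order on $M_q$ itself. Let $F$ be the resulting finite quotient through which $\ZZ^n$ acts on $M_q$, and set $K':=\ker(\ZZ^n\to F)$, a finite-index subgroup of $\ZZ^n$ acting trivially on $M_q$.

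Finally I apply the same spectral-sequence argument to the finite cover $\tilde X':=X_\phi/K'\to X$: by Theorem \ref{thm:discsym-covering}, $\tilde X'$ still supports effective $(\ZZ/r)^n$-actions for arbitrarily large $r$, and the composition $\tilde X'\to T^n$ has nonzero degree. A direct computation of the pullback identifies $\tilde X'_{\phi'}\cong \bigsqcup_{\bar\ell\in\ZZ^n/K'}X_\phi$, so $H_{q'}(\tilde X'_{\phi'};\ZZ) = \operatorname{Ind}_{K'}^{\ZZ^n}M_{q'}$, which vanishes for $q'>q$ by the inductive hypothesis. Running the top-row argument on $\tilde X'$ yields $H_q(\tilde X'_{\phi'};\ZZ)^{\ZZ^n}=0$; by Shapiro's lemma this equals $M_q^{K'}$, and since $K'$ acts trivially on $M_q$ we get $M_q^{K'}=M_q$. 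Hence $M_q=0$, completing the induction. The main obstacle is the integrality/Jordan argument of the third paragraph, controlling how the Jordan decomposition of $t_j^{d_j}$ interacts with the existence of arbitrarily-high-order integer roots in $\mathrm{GL}_D(\ZZ)$.
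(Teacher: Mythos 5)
Your argument is correct in substance, but it takes a genuinely different (and considerably heavier) route than the paper. Both proofs rest on the same engine: the Serre spectral sequence of the Borel construction for a free $\ZZ^n$- (or finite-index-subgroup-) action on $X_\phi$, together with the fact that the total space is homotopy equivalent to a closed $n$-manifold, so that a surviving corner entry in total degree $>n$ gives a contradiction. The difference is how the monodromy on the coefficients is tamed. The paper uses the fact, established in the paragraph just before the lemma, that $H_*(X_\phi;\ZZ)$ is finitely generated over $\ZZ$, and then passes to $\ZZ/p$ coefficients: $H_*(X_\phi;\ZZ/p)$ is then finite, so the kernel $\Lambda$ of $\ZZ^n\to\Aut H_*(X_\phi;\ZZ/p)$ automatically has finite index, and a single spectral sequence over $\RR^n/\Lambda$ with trivial coefficients, applied at the top nonvanishing degree, finishes the proof --- no induction, no number theory. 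You instead work integrally: the downward induction plus the identification $H_n(\ZZ^n;M_q)\simeq M_q^{\ZZ^n}$ gives only the vanishing of invariants, so you must prove that the integral monodromy is finite, which forces you to reuse Lemma \ref{lemma:roots-of-t-i} together with a Northcott--Kronecker argument and an integrality argument on unipotent parts, and then to pass to the finite cover $X_\phi/K'$ and apply Shapiro's lemma. What your route buys is that it never leaves $\ZZ$ coefficients and gives the (mildly interesting) intermediate fact that $\ZZ^n$ acts on each $H_q(X_\phi;\ZZ)$ through a finite group; what the paper's route buys is brevity, since the mod~$p$ reduction makes the finiteness of the monodromy free of charge.

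A few points you should shore up to make the write-up airtight. First, your base case: $H_n(X_\phi;\ZZ)=0$ requires every connected component of $X_\phi$ to be non-compact; this follows because $\deg\phi\neq 0$ forces $\phi_*\pi_1(X)$ to have finite index in $\ZZ^n$, so each component is an infinite covering of $X$ (alternatively, treat $q=n$ inside the induction, where only vanishing in degrees $>n$ is needed). Second, your third paragraph silently uses that $M_q$ is a finitely generated abelian group (to get $\GL(D,\ZZ)$ and finite torsion); this is legitimate since it is exactly Corollary \ref{cor:finitely-generated-plus} combined with Lemma \ref{lemma:roots-of-t-i}, proved just before the lemma, but it should be cited. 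Third, in the integrality step $t_j^{d_j}$ is a priori only quasi-unipotent after the eigenvalue argument, and the root $w_{i,j}$ is not unipotent; the clean fix is to raise both to a bounded power $m$ (bounded in terms of $D$) so that $w_{i,j}^{m}$ and $t_j^{md_j}$ become unipotent, and then use that $\log$ of a unipotent integer matrix has denominators dividing $(D-1)!$, so the identity $o_{i,j}\log(w_{i,j}^{m})=\log(t_j^{md_j})$ forces $w_{i,j}^{m}=\Id$ for large $o_{i,j}$, hence $t_j^{md_j}=\Id$. Finally, the appeals to Theorem \ref{thm:discsym-covering} and to the nonzero degree of $\tilde X'\to T^n$ in your last paragraph are superfluous: all you use there is that $\tilde X'$ is a closed $n$-manifold and that $H_{q'}(\tilde X'_{\phi'};\ZZ)$ is the induced module of $M_{q'}$.
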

\begin{proof}
Suppose that $H_k(X_{\phi};\ZZ)\neq 0$ for some $k>0$. By the universal coefficient
theorem, there exists some prime $p$ such that $H_k(X_{\phi};\ZZ/p)\neq 0$ for some $k>0$.
The action of $\ZZ^n$ on $X_{\phi}$ induces a morphism $\alpha_p:\ZZ^n\to\Aut(H_*(X_{\phi};\ZZ/p))$.
Since $H_*(X_{\phi};\ZZ)$ is a finitely generated $\ZZ$-module,
$H_*(X_{\phi};\ZZ/p)$ is a finite group,
so $\Lambda=\Ker \alpha_p$ %(\ZZ^n\to\Aut(H_*(X_{\phi};\ZZ/p)))$$
has finite index in $\ZZ^n$.

Consider the action of $\Lambda$ on $X_{\phi}\times\RR^n$ given by
$\lambda\cdot((x,u),v)=(\lambda\cdot (x,u),v-\lambda)=((x,u+\lambda),v-\lambda)$,
and let $X_{\phi}\times_{\Lambda}\RR^n$ denote the quotient space.
We have maps
$$\xymatrix{ \RR^n/\Lambda & X_{\phi}\times_{\Lambda}\RR^n\ar[l]_-{\Pi}\ar[r]^-{\Theta} & X_{\phi}/\Lambda},$$
where $\Pi$ is induced by the projection $X_{\phi}\times\RR^n\to\RR^n$
and $\Theta$ is induced by the projection $X_{\phi}\times\RR^n\to X_{\phi}$.
The map $\Pi$ is a fibration with fiber $X_{\phi}$. The map $\Theta$ is a fibration
with fiber $\RR^n$ and hence is a homotopy equivalence. (This is of course a
general phenomenon: $X_{\phi}\times_{\Lambda}\RR^n$ is the Borel construction for
the action of $\Lambda$ on $X_{\phi}$, and the fact that $X_{\phi}\times_{\Lambda}\RR^n$
is homotopy equivalent to the quotient $X_{\phi}/\Lambda$ is a consequence of the fact
that $\Lambda$ acts freely on $X_{\phi}$.)
Since $X_{\phi}/\Lambda$ is an $n$-dimensional manifold, we have
\begin{equation}
\label{eq:cohomologia-acotada}
H_k(X_{\phi}\times_{\Lambda}\RR^n;\ZZ/p)=H_k(X_{\phi}/\Lambda;\ZZ/p)=0
\qquad\text{for every $k>n$}.
\end{equation}

Since $\Lambda$ acts trivially $H_*(X_{\phi};\ZZ/p)$, the monodromy action of $\pi_1(\RR^n/\Lambda)\simeq\Lambda$
on the homology with $\ZZ/p$-coefficients of the fibers of $\Pi$ is trivial.
Consequently,
the homology Serre spectral sequence for the fibration $\Pi$ takes the form
$$H_a(\RR^n/\Lambda;H_b(X_{\phi};\ZZ/p))\simeq
H_a(\RR^n/\Lambda;\ZZ/p)\otimes_{\ZZ/p} H_b(X_{\phi};\ZZ/p)\Longrightarrow H_{a+b}(X_{\phi}\times_{\Lambda}\RR^n;\ZZ/p).$$

Let $l=\max\{k\mid H_k(X_{\phi};\ZZ/p)\neq 0\}$. By our choice of $p$, we have $l>0$. Then
$H_n(\RR^n/\Lambda;\ZZ/p)\otimes_{\ZZ/p} H_l(X_{\phi};\ZZ/p)$ is a nonzero entry in the second page
of the spectral sequence, and for dimension reasons it is contained in the kernel of every
differential %in the spectral sequence,
and none of its elements is killed by any differential;
consequently, $H_n(\RR^n/\Lambda;\ZZ/p)\otimes_{\ZZ/p} H_l(X_{\phi};\ZZ/p)$ can be identified
with a subquotient of $H_{n+l}(X_{\phi}\times_{\Lambda}\RR^n;\ZZ/p)$. Hence (\ref{eq:cohomologia-acotada}) implies that $l=0$,
which is a contradiction.
\end{proof}

Since $H_0(X_{\phi};\ZZ)$ is finitely generated, $\pi_0(X_{\phi})$
is finite. Since $X\simeq X_{\phi}\times_{\ZZ^n}\RR^n$,
the space $X_{\phi}\times_{\ZZ^n}\RR^n$ is connected.
The projection $\Pi:X_{\phi}\times_{\ZZ^n}\RR^n\to\RR^n/\ZZ^n$ is a fibration with fiber $X_{\phi}$, so
the monodromy action of $\ZZ^n$ on $\pi_0(X_{\phi})$ is transitive. The monodromy action
coincides with the action naturally induced by the action of $\ZZ^n$ on $X_{\phi}$.
So we have proved the following lemma.

\begin{lemma}
$\pi_0(X_{\phi})$ is finite, and the action of $\ZZ^n$ on $X_{\phi}$ induces a transitive
action on $\pi_0(X_{\phi})$.
\end{lemma}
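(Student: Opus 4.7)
The plan is to extract both assertions from results already in hand. For the finiteness of $\pi_0(X_\phi)$, I would invoke the finite generation of $H_*(X_\phi;\ZZ)$ as a $\ZZ$-module, established just above via Corollary \ref{cor:finitely-generated-plus} and Lemma \ref{lemma:roots-of-t-i}. In particular $H_0(X_\phi;\ZZ)$ is a finitely generated abelian group. Since $X_\phi$ is a covering space of the topological manifold $X$, it is itself locally Euclidean and hence locally path-connected, so its path components coincide with its connected components and $H_0(X_\phi;\ZZ)$ is free abelian on $\pi_0(X_\phi)$. Finite generation therefore forces $\pi_0(X_\phi)$ to be finite.

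For transitivity, I would use that $\rho_\phi:X_\phi\to X$ is a principal $\ZZ^n$-bundle, and in particular a regular covering map, and that $X$ is connected. Let $C\subseteq X_\phi$ be any connected component. Because $X_\phi$ is locally path-connected, $C$ is open in $X_\phi$, and the restriction $\rho_\phi|_C:C\to X$ is a covering map: each evenly covered open $U\subseteq X$ pulls back to a disjoint union of sheets each of which is either entirely inside $C$ or disjoint from $C$. Consequently $\rho_\phi(C)$ is open in $X$, and its complement is also open (being the union of the images of the other components), so by connectedness of $X$ we get $\rho_\phi(C)=X$.

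Now fix any $x_0\in X$. Given two components $C,C'$ of $X_\phi$, the previous paragraph yields points $p\in C$ and $p'\in C'$ in the fiber $\rho_\phi^{-1}(x_0)$. Since $X_\phi\to X$ is a principal $\ZZ^n$-bundle, $\ZZ^n$ acts transitively on this fiber, so there exists $\nu\in\ZZ^n$ with $\nu\cdot p=p'$; the corresponding homeomorphism of $X_\phi$ maps $C$ to the component containing $p'$, namely $C'$. Hence $\ZZ^n$ acts transitively on $\pi_0(X_\phi)$. I do not anticipate any serious obstacle here: both statements reduce to standard covering-space arguments, with the only non-trivial input being the finite generation of $H_*(X_\phi;\ZZ)$ already proved.
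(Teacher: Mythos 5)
Your proof is correct, but the finiteness half takes a genuinely different route from the paper, while the transitivity half is essentially the paper's own argument (the paper is even terser: since $X$ is arcconnected, every component of $X_\phi$ meets the fiber $\rho_\phi^{-1}(x_0)$, on which $\ZZ^n$ acts transitively). For finiteness the paper does not use the finite generation of $H_*(X_\phi;\ZZ)$ at all: it identifies $\pi_0(X_\phi)$ with the orbit set of the monodromy action, hence with $\pi_1(T^n,\phi(x_0))/\phi_*\pi_1(X,x_0)\simeq H_1(T^n;\ZZ)/\phi_*H_1(X;\ZZ)$, and proves this index is finite using only $\deg\phi\neq 0$: if it were infinite there would be a nonzero $\beta\in H^1(T^n;\QQ)$ with $\phi^*\beta=0$, and completing $\beta$ to classes whose cup product generates $H^n(T^n;\QQ)$ would force $\phi^*$ to vanish on $H^n(T^n;\QQ)$, contradicting the nonzero degree. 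You instead read finiteness off from $H_0(X_\phi;\ZZ)$ being free abelian on $\pi_0(X_\phi)$ and finitely generated; this is legitimate at this point of the proof, since the finite generation of $H_*(X_\phi;\ZZ)$ over $\ZZ$ has already been established, but it leans on the full machinery (the actions of $(\ZZ/r)^n$, Theorem \ref{thm:linearising-action}, Lemma \ref{lemma:roots-of-t-i} and Corollary \ref{cor:finitely-generated-plus}), whereas the paper's argument works for any map of nonzero degree independently of the group-action hypothesis and, as a bonus, identifies $\pi_0(X_\phi)$ with the cokernel of $\phi_*$ on $H_1$. One small repair in your transitivity step: the complement of $\rho_\phi(C)$ is not ``the union of the images of the other components'' (each such image also turns out to equal $X$); its openness follows instead from the sheet argument you already used, namely that a connected evenly covered neighbourhood of a point outside $\rho_\phi(C)$ meets no sheet contained in $C$.
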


Fix an arcconnected component $X_{\phi}^0\subseteq X_{\phi}$.
Since $H_k(X_{\phi};\ZZ)=0$ for every $>0$, $X_{\phi}^0$ is acyclic.
%has the same integral homology as the point:
%\begin{equation}
%\label{eq:cohomologia-X-phi-0}
%H_0(X_{\phi}^0;\ZZ)\simeq\ZZ\qquad\text{and}\qquad H_k(X_{\phi}^0;\ZZ)=0\text{ for $k>0$}.
%\end{equation}
Let $V\leq\ZZ^n$ be the subgroup consisting of those elements of $\ZZ^n$
whose action of $X_{\phi}$ maps $X_{\phi}^0$ to itself. By the previous lemma,
$V$ has finite index in $\ZZ^n$ and we have $X=X_{\phi}/\ZZ^n=X_{\phi}^0/V$.
Since $H_1(X_{\phi}^0)=0$, it follows that $X_{\phi}^0$ is isomorphic to the
universal abelian cover of $X$.

Since $V$ has finite index in $\ZZ^n$, the quotient $\RR^n/V$ is homeomorphic to
$T^n$. Arguing as in the definition of $X_{\phi}\times_\Lambda\RR^n$
we prove that $X=X_{\phi}^0/V$ is homotopy equivalent
to $X_{\phi}^0\times_V\RR^n$, where the latter is defined exactly as $X_{\phi}\times_\Lambda\RR^n$
but replacing $X_{\phi}$ resp. $\Lambda$ with $X_{\phi}^0$ resp. $V$. The fibers of the
projection $Q:X_{\phi}^0\times_V\RR^n\to\RR^n/V$
can be identified with $X_{\phi}^0$, and hence are acyclic; it follows that
$Q$ induces an isomorphism in integral homology.
So $H_*(X;\ZZ)=H_*(X_{\phi}^0/V;\ZZ)$ is isomorphic to $H_*(\RR^n/V;\ZZ)\simeq H_*(T^n;\ZZ)$. This finishes the proof of statement (2) of Theorem \ref{thm:discsym-rationally-hypertoral}.

We next prove Corollary \ref{cor:homeomorphic-torus}, using
the notation of the previous arguments. Assume first that $\pi_1(X)$ is solvable.
The projection $X_{\phi}^0\to X$ is a covering space, so it identifies $\pi_1(X_{\phi}^0)$ with
a subgroup of $\pi_1(X)$. Hence, $\pi_1(X_{\phi}^0)$ is solvable. Since $X_\phi^0$ is acyclic,
the abelianization of $\pi_1(X_{\phi}^0)$ is trivial, which implies that $\pi_1(X_{\phi}^0)$
itself is trivial. By Hurewicz's theorem, $X_{\phi}^0$ is contractible
(see e.g. \cite[Corollary 4.33]{Hatcher}).

\begin{remark}
\label{rmk:solvable}
This is the only point where we use that $\pi_1(X)$ is (virtually) solvable.
Note that there exist non-contractible acyclic manifolds.
Indeed, by a result of Kervaire \cite[Theorem 1]{Kervaire} any finitely generated acyclic group
is the fundamental group of an integral smooth homology sphere of dimension $n>4$,
and there are plenty of examples of such groups (see e.g. \cite{Berrick}).
Removing a point from such manifold we obtain an acyclic
manifold with the same fundamental group. It is not clear, however, whether a non simply connected
acyclic $n$-dimensional manifold can support a free action of $\ZZ^n$ with compact quotient
(let alone that its quotient by $\ZZ^n$
supports free actions of $(\ZZ/r)^n$ for arbitrarily large $r$). So it could be the case
that our assumption that $\pi_1(X)$ is virtually solvable is unnecessary.
\end{remark}

It follows that $Q:X_{\phi}^0\times_V\RR^n\to\RR^n/V$ is a homotopy equivalence. Precomposing it with
a homotopy inverse of the projection $X_{\phi}^0\times_V\RR^n\to X_{\phi}^0/V=X$ we obtain a
homotopy equivalence $X\to\RR^n/V$. By topological rigidity of tori,
$X$ is homeomorphic to $T^n$.

To conclude, let us prove Corollary \ref{cor:homeomorphic-torus}
in the general case.
Suppose that $X$ is a rationally hypertoral manifold satisfying $\discsym(X)=n$
and that $\pi_1(X)$ is virtually solvable.
Then there is a finite covering $r:X'\to X$ such that $\pi_1(X')$ is solvable.
Let $\phi:X\to T^n$ be
a map of nonzero degree and let $\phi'=\phi\circ r$.
Then $\deg\phi'=\deg\phi\cdot\deg r\neq 0$ and we have a Cartesian diagram
$$\xymatrix{X'_{\phi'}\ar[r]^{r_{\phi}}\ar[d]_{\rho_{\phi'}} & X_{\phi}\ar[d]^{\rho_{\phi}} \\
X'\ar[r]^r  & X.}$$
In particular, $r_{\phi}$ is a finite (unramified) covering space.
By Theorem \ref{thm:discsym-covering} we have $\discsym(X')\geq n$. Applying the previous
discussion to $X'$ we conclude that the connected components of $X'_{\phi'}$
are contractible. Let $X_\phi^0$ be any connected component of $X_{\phi}$ and
denote by $(X'_{\phi'})^0$ its preimage under $r_{\phi}$. Let $\pi=\pi_1(X_{\phi}^0)$, so that
$X_{\phi}^0=(X'_{\phi'})^0/\pi$, where $\pi$ acts freely on $(X'_{\phi'})^0$. Note that $\pi$
is finite, because $r_{\phi}$ is a finite covering. The freeness
of the action implies that $\pi$ acts freely on the set of connected components of
$(X'_{\phi'})^0$ because by Smith's theory
a homeomorphism of primer order
of a contractible manifold has necessarily some fixed point
(see e.g. \cite[Chap III, Corollary 4.6]{Bo}). Hence, $X_{\phi}^0$
is also contractible, so the same argument as in the case of solvable fundamental group
allows to prove that $X$ is homeomorphic to $T^n$.

\section{Proof of Theorem \ref{thm:bounding-discsym}}
\label{s:proof-thm:bounding-discsym}

%Statement (1) of Theorem \ref{thm:bounding-discsym} follows from combining
%Theorem \ref{thm:Minkowski},
%Theorem \ref{thm:linearising-action} and Lemma \ref{lemma:upper-bound-subgroup-T-d}.
%For the proof of statement (2)
We will need the following two lemmas.

\begin{lemma}
\label{lemma:hypertoral-product}
Let $X,Y$ be two closed connected topological manifolds.
Then $X\times Y$ is rationally hypertoral if and only if both $X$ and $Y$ are
rationally hypertoral.
\end{lemma}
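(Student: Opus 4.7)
The forward direction is straightforward. If $\phi:X\to T^{m}$ and $\psi:Y\to T^{n}$ are maps of nonzero degree (where $m=\dim X$ and $n=\dim Y$), then $\phi\times\psi:X\times Y\to T^{m}\times T^{n}=T^{m+n}$ has degree $\deg\phi\cdot\deg\psi\neq 0$, so $X\times Y$ is rationally hypertoral. (Note $X\times Y$ is orientable iff both factors are, so the setup is consistent.)

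For the reverse direction I would use the cup product characterisation of rational hypertorality given in the introduction. Suppose $X\times Y$ is rationally hypertoral, so there exist classes $\gamma_1,\dots,\gamma_{m+n}\in H^1(X\times Y;\ZZ)$ with $\gamma_1\cup\dots\cup\gamma_{m+n}\neq 0$. Passing to rational coefficients (which suffices for rational hypertorality), the K\"unneth formula gives an isomorphism of graded rings $H^*(X\times Y;\QQ)\simeq H^*(X;\QQ)\otimes H^*(Y;\QQ)$, and in degree one it yields $H^1(X\times Y;\QQ)\simeq H^1(X;\QQ)\oplus H^1(Y;\QQ)$. Write $\gamma_i=\alpha_i+\beta_i$ with $\alpha_i=p_X^*a_i$ and $\beta_i=p_Y^*b_i$ for $a_i\in H^1(X;\QQ)$, $b_i\in H^1(Y;\QQ)$.

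Expanding the cup product $\prod_i(\alpha_i+\beta_i)$ yields $2^{m+n}$ signed summands indexed by subsets $I\subseteq\{1,\dots,m+n\}$, where the summand attached to $I$ lives in $H^{|I|}(X;\QQ)\otimes H^{m+n-|I|}(Y;\QQ)$. Since $\dim X=m$ and $\dim Y=n$, all summands with $|I|\neq m$ vanish for dimensional reasons, leaving
\[
\gamma_1\cup\dots\cup\gamma_{m+n}=\sum_{\substack{I\subseteq\{1,\dots,m+n\}\\ |I|=m}}\varepsilon_I\,\Bigl(\prod_{i\in I}a_i\Bigr)\otimes\Bigl(\prod_{j\notin I}b_j\Bigr)
\]
for appropriate signs $\varepsilon_I=\pm 1$. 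As $X$ and $Y$ are closed connected oriented manifolds, $H^m(X;\QQ)\simeq\QQ$ and $H^n(Y;\QQ)\simeq\QQ$, so each factor on the right is a scalar multiple of a generator; since the total sum is nonzero, at least one index $I$ must give both $\prod_{i\in I}a_i\neq 0$ in $H^m(X;\QQ)$ and $\prod_{j\notin I}b_j\neq 0$ in $H^n(Y;\QQ)$. The $a_i$ and $b_j$ are integral classes (the rational reductions of classes in $H^1(X;\ZZ)$, $H^1(Y;\ZZ)$, which are torsion-free), so nonvanishing over $\QQ$ implies nonvanishing over $\ZZ$. Thus $X$ and $Y$ are each rationally hypertoral.

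The only point demanding care is the sign bookkeeping in the expansion and the justification that terms with $|I|\neq m$ vanish; the first is purely formal (graded-commutativity of cup product on degree-one classes) and the second is immediate from $\dim X=m$, $\dim Y=n$. There is no substantive obstacle.
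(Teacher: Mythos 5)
Your proposal is correct and takes essentially the same route as the paper's proof: for the nontrivial direction you decompose the degree-one classes via K\"unneth, expand the cup product, and use the dimension count $\dim X=m$, $\dim Y=n$ to kill all but the balanced terms, concluding that some balanced term --- and hence both factors' top products --- must be nonzero. The only cosmetic differences are that you handle the easy direction by multiplying the maps to the tori directly (the paper instead multiplies cup-product classes through the top-degree K\"unneth isomorphism) and that you work with $\QQ$ rather than $\ZZ$ coefficients, which is harmless since top cohomology and $H^1$ are torsion-free.
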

\begin{proof}
Let $\pi_X,\pi_Y$ be the projections from $X\times Y$ to $X,Y$ respectively.
Let $m=\dim X$ and $n=\dim Y$.
If $X$ and $Y$ are rationally hypertoral then there exist classes
$\alpha_1,\dots,\alpha_m\in H^1(X;\ZZ)$ and $\beta_1,\dots,\beta_n\in H^1(Y;\ZZ)$
such that $\alpha_1\smile\dots\smile\alpha_m\neq 0$ and $\beta_1\smile\dots\smile\beta_n\neq 0$.
Then $\pi_X^*\alpha_1\smile\dots\smile\pi_X^*\alpha_m\smile \pi_Y^*\beta_1\smile\dots\smile\pi_Y^*\beta_n\neq 0$ by K\"unneth,
%because the map $H^m(X;\ZZ)\otimes H^n(Y;\ZZ)\to H^{m+n}(X\times Y;\ZZ)$ given by
%pullback and cup-product is an isomorphism,
so $X\times Y$ is rationally hypertoral. Conversely, suppose that there exist classes
$\gamma_1,\dots,\gamma_{m+n}\in H^1(X\times Y;\ZZ)$ such that $\gamma_1\smile\dots\smile\gamma_{m+n}\neq 0$.
By K\"unneth we may write $\gamma_i=\delta_i+\epsilon_i$ where $\delta_i\in\pi_X^*H^1(X;\ZZ)$ and
$\epsilon_i\in\pi_Y^*H^1(Y;\ZZ)$. Expanding the product
$\gamma_1\smile\dots\smile\gamma_{m+n}=(\delta_1+\epsilon_1)\smile\dots\smile(\delta_{m+n}+\epsilon_{m+n})$
the only terms that are not automatically zero are equal (after reordering terms) to terms of the form
$\delta_{i_1}\smile\dots\smile\delta_{i_m}\smile\epsilon_{j_1}\smile\dots\smile\epsilon_{j_n}$
where $\{i_1,\dots,i_m\}\cup\{j_1,\dots,j_n\}=\{1,\dots,m+n\}$. Some of these monomials
is different from zero, and hence both $X$ and $Y$ are rationally hypertoral.
\end{proof}

\begin{lemma}
\label{lemma:hypertoral-connected-sum}
Let $X,Y$ be closed connected topological manifolds of the same dimension. Suppose that
$X\sharp Y$ is rationally hypertoral. Then at least one of the manifolds
$X,Y$ is also rationally hypertoral.
\end{lemma}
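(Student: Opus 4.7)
The plan is to use the two collapse maps $c_X\colon X\sharp Y\to X$ and $c_Y\colon X\sharp Y\to Y$, obtained by crushing the $Y$-summand (resp.\ $X$-summand) to a point, together with the characterisation of rationally hypertoral manifolds in terms of the existence of one-dimensional classes whose $n$-fold cup product is nonzero.

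First I would recall that, since $\dim X=\dim Y=n\geq 1$, the natural quotient map $p\colon X\sharp Y\to X\vee Y$ (collapsing the embedded $S^{n-1}$ that defines the connected sum to a point) induces an isomorphism on $H^i(-;\ZZ)$ for $0<i<n$. Composing with the projections $X\vee Y\to X$ and $X\vee Y\to Y$ one recovers $c_X$ and $c_Y$, so
$$c_X^*\oplus c_Y^*\colon H^1(X;\ZZ)\oplus H^1(Y;\ZZ)\stackrel{\simeq}{\longrightarrow}H^1(X\sharp Y;\ZZ).$$
Accordingly, any classes $\gamma_1,\dots,\gamma_n\in H^1(X\sharp Y;\ZZ)$ with $\gamma_1\cup\dots\cup\gamma_n\neq 0$ may be written as $\gamma_i=c_X^*\alpha_i+c_Y^*\beta_i$ with $\alpha_i\in H^1(X;\ZZ)$ and $\beta_i\in H^1(Y;\ZZ)$.

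The key observation is that mixed cup products vanish: if $\alpha\in H^{>0}(X;\ZZ)$ and $\beta\in H^{>0}(Y;\ZZ)$ then $c_X^*\alpha\cup c_Y^*\beta=0$ in $H^*(X\sharp Y;\ZZ)$. To see this I would factor the pair $(c_X,c_Y)\colon X\sharp Y\to X\times Y$ through the wedge $X\vee Y\hookrightarrow X\times Y$ (which is possible because on each summand of $X\sharp Y$ one of the two collapse maps is constant). Since the restriction $H^*(X\times Y;\ZZ)\to H^*(X\vee Y;\ZZ)$ kills every external product of two positive-degree classes, the claim follows. Expanding
$$\gamma_1\cup\dots\cup\gamma_n=(c_X^*\alpha_1+c_Y^*\beta_1)\cup\dots\cup(c_X^*\alpha_n+c_Y^*\beta_n)$$
and discarding every monomial that mixes classes pulled back from $X$ with classes pulled back from $Y$, one is left with
$$\gamma_1\cup\dots\cup\gamma_n=c_X^*(\alpha_1\cup\dots\cup\alpha_n)+c_Y^*(\beta_1\cup\dots\cup\beta_n)\in H^n(X\sharp Y;\ZZ).$$
If this sum is nonzero, then at least one of $\alpha_1\cup\dots\cup\alpha_n\in H^n(X;\ZZ)$ or $\beta_1\cup\dots\cup\beta_n\in H^n(Y;\ZZ)$ must be nonzero, which is precisely the condition that $X$ or $Y$ be rationally hypertoral.

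The only subtle point, and therefore the step I would be most careful with, is the vanishing of mixed cup products; everything else is a direct consequence of standard facts about the cohomology of connected sums and wedges. In particular one must check that the factorisation through $X\vee Y$ really works at the level of continuous maps (it does, provided one picks compatible basepoints lying on the sphere used to form the connected sum), so that the cup-product vanishing can be read off from the well-known ring structure of $H^*(X\vee Y;\ZZ)$. Once this is in place the argument reduces to the trivial observation that a sum of two cohomology classes is nonzero only if one of the summands is.
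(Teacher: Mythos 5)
Your proof is correct, but it takes a genuinely different route from the paper's. The paper argues with the map $\phi:X\sharp Y\to T^n$ itself: it attaches an $n$-disk along the separating sphere $W$, uses $\pi_{n-1}(T^n)=0$ to extend $\phi$ over the disk (which is why it assumes $n\geq 3$ and disposes of $n=1,2$ beforehand by classification), identifies the resulting space up to homotopy with $X\vee Y$, and concludes that the restriction of the induced map $X\vee Y\to T^n$ to one of the two summands still has nonzero degree. You instead stay entirely in cohomology: you write every degree-one class of $X\sharp Y$ as a sum of pullbacks under the two collapse maps, kill the mixed cup products via the factorization of $(c_X,c_Y)$ through $X\vee Y\subset X\times Y$, and invoke the cup-product characterization of rationally hypertoral manifolds from the introduction. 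What your approach buys is uniformity: it needs neither the extension step (so no appeal to $\pi_{n-1}(T^n)=0$) nor the classification of surfaces, and it works verbatim for all $n\geq 2$; what the paper's approach buys is that it never needs the splitting of $H^1(X\sharp Y;\ZZ)$, only degree additivity on the wedge. Two minor points to tidy up: you only need surjectivity of $c_X^*\oplus c_Y^*$ in degree $1$, which the cofiber sequence of $W\subset X\sharp Y$ gives at once (the full isomorphism claim for $0<i<n$, in particular at $i=n-1$ and hence for $n=2$, needs the extra observation that restriction to the separating sphere vanishes); and for $n=1$ your splitting statement fails, so that case should simply be recorded as trivial, since the only closed connected $1$-manifold is $S^1=T^1$, exactly as the paper does.
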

\begin{proof}
If $n=1$ the statement is obvious, and if $n=2$ it follows from the classification
of closed connected surfaces. Suppose that $n\geq 3$ and that
$Z:=X\sharp Y$ is rationally hypertoral.
Let $X_0,Y_0$ be the complementaries of open balls in $X,Y$ respectively, so that
$\partial X_0\cong S^{n-1}\cong \partial Y_0$.
We identify $Z$ with $X_0\cup _{\partial X_0\cong \partial Y_0}Y_0$.
Denote by $i_X:X_0\hookrightarrow Z$
and $i_Y:Y_0\hookrightarrow Z$ the natural inclusions.

Let $W:=i_X(X_0)\cap i_Y(Y_0)$, choose a homeomorphism $\xi:W\to S^{n-1}$, and let $E$
be the result of attaching to $Z$ an $n$-disk $D^n$ along its boundary $\partial D^n=S^{n-1}$,
using $\xi$. Let $\phi:Z\to T^n$ be a map of nonzero degree.
Since $\pi_{n
-1}(T^n)=0$, the restriction of $\phi$ to $W$ is homotopically trivial, so $\phi$ extends
to a continuous map $\psi:E\to T^n$. By contracting $D^n\subset E$ we obtain a map $c:E\to X\vee Y$
that is a homotopy equivalence. Composing $\psi$ with a homotopy inverse of $c$ we obtain a map
$\zeta:X\vee Y\to T^n$. The map $\phi$ coincides up to homotopy with the composition
$\zeta\circ(c|_Z):Z\longrightarrow X\vee Y\longrightarrow T^n.$
Since the degree of $\phi$ is nonzero, it follows that the restriction of $\zeta$ to one of the
two summands $X\subset X\vee Y$ or $Y\subset X\vee Y$ has to be nonzero, so the corresponding manifold
is rationally hypertoral.
\end{proof}

We now prove Theorem \ref{thm:bounding-discsym}.
Let $X=(Y\sharp Y')\times Z$ be a rationally hypertoral manifold, where
$\dim Y=\dim Y'=k$ and $\dim Z=n-k$ for some integer $0\leq k<n$. Since $X$ is orientable,
$Y,Y',Z'$ are orientable. By Lemma \ref{lemma:hypertoral-product}
both $Y\sharp Y'$ and $Z$ are hypertoral, and by Lemma \ref{lemma:hypertoral-connected-sum}
at least one of the manifolds $Y,Y'$ (say, $Y$) is rationally hypertoral.
Assume that $H^*(Y';\ZZ)$ is not isomorphic to $H^*(S^{n-k};\ZZ)$.
Choose maps of nonzero degree $\phi_Y:Y\to T^{k}$, $\phi_Z:Z\to T^{n-k}$ and let
$\phi=(\phi_Y',\phi_Z):X=(Y\sharp Y')\times Z\to T^{k}\times T^{n-k}=T^n$, where
$\phi_Y'=\phi_Y\circ c_{Y'}:Y\sharp Y'\to T^{n-k}$ and
$c_{Y'}:Y\sharp Y'\to Y$ is the map collapsing $Y'$. Then $d:=\deg \phi$ is nonzero.

We prove that $\discsym(X)\leq n-k$ by contradiction. Assume that
there exists a sequence of natural numbers $r_i\to\infty$ such that $\Gamma_i:=\Gamma_{r_i,n-k+1}$
acts effectively on $X$. Arguing as in the end of Section \ref{ss:proof-thm:weak-bound-disc-sym},
we may assume without loss of generality that, for each $i$, $r_i=p_i^{e_i}$ for some prime $p_i$ and a natural number $e_i$.

By Theorem \ref{thm:Minkowski}, Theorem \ref{thm:linearising-action}
and Lemma \ref{lemma:subgroup-isomorphic-to-Gamma-a-b}, there exists a constant $C$
and, for every $i$, a subgroup $\Gamma_i'\leq\Gamma_i$ isomorphic to $\Gamma_{r_i',n-k+1}$, with
$r_i'$ dividing $r_i$ and satisfying $Cr_i'\geq r_i$, a morphism $\eta_i:\Gamma_i'\to T^n$
satisfying $|\Ker\eta_i|\leq d$, and an $\eta_i$-equivariant map
$$\phi_i:X\to T^n$$ homotopic to $\phi$.
By Lemma \ref{lemma:subgroup-isomorphic-to-Gamma-a-b-quotient} applied to the map
$\eta_i:\Gamma_i'\to\eta_i(\Gamma_i')$, there is a constant $C'$ and, for each $i$, a subgroup
$\Gamma_i''\leq \eta_i(\Gamma_i')$ isomorphic to $\Gamma_{s_i,n-k+1}$ with $C's_i\geq r_i'$.

We identify $T^k$ with the subgroup of $T^n$ consisting of elements
whose last $n-k$ coordinates vanish. Let $Q:T^n\to T^n/T^{k}\simeq T^{n-k}$ be the quotient map.
We claim that there exists an element $\gamma_i\in T^k\cap\Gamma_i''$ of order $s_i$.
Otherwise, $T^k\cap\Gamma_i''$ would be included in $p_i\Gamma_i''$. Then
$Q(\Gamma_i'')\simeq \Gamma_i''/(T^k\cap\Gamma_i'')$ would have a quotient
isomorphic to $\Gamma_i''/p_i\Gamma_i''\simeq \Gamma_{p_i,n-k+1}$.
By Lemma \ref{lemma:surjection-subgroup}, $Q(\Gamma_i'')$ would have a subgroup isomorphic to
$\Gamma_{p_i,n-k+1}$, which contradicts Lemma \ref{lemma:upper-bound-subgroup-T-d}.
Let $\pi_j:T^k\to S^1$ denote the projection to the $j$-th factor. Since $s_i$ is a
prime power, for each $i$ there exists some $j_i\in\{1,\dots,k\}$ such that
the order of $\pi_{j_i}(\gamma_i)$ is equal to the order of $\gamma_i$, i.e., to $s_i$.
Passing to a subsequence we may assume that all $j_i$ are equal to some
$j\in\{1,\dots,k\}$.

Define maps $X\to S^1$ by $\psi_i=\pi_j\circ\phi_i$ and $\zeta=\pi_j\circ\phi$.
Then $X_{\psi_i}\to X$ is a $\ZZ$-principal bundle and $H_*(X_{\psi_i};\ZZ)$ is a finitely generated
$\ZZ[\ZZ]=\ZZ[t^{\pm 1}]$ module by Theorem \ref{thm:finitely-generated}.
The maps $\psi_i$ and $\zeta$ are homotopic,
so there exists a $\ZZ$-equivariant homeomorphism $\chi_i:X_{\zeta}\to X_{\psi_i}$.

%We are next going to prove that $H_*(X_{\zeta};\ZZ)$ is finitely generated as a $\ZZ$-module,
%using an argument similar to the proof of Lemma \ref{lemma:roots-of-t-i}.
Let $\pi:\RR\to\RR/\ZZ=S^1$ be the quotient map. Replacing each $\gamma_i$ by
some power $\gamma_i^{a_i}$ with $a_i$ not divisible by $p_i$, we may assume that
$\pi_j(\gamma_i)=\pi(s_i^{-1})$.
Let $\wt{\gamma}_i\in\Gamma_i'$ satisfy $\eta_i(\wt{\gamma}_i)=\gamma_i$.
Let $o_i$ be the order of $\wt{\gamma}_i$. Then $s_i$ divides $o_i$ and $o_i$ divides $r_i$,
so we may write $o_i=\delta_is_i$ with $1\leq \delta_i\leq CC'$. Passing to a subsequence we may
assume that $o_i=\delta s_i$ for some $1\leq \delta\leq CC'$.
By Lemma \ref{lemma:lifting-to-X-psi} there is a homeomorphism $\theta_i:X_{\psi_i}\to X_{\psi_i}$
lifting the action of $\wt{\gamma}_i$ and such that $\theta_i^{o_i}$ coincides with the
action of $\delta=o_i/s_i$ on $X_{\psi_i}$. Let $w_i=(\chi_i^{-1}\circ\theta_i\chi_i)^*:H_*(X_{\zeta};\ZZ)\to H_*(X_{\zeta};\ZZ)$. Then
$w_i$ is an automorphism of $H_*(X_{\zeta};\ZZ)$ as a $[\ZZ^{\pm t}]$-module, and
$w_i^{o_i}$ is equal to multiplication by $t^\delta$. Since $o_i\to\infty$,
Corollary \ref{cor:finitely-generated-plus}
implies that $H_*(X_{\zeta};\ZZ)$ is a finitely generated $\ZZ$-module.

Let $\kappa$ be the composition
$c_{Y'}\circ \pi_{Y\sharp Y'}:X=(Y\sharp Y')\times Z\to Y\sharp Y'\to Y$
where $\pi_{Y\sharp Y'}$ is the projection to the first factor.
Define $\psi_Y=\pi_j\circ\phi_Y$. Then $\zeta=\psi_Y\circ\kappa$,
so $X_{\zeta}$ can be identified with $(Y_{\psi_Y}\sharp(Y'\times\ZZ))\times Z$,
the product of $Z$ with the connected sum of $Y_{\psi_Y}$
with countably many copies of $Y'$. Since $H_*(Y';\ZZ)\not\simeq H^*(S^n;\ZZ)$,
making connected sum with $Y'$ increases the size of the homology,
so $H_*(X_{\zeta};\ZZ)$ is not finitely generated over $\ZZ$, contradicting our
previous conclusion. This finishes the proof that $\discsym(X)\leq k$.

\section{Proof of Theorem \ref{thm:main-smooth}}
\label{s:proof-thm:main-smooth}

In dimensions up to three any topological manifold has a unique smooth structure, so
we assume that $n\geq 5$.
According to \cite[\S 15A]{Wall}, for any smooth $n$-manifold $X$ and any simple
homotopy equivalence $h:X\to T^n$ one can define a "characteristic class"
$$c(h:X\to T^n)\in A_n:=H^3(T^n;\ZZ/2)\oplus \bigoplus_{i\leq n}H^i(T^n;\pi_i(PL/O))$$
with the property that if $h':X'\to T^n$ is another simple homotopy equivalence
and $c(h:X\to T^n)=c(h':X'\to T^n)$ then $X$ and $X'$ are diffeomorphic.
The piece of the characteristic class in $H^3(T^n;\ZZ/2)$ accounts for the PL
structure of $X$, whereas that in $H^i(T^n;\pi_i(PL/O))$ accounts for the different
choices of smooth structure compatible with the given PL structure.
%The identity map $\Id:T^n\to T^n$ has trivial characteristic class, so
If $c(h:X\to T^n)=0$ then $X$ is diffeomorphic to the standard torus, and
if $\pi:T^n\to T^n$ is a covering and $\pi^*h:\pi^*X\to T^n$ is the pullback of $h$
then
$$c(\pi^*h:\pi^*X\to T^n)=\pi^*c(h:X\to T^n)$$
(note that $\pi^*h:\pi^*X\to T^n$ is also a simple homotopy equivalence).
The homotopy groups $\pi_i(PL/O)$ are finite and hence so is the group $A_n$.

Let $X$ be a smooth $n$-manifold and suppose that $h:X\to T^n$ is a homeomorphism.
Then $h$ is a simple homotopy equivalence by Chapman's theorem (see e.g. the Appendix
in \cite{Cohen}), so we have a
characteristic class $c(h:X\to T^n)\in A_n$. Let $k$ be any natural number and let
$r=k|A_n|+1$. Multiplication by $r$ is the identity map on $A_n$, so if
$\pi_r:T^n\to T^n$ is the covering space defined by $\pi_r(x_1,\dots,x_n)=(rx_1,\dots,rx_n)$
(where $x_i\in\RR/\ZZ$) then $\pi_r^*c(h:X\to T^n)=c(\pi_r^*h:\pi_r^*X\to T^n)=c(h:X\to T^n)$.
Hence there exists a diffeomorphism
$\phi_r:X\to\pi_r^*X$. The manifold $\pi_r^*X$ has a free and smooth
action of $(\ZZ/r)^n$ given by deck transformations of the covering $\pi_r^*X\to X$. This action can be transported
via $\phi_r$ to a free action of $(\ZZ/r)^n$ on $X$. This proves statement (1) of Theorem \ref{thm:main-smooth}.

Let us now prove (2). Let $X$ be a smooth manifold homeomorphic to $T^n$, and fix a homotopy equivalence $h:X\to T^n$.
By Theorems \ref{thm:Minkowski} and \ref{thm:linearising-action}
there exists a natural number $C$ such that for any action of a
finite group $\Gamma$ on
$X$ there is a subgroup $\Gamma_0\leq\Gamma$ satisfying $[\Gamma:\Gamma_0]\leq C$, a map $\psi:X\to T^n$
homotopic to $h$, and a monomorphism $\eta:\Gamma_0\to T^n$ such that $\psi$ is
$\eta$-equivariant.
In particular, the action of $\Gamma_0$ on $X$ is free.

%We next define $\delta(n)$.
%By Lemma \ref{lemma:subgroup-isomorphic-to-Gamma-a-b} there exists a constant $C'$ such that,
%if $\Gamma'$ is any subgroup of $\Gamma_{r,n}$ satisfying $[\Gamma_{r,n}:\Gamma']\leq C$,
%then $\Gamma'$ contains a subgroup isomorphic to $\Gamma_{s,n}$, where $C's\geq r$.
Suppose that $|A_n|=p_1^{e_1}\dots p_k^{e_k}$, where $p_1,\dots,p_k$ are pairwise distinct prime
numbers and each $e_i$ is a natural number. Let $f_i$ be the smallest natural number such that
$p_i^{f_i}\geq C!$. %for every $i$, and define
Define $\delta(n)=p_1^{e_1+f_1}\dots p_k^{e_k+f_k}.$

Let $r$ be an integer multiple of $\delta(n)$ and suppose that $\Gamma'$ is a subgroup of
$\Gamma_{r,n}$ satisfying $[\Gamma_{r,n}:\Gamma']\leq C$.
By Lemma \ref{lemma:subgroup-isomorphic-to-Gamma-a-b} there exists a subgroup $\Gamma''\leq\Gamma'$
isomorphic to $\Gamma_{s,n}$ for some $s$ dividing $r$ and satisfying $C!s\geq r$.
%We next prove that $s$ is divisible by $p_i^{e_i}$ for every $i$.
Let $p_i^{g_i}$ (resp. $p_i^{h_i}$) be the biggest power of $p_i$ dividing $s$ (resp. $r$).
We have $h_i\geq e_i+f_i$ because $r$ is divisible by $\delta(n)$,
and $h_i\geq g_i$ because $s$ divides $r$. Since $r/s\leq C!\leq p_i^{f_i}$
and $p_i^{h_i-g_i}$ divides $r/s$,
we have $h_i-g_i\leq f_i$, so $g_i\geq h_i-f_i\geq e_i$. Hence $s$ is divisible by $|A_n|$.

If the group $\Gamma_{r,n}$ acts smoothly and effectively on $X$ then there is a monomorphism $\eta:\Gamma_{s,n}\to T^n$
and an $\eta$-equivariant map $\psi:X\to T^n$ homotopic to $h$.
The quotient $T^n\to T^n/\eta(\Gamma_{s,n})$ is a covering map and $T^n/\eta(\Gamma_{s,n})$
is homeomorphic to $T^n$. So the map $\psi$ descends to a continuous map $\zeta:X/\Gamma_{s,n}\to T^n/\eta(\Gamma_{s,n})\cong T^n$.
The projection map identifies $\pi_1(X)$ (resp. $\pi_1(T^n)$) with a subgroup of $\pi_1(X/\Gamma_{s,n})$
(resp. $\pi_1(T^n/\eta(\Gamma_{s,n})$), and via these identifications
$\zeta_*:\pi_1(X/\Gamma_{s,n})\to \pi_1(T^n/\eta(\Gamma_{s,n}))$ extends
$\psi_*:\pi_1(X)\to\pi_1(T^n)$. Since $\psi_*$ is an isomorphism,
$[\pi_1(X/\Gamma_{s,n}):\pi_1(X)]=|\Gamma_{s,n}|=[\pi_1(T^n/\eta(\Gamma_{s,n})):\pi_1(T^n)]$,
and $\pi_1(X/\Gamma_{s,n})\simeq \pi_1(T^n/\eta(\Gamma_{s,n}))\simeq\ZZ^n$,
it follows that $\zeta_*$ is an isomorphism. Both $X/\Gamma_{s,n}$ and $T^n/\eta(\Gamma_{s,n})$ are aspherical
spaces, so $\zeta$ is a homotopy equivalence.
By the topological rigidity
of tori, $\zeta$ is homotopic to a homeomorphism $\xi:X/\Gamma_{s,n}\to T^n$. Since $\xi$ is homotopic to
$\zeta$, it can be lifted to a homeomorphism $\theta:X\to T^n$ that makes the following diagram commutative:
$$\xymatrix{X\ar[r]^{\theta}\ar[d] & T^n \ar[d]^q \\ X/\Gamma_{s,n}\ar[r]^-{\xi} & T^n.}$$
The map $q$ is given by $q(x_1,\dots,x_n)=(sx_1,\dots,sx_n)$,
so its action on $H^k(T^n;\ZZ)$ is multiplication by $s^k$.
Since $s$ is divisible by $|A_n|$, the universal coefficients theorem implies that
$c(\theta:X\to T^n)=q^* c(\xi:X/\Gamma_{s,n}\to T^n)=0$, so $X$ is diffeomorphic to $T^n$.

\section{Holomorphic finite group actions on Kaehler manifolds}
\label{s:holomorphic}

\subsection{Proof of Theorem \ref{thm:Lie-groups}}
The implication (1)$\Rightarrow$(2) is immediate, as the $r$-torsion of $T^n$
is isomorphic to $(\ZZ/r)^n$. To prove the converse implication
(2)$\Rightarrow$(1) it is enough to consider the case in which
$G$ is compact. Indeed, if $G$ is an arbitrary Lie group with finitely many connected components
then the existence and uniqueness up to conjugation of maximal compact subgroups
(see e.g. \cite[Theorem 14.13]{HN}) implies the existence of a compact subgroup $K\leq G$ with the property
that any compact (in particular, any finite) subgroup of $G$ is conjugate to a subgroup
of $K$. Hence, replacing $G$ by $K$ we assume from now on that $G$ is a compact Lie group.

The proof can be finished with elementary arguments using the exponential map
and the adjoint representation. Instead, we give a short but overkill argument. Let $T\leq G$
be a maximal torus. Then $G/T$ has a natural structure of smooth manifold and $\chi(G/T)$ is
nonzero (see e.g. \cite[Prop. 17.4]{Bump}). By \cite[Theorem 2.5]{Mundet2022} there exists some
constant $C$ such that any finite group $\Gamma$ acting continuously on $G/T$ has a subgroup
$\Gamma'\leq\Gamma$ satisfying $[\Gamma:\Gamma']\leq C$ and fixing some point in $G/T$.
Now suppose that, for some integer $a\geq C!+1$, $G$ has a subgroup $\Gamma$ isomorphic to
$\Gamma_{a,n}$. Consider the action of $\Gamma$ on the left on $G/T$. There is a subgroup
$\Gamma'\leq\Gamma$ fixing a point $gT\in G/T$, so $g\Gamma'g^{-1}$
is contained in $T$. By Lemma \ref{lemma:subgroup-isomorphic-to-Gamma-a-b}
there is a subgroup of $\Gamma'$ isomorphic to $\Gamma_{a',n}$ for some $a'$ satisfying
$C!a'\geq a$, hence $a'\geq 2$. By Lemma \ref{lemma:upper-bound-subgroup-T-d} it follows that
$\dim T\geq n$.

\subsection{Proof of Theorem \ref{thm:Kaehler}}
\label{ss:proof-thm:Kaehler}
Let $X$ be a compact connected Kaehler manifold of real dimension $n$. Let $\Aut X$ denote the group of
biholomorphisms of $X$. A theorem of Bochner and Montgomery \cite[\S 9]{BM} states that $\Aut X$ has
a natural structure of Lie group. Let $\omega$ denote the Kaehler form of $X$ and let
$\ov{\omega}$ denote the cohomology class in $H^2(X;\RR)$ represented by $\omega$ through the
de Rham isomorphism. Let $\Aut_{\ov{\omega}}X$ denote the subgroup of $\Aut X$ consisting of
those biholomorphisms fixing the class $\ov{\omega}$. According to a theorem of Fujiki \cite[Theorem 4.8]{Fuj},
$\Aut_{\ov{\omega}}X$ has finitely many connected components.

By Theorem \ref{thm:Minkowski} there exists a constant $C$ depending only on $X$
such that for any action of a finite group  $\Gamma$ on $X$ then there is a subgroup
$\Gamma'\leq\Gamma$ satisfying $[\Gamma:\Gamma']\leq C$ and whose action on $H^2(X;\ZZ)$,
and hence on $H^2(X;\RR)$, is trivial.
In particular, if $\Gamma$
acts effectively on $X$ by biholomorhic transformations, so that we can identify $\Gamma$ with a subgroup
of $\Aut X$, then $[\Gamma:\Aut_{\ov{\omega}}X\cap\Gamma]\leq C$.

Now assume that for some natural number $m$ the group $\Aut X$ contains subgroups isomorphic
to $\Gamma_{r,m}$ for arbitrary large values of $r$. Arguing as in the preceding subsection
(applying the existence of the constant $C$ and using Lemma \ref{lemma:subgroup-isomorphic-to-Gamma-a-b})
we may conclude that $\Aut_{\ov{\omega}}X$ contains subgroups isomorphic to $\Gamma_{s,m}$ for
arbitrary large values of $s$. This implies, by Theorem \ref{thm:Lie-groups}, that
$\Aut_{\ov{\omega}}X$ contains a torus $T$ of satisfying $\dim T\geq m$.

By the principal orbit theorem (see e.g. \cite[Theorem (5.14)]{tD}),
if an $m$-dimensional torus $T$ acts effectively on an $n$-dimensional
connected topological manifold $X$ then $m\leq n$, and if $m=n$ then $X$
is homeomorphic to a torus.
%
%Since we did not find a proof in the literature, we sketch it here.
%For any nontrivial closed subgroup $K\leq T$ the fixed point set $X^K$ is closed and has empty %interior
%(see \cite[Chap III, Theorem 9.5]{Bredon}). The set of closed subgroups of $T$ is countable
%and topological manifolds are Baire spaces, so $\bigcap_{\{e\}\neq K\leq T}X\setminus %X^K\neq\emptyset$.
%Hence, there exists a point $x\in X$ whose stabilizer is trivial, so
%the map $f:T\ni t\mapsto t\cdot x\to X$ is continuous and injective. By the theorem of invariance %of domain
%it follows that $\dim T\leq\dim X$ and that, if $\dim T=\dim X$, then $f$ is an open map. In that
%case, $f(T)$ is closed (because $T$ is compact) and open in $X$ and since $X$ is connected it %follows that $f(T)=X$. Since
%$f$ is open and injective, it gives a homeomorphism $T\cong f(T)$, so we conclude that $T\cong X$.
%
A Kaehler manifold homeomorphic to a torus is biholomorphic to a complex torus (see e.g.
\cite[Theorem B]{BC} for a nice exposition of a more general result), so the proof of Theorem \ref{thm:Kaehler}
is now finished.

\section{Regular self coverings of the manifolds in Theorem \ref{thm:CWY}}
\label{s:CWY}

Let $d$ be an odd natural number.
The manifolds constructed in \cite{CWY} are products $T(h)\times H$, where $T(h)$ is the mapping
torus of a self homeomorphism $h$ of a closed topological manifold $V$ and $H$ is a closed hyperbolic manifold.
So it suffices to prove that $T(h)$ supports a regular self covering of degree $d$. The structure of
mapping torus on $T(h)$ gives a map $T(h)\to S^1$, and the regular self covering we claim to exist
is the pullback, via this map,
of the covering $S^1\to S^1$ sending $\theta$ to $d\,\theta$. This pullback can be identified with the
mapping torus $T(h^d)$, so all we need to prove is that $T(h)$ and $T(h^d)$ are homeomorphic.

The manifold $V$ is $W\cup_T (T^n\times[0,1])\cup_{T'}W'$, where $W$ and $W'$ are $(n+1)$-dimensional manifolds with boundaries
$T$ and $T'$ respectively, and where $n\geq 5$.
Here both $T$ and $T'$ denote the torus $T^n$ with involutions $h_T:T\to T$ and
$h_{T'}:T'\to T'$. The involution $h_T$ is a linear involution, whereas $h_{T'}$ is exotic, i.e., not conjugate to a linear action. Both $h_T$ and $h_{T'}$ have nonempty
fixed point set (see \cite[\S 2.1]{BW} for a concrete description of the involutions $h_T$ and $h_{T'}$ used in
\cite{CWY}). The maps $h_T$ and $h_{T'}$ are homotopic.
In the definition of $V$ we glue $T\subset W$ with $T^n\times\{0\}$ and $T'\subset W'$ with
$T^n\times\{1\}$.

The involutions $h_T$ and $h_{T'}$ extend to involutions $h_W:W\to W$
and $h_{W'}:W'\to W'$ respectively, and there is a self homeomorphism $h_C$ of $C:=T^n\times [0,1]$
whose restriction to $C_0:=T^n\times\{0\}$ resp. $C_1:=T^n\times\{1\}$ coincides with $h_T$ resp. $h_{T'}$.
To justify the existence of $h_C$ it suffices to prove the existence of a homeomorphism
$\phi:C\to C$ such that $\phi|_{C_0}=\Id_{T^n}$ and $\phi|_{C_1}=\psi:=h_T^{-1}\circ h_{T'}$, for then
$h_C:=(h_T\times\Id_{[0,1]})\circ \phi$ has the desired property. Now, $\psi$ is homotopic to $\Id_{T^n}$
so the mapping tori $T(\psi)$ and $T(\Id_{T^n})$ are homotopy equivalent. Since $T(\Id_{T^n})=T^{n+1}$,
the topological rigidity of tori implies that $T(\psi)$ and $T(\Id_{T^n})$ are homeomorphic.
The existence of $\phi$ now follows from combining: \cite[Theorem 1]{Lawson}, the observation that invertible
cobordism are $h$-cobordisms, the $s$-cobordism theorem, and the vanishing of the Whitehead group of
$\pi_1(T^n)$.

Unlike $h_T$ and $h_{T'}$, $h_C$ is not an involution. However, we have the following:

\begin{prop}
\label{prop:homotopy-rel-bdry}
If we chose $h_C$ suitably, then
$h_C^2$ and $\Id_C$ are homotopic rel. $\partial C$.
\end{prop}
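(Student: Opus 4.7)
The plan is to exploit the freedom in the choice of $h_C$ to arrange that, in a suitable lift to the universal cover $\wt{C}=\RR^n\times[0,1]\to C$, both boundary restrictions of $\wt{h_C}$ are honest involutions of $\RR^n$. Once this is achieved, $\wt{h_C}^{\,2}|_{\partial \wt{C}}=\Id$, and a straight-line homotopy in $\RR^n$ descends equivariantly (thanks to $A^2=I$, where $A\in\GL(n,\ZZ)$ is the common action of $h_T$ and $h_{T'}$ on $H_1(T^n;\ZZ)$) to a homotopy rel $\partial C$ between $h_C^{\,2}$ and $\Id_C$.

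\textbf{Construction of the suitable $h_C$.}
Since $h_T$ and $h_{T'}$ both have fixed points, pick $p\in T^n$ with $h_T(p)=p$ and $p'\in T^n$ with $h_{T'}(p')=p'$, and lifts $\wt{p},\wt{p'}\in\RR^n$. Let $\wt{h_T}$ be the unique lift of $h_T$ fixing $\wt{p}$ and $\wt{h}_{T'}^{\star}$ the unique lift of $h_{T'}$ fixing $\wt{p'}$; each is an involution, since its square is a deck transformation with a fixed point. Starting from the $h_C$ built previously, take the lift $\wt{h_C}$ whose restriction to $\wt{C}_0$ is $\wt{h_T}$; its restriction to $\wt{C}_1$ then has the form $\wt{h}_{T'}^{\star}+v_0$ for a unique $v_0\in\ZZ^n$. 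Define $g:C\to C$ by $g(x,t)=(x-\mu(t)v_0,t)$ for a continuous $\mu:[0,1]\to\RR$ with $\mu(0)=0$ and $\mu(1)=1$; since $v_0\in\ZZ^n$, $g$ is a self-homeomorphism with $g|_{\partial C}=\Id$, so replacing $h_C$ by $h_C\circ g$ preserves the boundary data. The lift $\wt{g}$ with $\wt{g}|_{\wt{C}_0}=\Id$ satisfies $\wt{g}|_{\wt{C}_1}=\tau_{-v_0}$, and using the equivariance $\wt{h}_{T'}^{\star}(y+w)=\wt{h}_{T'}^{\star}(y)+Aw$ for $w\in\ZZ^n$, one checks that the new restriction of $\wt{h_C}$ to $\wt{C}_1$ equals $\wt{h}_{T'}^{\star}+(I-A)v_0$; since $(A+I)(I-A)=I-A^2=0$, a direct calculation shows this new restriction is again an involution.

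\textbf{Conclusion.}
Both boundary restrictions of $\wt{h_C}$ are now involutions, so $\wt{h_C}^{\,2}|_{\partial \wt{C}}=\Id$. After isotoping $h_C$ rel $\partial C$ so that it preserves the $[0,1]$-coordinate, write $\wt{h_C}(y,t)=(\wt{H}(y,t),t)$. The relation $\wt{H}(y+w,t)=\wt{H}(y,t)+Aw$ combined with $A^2=I$ gives $\wt{H}^{\,2}(y+w,t)-(y+w)=\wt{H}^{\,2}(y,t)-y$, so the function $D(y,t)=\wt{H}^{\,2}(y,t)-y$ is $\ZZ^n$-invariant, descends to a continuous map $C\to\RR^n$, and vanishes on $\partial C$. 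The straight-line homotopy $(1-s)D$ composed with the projection $\RR^n\to T^n$ then furnishes a homotopy rel $\partial C$ from $h_C^{\,2}$ to $\Id_C$. The main obstacle is the cocycle bookkeeping that relates the shift by $\mu(t)v_0$ in $g$ to the change of the $\wt{C}_1$-restriction of $\wt{h_C}$, together with the repeated use of the identity $(A+I)(I-A)=0$; the ancillary straightening of the $[0,1]$-coordinate is routine because $h_C$ already preserves the boundary components.
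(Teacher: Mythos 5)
Your argument is correct and follows essentially the same route as the paper: you use the fixed points of $h_T$ and $h_{T'}$ to get involutive lifts of the boundary maps, correct $h_C$ by the integral shear $g(x,t)=(x-\mu(t)v_0,t)$ so that a single lift of the new $h_C$ to $\RR^n\times[0,1]$ restricts to involutions on both boundary planes (your computation $(A+I)(I-A)v_0=0$ is the same mechanism as the paper's $\rho(\lambda)+\lambda$ and its shear correction $\xi^{\sharp}(p,s)=\zeta^{\sharp}(p,s)-(s\lambda,0)$), and then descend an equivariant straight-line homotopy, which is exactly the content of the paper's lemma on the obstruction $\lambda(g,f)$. The only blemish is the step \emph{isotoping $h_C$ rel $\partial C$ to a level-preserving homeomorphism}, which as an isotopy claim is not routine; it is also unnecessary, since either a homotopy rel $\partial C$ obtained by linearly interpolating the $[0,1]$-coordinate suffices for your $D$-argument, or one can drop the normalization entirely and take the straight-line homotopy from $\wt{h_C}^{\,2}$ to the identity inside the convex set $\RR^n\times[0,1]$, which is $\ZZ^n$-equivariant because $A^2=I$ and is constant on $\RR^n\times\{0,1\}$, hence descends to a homotopy rel $\partial C$.
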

\begin{proof}
We identify $T^n$ with $\RR^n/\ZZ^n$, so the universal covering space $C^{\sharp}$ of $C$ can be
identified with $\RR^n\times [0,1]$. Let $C^{\sharp}_i=\RR^n\times\{i\}$ for $i=0,1$.
Let $f,g:C\to C$ be continuous maps such that
$f|_{\partial C}=g|_{\partial C}$. Choose lifts $f^{\sharp},g^{\sharp}:C^{\sharp}\to C^{\sharp}$
of $f,g$ respectively. Then
$g^{\sharp}|_{C^{\sharp}_i}-f^{\sharp}|_{C^{\sharp}_i}$ is equal to some constant $\lambda_i\in\ZZ^n$, because
$g|_{C_i}=f|_{C_i}$. Let
$$\lambda(g,f):=\lambda_1-\lambda_0.$$
The vector $\lambda(g,f)\in\ZZ^n$ is independent of the chosen lifts of $f,g$.

\begin{lemma}
$f$ and $g$ are homotopic rel. $\partial C$ if and only if $\lambda(g,f)=0$.
\end{lemma}
\begin{proof}
The "only if" part of the lemma is an easy exercise. For the "if" part, note that
there is a linear map $\rho:\ZZ^n\to\ZZ^n$ such that
for every $p\in\RR^n$, $\mu\in\ZZ^n$ and $s\in[0,1]$,
both $f^{\sharp}(p+\mu,s)-f^{\sharp}(p,s)$ and
$g^{\sharp}(p+\mu,s)-g^{\sharp}(p,s)$ are equal to $(\rho(\mu),0)$.
%
%$f^{\sharp}(p+\mu,s)=f^{\sharp}(p,s)+(\rho(\mu),0)$
%and $g^{\sharp}(p+\mu,s)=g^{\sharp}(p,s)+(\rho(\mu),0)$ for every %$p\in\RR^n,\,\mu\in\ZZ^n,\,s\in[0,1]$.
%
(Actually $\rho$ can be identified with the morphism $H_1(T^n)\to H_1(T^n)$ induced
by $f$ or $g$.) It follows that the map $C^{\sharp}\times [0,1]\to C^{\sharp}$
sending $((p,s),t)$ to $(1-t)f^{\sharp}(p,s)+tg^{\sharp}(p,s)$,
which is a homotopy between $f^{\sharp}$ and $g^{\sharp}$,
descends to a homotopy rel $\partial C$ between $f$ and $g$.
\end{proof}

Now suppose that $\zeta:C\to C$ is a homeomorphism
satisfying $\zeta|_{C_0}=h_T$ and $\zeta|_{C_1}=h_{T'}$.
Since both $h_T$ and $h_{T'}$ have fixed points, there exist $x\in C_0$ and $y\in C_1$
such that $\zeta(x)=x$ and $\zeta(y)=y$. Choose lifts $x^{\sharp},y^{\sharp}\in C^{\sharp}$ of $x,y$ respectively.
There is a unique lift $\zeta^{\sharp}:C^{\sharp}\to C^{\sharp}$ satisfying $\zeta^{\sharp}(x^{\sharp})=x^{\sharp}$.
As before, there is a morphism of groups $\rho:\ZZ^n\to\ZZ^n$ such that
$\zeta^{\sharp}(p+\mu,s)=\zeta^{\sharp}(p,s)+(\rho(\mu),0)$ for all $p,\mu,s$.
Let $o:C^{\sharp}=\RR^n\times [0,1]\to\RR^n$ denote the projection and let
$\nu:=o(\zeta^{\sharp}(y^{\sharp})-y^{\sharp})\in\ZZ^n$, so that $\zeta^{\sharp}(y^{\sharp})=y^{\sharp}+(\nu,0)$. Then:
\begin{align*}
\lambda(\zeta^2,\Id) &= o(\zeta^{\sharp}\zeta^{\sharp}(y^{\sharp})-y^{\sharp})
= o(\zeta^{\sharp}\zeta^{\sharp}(y^{\sharp})-\zeta^{\sharp}(y^{\sharp})+\zeta^{\sharp}(y^{\sharp})-y^{\sharp}) \\
&=o(\zeta^{\sharp}(y^{\sharp}+(\nu,0))-\zeta^{\sharp}(y^{\sharp})+(\nu,0))
=\rho(\nu)+\nu.
\end{align*}
By the previous lemma, in order for $\zeta^2$ to be homotopic to $\Id_C$ rel. $\partial C$
we need $\rho(\nu)+\nu$ to vanish. This need not be the case, but if we define
$\xi^{\sharp}:C^{\sharp}\to C^{\sharp}$
as $\xi^{\sharp}(p,s)=\zeta^{\sharp}(p)-s\nu$ then we have
$\xi^{\sharp}(p+\mu,s)=\xi^{\sharp}(p,s)+(\rho(\mu),0)$ for all $p,\mu,s$,
so $\xi^{\sharp}$ descends to a homeomorphism $\xi:C\to C$ satisfying $\xi|_{\partial C}=\zeta|_{\partial C}$.
Furthermore, $\xi^{\sharp}(x^{\sharp})=x^{\sharp}$ and $\xi^{\sharp}(y^{\sharp})=y^{\sharp}$, so
$\lambda(\xi^2,\Id)=0$. Consequently, $h_C:=\xi$ has the desired property.
\end{proof}

Assume from now on that $h_C$ has been chosen in such a way that $h_C^2$ is homotopic to $\Id_C$
rel. $\partial C$, which implies that $h_C^d$ and $h_C$ are homotopic rel. $\partial C$.
The involution $h:V\to V$ is defined by the condition that its restriction to the subspaces
$W,\,T^n\times [0,1],\,W'$ is given by $h_W,\,h_C,\,h_{W'}$ respectively.

Since $h_W$ and $h_{W'}$ are involutions, the restrictions of the maps $h$ and $h^d$ to $W$ and $W'$ are equal.
Hence, to prove that $T(h)$ and $T(h^d)$ are homeomorphic
it suffices to prove the existence of a homeomorphism of mapping tori $\phi:T(h_C)\to T(h_C^d)$ whose restriction
to $\partial T(h_C)$ is the natural homeomorphism $\partial T(h_C)\to\partial T(h_C^d)$ resulting from
the equalities $h_T=h_T^d$ and $h_{T'}=h_{T'}^d$.
%We are going to prove the existence of $\phi$ using
%the topological rigidity of non-positively curved Riemannian manifolds proved in %\cite{Farrell,FJ}.
%As in \cite{CWY} we implicitly assume that $n\geq 5$, which is required for the results in \cite{Farrell,FJ} to %apply.

By definition $T(h_C)$ is the quotient of $C\times [0,1]$ by the relation that
identifies $(x,1)$ with $(h_C(x),0)$ for every $x\in C$.
For $i=0,1$, let $T_i(h_C)\subset T(h_C)$ be the image of
$C_i\times [0,1]$ under the projection map $C\times [0,1]\to T(h_C)$.
Then $T_0(h_C)$ resp. $T_1(h_C)$ can be identified with $T(h_T)$ resp. $T(h_{T'})$.
Since $h_T$ is a linear involution, $T(h_T)$ supports a non-positively curved Riemannian metric.
Now, $T(h_{T'})$ is homotopy equivalent to $T(h_T)$, because $h_T$ and $h_{T'}$ are homotopic.
Hence, by topological rigidity \cite[Theorem 14.1]{Farrell}, there is a homeomorphism
$\psi:T(h_{T'})\to T(h_T)$. Choosing $\psi$ appropriately, we may and do assume that the compositions
of maps
$T(h_T)\stackrel{\psi^{-1}}{\longrightarrow}T(h_{T'})=T_1(h_C)\hookrightarrow T(h_C)$ and
$T(h_T)=T_0(h_C)\hookrightarrow T(h_C)$ are homotopic.

We claim the existence of a homeomorphism
$$\xi:T(h_C)\to T(h_T\times\Id_{[0,1]})=T(h_T)\times [0,1]$$
whose restriction
to $T_0(h_C)$ resp. $T_1(h_C)$ coincides with $\Id_{T(h_T)}$ resp. $\psi$. Once we prove the claim
the existence of $\phi$ will follow immediately, since by \cite[Theorem 14.1]{Farrell} topological
rigidity applies to $T(h_T)\times [0,1]$ (again because $T(h_T)$ supports a metric
of non-positive curvature) and the existence of a homotopy $h_C^d\sim h_C$ rel. $\partial C$
gives a homotopy equivalence $T(h_C)\to T(h_C^d)$ whose restriction to $\partial T(h_C)$ is a
homeomorphism.

To prove the existence of the homeomorphism $\xi:T(h_C)\to T(h_T)\times [0,1]$
we rely once again on the topological rigidity of $T(h_T)\times [0,1]$, so we only
need to prove the existence of a continuous map $\chi:T(h_C)\to T(h_T)\times [0,1]$
whose restriction to $T_0(h_C)$ resp. $T_1(h_C)$ coincides with $\Id_{T(h_T)}$ resp. $\psi$
(these properties imply that $\chi$ is a homotopy equivalence).

If $Y\subseteq X$ is an inclusion of
topological spaces and $f,g:X\to X$ are maps preserving $Y$, satisfying $f|_Y=g|_Y$,
and $f,g$ are homotopic rel. $Y$, then there is a continuous map
$\epsilon:T(f)\to T(g)$ whose restriction to $T(f|_Y)$ is the natural identification
between $T(f|_Y)$ and $T(g|_Y)$. Indeed, suppose that
$H:X\times I\to X$ satisfies $H(x,0)=g(x)$,
$H(x,1)=f(x)$ and $H(y,t)=f(y)=g(y)$ for every $x\in X$, $y\in Y$, $t\in [0,1]$. Then
$\epsilon$ is defined by the map $\wt{\epsilon}:X\times [0,1]\to X\times [0,1]$ given by
$$\wt{\epsilon}(x,t)=\left\{\begin{array}{ll}
(x,2t) & \qquad \text{if $t\in[0,1/2],$} \\
(H(x,2t-1),0) & \qquad \text{if $t\in [1/2,1]$.}\end{array}\right.$$

Using the previous
principle, and the facts that $h_C$ and $h_T\times \Id_{[0,1]}$ are homotopic rel. $C_0$ and
$h_C$ and $h_{T'}\times \Id_{[0,1]}$ are homotopic rel. $C_1$
(which can be proved by lifting the maps to $C^{\sharp}$ as in the proof of Proposition
\ref{prop:homotopy-rel-bdry} and interpolating linearly),
we deduce the existence of maps $\chi_0:T(h_C)\to T(h_T)\times [0,1]$
and $\chi_1:T(h_C)\to T(h_{T'})\times [0,1]$
such that $\chi_i$ restricted to $T_i(h_C)$ is the identity for $i=0,1$.
Furthermore, $\chi_0$ and $(\psi\times\Id_{[0,1]})\circ\chi_1$ are homotopic.

The universal cover of $T(h_C)$
can be identified with $\RR^n\times [0,1]\times \RR$, and that of
$T(h_T)\times [0,1]$ with $\RR^n\times\RR\times [0,1]$.
Fix a lift $h_T^{\sharp}:\RR^n\to\RR^n$ of $h_T:T^n\to T^n$.
Crucially, $h_T^{\sharp}$ is an affine isomorphism, because $h_T$ is a linear involution.
The group $\Gamma=\pi_1(T(h_T)\times [0,1])\simeq\pi_1(T(h_T))\simeq \ZZ^n\rtimes \ZZ$
acts on $\RR^n\times\RR\times [0,1]$ preserving the affine structure induced
by the inclusion $\RR^n\times\RR\times [0,1]\subset\RR^{n+2}$: the factor $\ZZ^n$
acts by addition on the first factor of $\RR^n\times\RR\times [0,1]$, and the action
of the second factor is generated by the transformation $(z,t,s)\mapsto (h_T^{\sharp}(z),t-1,s)$.

Choose lifts of $\chi_0$ and $(\psi\times\Id_{[0,1]})\circ\chi_1$
to the universal coverings, and call them $\theta_0$ and $\theta_1$ respectively,
so that
$\theta_i:\RR^n\times [0,1]\times \RR\to \RR^n\times \RR\times [0,1].$
Since $\theta_0$ and $\theta_1$ are homotopic there exists a morphism of groups
$\rho:\pi_1(T(h_C))\to \pi_1(T(h_T)\times [0,1])$ such that both $\theta_0$
and $\theta_1$ are $\rho$-equivariant, meaning that
$\theta_i(\gamma\cdot w)=\rho(\gamma)\cdot \theta_i(w)$ for every
$\gamma\in\pi_1(T(h_C))$ and $w\in\RR^n\times [0,1]\times \RR$.
Define
$\theta:\RR^n\times [0,1]\times \RR\to \RR^n\times \RR\times [0,1]$
as
$\theta(p,s,t)=(1-s)\theta_0(p,s,t)+s\theta_1(p,s,t)$.
Then $\theta$ satisfies the same $\rho$-equivariance property
as $\theta_i$, because the action of $\pi_1(T(h_T)\times [0,1])$ on $\RR^n\times \RR\times [0,1]$
is affine. This implies that $\theta$ descends to a map $\chi:T(h_C)\to T(h_T)\times [0,1]$,
and by construction the restriction of $\chi$ to $T_0(h_C)$ resp. $T_1(h_C)$ coincides
with $\Id_{T(h_T)}$ resp. $\psi$. This finishes the proof of the theorem.

%\vspace*{1cm}
%
%\noindent{\large \bf Declarations}
%
%\vspace*{0,25cm}
%
%\noindent{\bf Conflict of interest.} The corresponding author states that there is no conflict of
%interest.
%
%\vspace*{0,15cm}
%
%\noindent{\bf Data availability.} Data sharing not applicable to this article as no datasets were generated or analysed during
%the current study.
%
%\vspace*{0,15cm}
%
%\noindent{\bf Code availability.} Not applicable.

\end{document}